\documentclass[12pt,a4paper]{article}
\usepackage{amsfonts}
\usepackage{mathrsfs}
\usepackage{amsmath,amssymb,amsthm}
\usepackage{hyperref}
\usepackage{cleveref}
\usepackage{tikz}
\usepackage{setspace}
\usetikzlibrary{positioning, shapes.geometric}
\usepackage{geometry}
\newtheorem{theorem}{Theorem}[section] 
\newtheorem{definition}[theorem]{Definition} 
\newtheorem{lemma}[theorem]{Lemma}  
  
\newtheorem{claim}[theorem]{Claim} 
 
\newtheorem{conjecture}[theorem]{Conjecture}

\numberwithin{equation}{section}

\usepackage{titlesec}
\titleformat{\section}[block]
{\normalfont\fontsize{16}{19.2}\selectfont\bfseries}
{\thesection}
{1em}
{}

\title{\textbf{On the product of cross-intersecting families with maximal covering number}}
\author{
	Long Lin\textsuperscript{1} , Peter Frankl\textsuperscript{2} 
	 , Hehui Wu\textsuperscript{1}\\ \\
	\textsuperscript{1}Shanghai Center for Mathematical Sciences\\ Fudan University, Shanghai, China\\ \\
	\textsuperscript{2}R\'enyi Institute, Budapest, Hungary
}
\date{}

\begin{document}
	
	\maketitle
	
	\begin{abstract}
		For  integers $k  ,\ell \geq 2$ let $m(k,\ell)$ denote the maximum of $|\mathcal{F}| |\mathcal{G}|$ where the maximum is taken over all pairs of cross-intersecting families, $\mathcal{F}$ being a $k$-graph with covering number $\ell$ and $\mathcal{G}$ a $\ell$-graph with covering number $k$ (see the paper for the definitions). Erdos and Lovasz initiated the study of the one family version. That is, they provided lower and upper bounds on the maximal size $m(k)=|\mathcal{F}|$ where $\mathcal{F}$ is an intersecting k-graph with covering number $k$.
		In many similar situations  $m(k,k)=m(k)^2$ holds. However, as our results show $m(k,k)/m(k)^2$ is tending to infinity as $k$ grows(Th.1.5) . For $k>k_0$ we establish the exact value $m(k,k)=(k^{k-1}+k-1)^2$(Th.1.6). As to smaller values we prove $m(3,3)=121$ (Th.1.7)  and determine $m(2,k) $ for all $k\geq 2$ (Th.1.8).
	\end{abstract}

	\section{Introduction}
	
	Let $[n]=\{1,2,\ldots,n\}$ be the standard $n$-element set, $\binom{[n]}{k}$ the collection of its $k$-subsets. Subsets of $\binom{[n]}{k}$ are called $k$-graphs.
	
	\begin{definition}
		A $k$-graph $\mathcal{F}$ is called \emph{intersecting} if $F \cap F' \neq \varnothing$ holds for all $F, F' \in \mathcal{F}$. Similarly, if $\mathcal{F}$ is a $k$-graph and $\mathcal{G}$ is an $\ell$-graph then they are called \emph{cross-intersecting} (CI for short) if $F \cap G \neq \varnothing$ holds for all choices of $F \in \mathcal{F}$, $G \in \mathcal{G}$.
	\end{definition}

	Erdős, Ko and Rado \cite{EKR} proved that
	\begin{equation}
		|\mathcal{F}| \leq \binom{n-1}{k-1}
	\end{equation}
	holds if $\mathcal{F}$ is an intersecting $k$-graph and $n \geq 2k$.
	
	For $n > 2k$ the only family that achieves equality in (1.1) is the full star $\mathcal{S}_i := \{S \in \binom{[n]}{k} : i \in S\}$. Pyber \cite{P} proved a product version of (1.1) which was sharpened by Matsumoto et al.
	
	\begin{theorem}[\cite{P}, \cite{MT}]
		Let $k \geq \ell$ be positive integers, $n \geq 2k$. Suppose that $\mathcal{F} \subseteq \binom{[n]}{k}$ and $\mathcal{G} \subseteq \binom{[n]}{\ell}$ are cross-intersecting. Then
		\begin{equation}
			|\mathcal{F}||\mathcal{G}| \leq \binom{n-1}{k-1}\binom{n-1}{\ell-1}.
		\end{equation}
		Obviously (1.1) is a consequence of (1.2).
	\end{theorem}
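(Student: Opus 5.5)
We prove (1.2) by Katona's cyclic permutation method, in the product form, followed by double counting. Fix a cyclic ordering $\sigma$ of $[n]$, and call a set an \emph{arc} of $\sigma$ if its elements are consecutive in $\sigma$. Let $\mathcal{F}_\sigma$ be the $k$-arcs of $\sigma$ that lie in $\mathcal{F}$, and $\mathcal{G}_\sigma$ the $\ell$-arcs that lie in $\mathcal{G}$. There are $n$ arcs of each length. The core of the proof is the local inequality
\[
|\mathcal{F}_\sigma|\,|\mathcal{G}_\sigma| \le k\ell \qquad \text{whenever } \mathcal{F}_\sigma,\mathcal{G}_\sigma \text{ are cross-intersecting and } n\ge 2k.
\]

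\textbf{Step 1 (the local inequality).} Suppose first that $\mathcal{G}_\sigma$ is nonempty, and pick $G\in\mathcal{G}_\sigma$ with $G=\{a+1,\dots,a+\ell\}$ (indices mod $n$). Every $k$-arc meeting $G$ has its start point in an interval of length $k+\ell-1$, so a priori $|\mathcal{F}_\sigma|\le k+\ell-1$. This bound is too weak, so we need a finer argument. Parametrize $k$-arcs and $\ell$-arcs by their start points $x$ and $y$. Since $n\ge 2k\ge k+\ell$, the arc $[x,x+k-1]$ meets $[y,y+\ell-1]$ exactly when the cyclic difference $y-x$ lies in a window $W$ of $k+\ell-1$ consecutive residues, namely $\{-(\ell-1),\dots,k-1\}$; here $n\ge k+\ell$ prevents wraparound double-counting. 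Let $X$ be the set of start points of $\mathcal{F}_\sigma$ and $Y$ that of $\mathcal{G}_\sigma$. The condition becomes $Y-X\subseteq W$, where $W$ is an arc of length $k+\ell-1<n$ on the cycle $\mathbb{Z}_n$. We must show $|X||Y|\le k\ell$.

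To do so, observe that $Y-X$ contained in a proper arc forces $X$ and $Y$ to behave like subsets of $\mathbb{Z}$. Take $x_0\in X$; then $Y\subseteq x_0+W$, an interval, and symmetrically $X$ lies in an interval. Unrolling to integers, let $X$ span $[p,q]$ and $Y$ span $[r,s]$. Then $s-p\le k-1$ and $q-r\le \ell-1$ after suitable normalization. Hence $(q-p)+(s-r)\le k+\ell-2$, so
\[
|X|+|Y|\le (q-p+1)+(s-r+1)\le k+\ell .
\]
This alone does not give $|X||Y|\le k\ell$, since for example $|X|=|Y|=(k+\ell)/2$. So the sizes must be bounded separately. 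Since $n\ge 2k$, two $k$-arcs with start points at distance at least $k$ are disjoint; but this is not directly relevant because $\mathcal{F}_\sigma$ need not be intersecting. I expect this to be \textbf{the main obstacle}. The resolution is that the window constraint also gives $q-p\le (k+\ell-2)-(s-r)$. If $|X|\le k$ and $|Y|\le \ell$, we are done. Otherwise, say $|X|>k$; then $|Y|\le k+\ell-|X|<\ell$, and with $|X|=k+t$ we get $|X||Y|\le (k+t)(\ell-t)\le k\ell$ because $t(\ell-k-t)\le 0$ when $k\ge \ell$. In the other case, $|Y|=\ell+t$ gives $(k-t)(\ell+t)=k\ell+t(k-\ell-t)$, which can exceed $k\ell$. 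Here we use $n\ge 2k$: the span of $Y$ is at most $k+\ell-1-|X|$, and when $Y$ has more than $\ell$ start points the corresponding $\ell$-arcs cover more than ... This subcase needs care. The plan is to handle it by replacing the roles via the extremal configuration $X$ an interval of length $k$, $Y$ of length $\ell$, and by checking with an explicit optimization over interval lengths $a+b\le k+\ell$, $a\le n-?$. If this fails, we fall back on the cleaner route below.

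\textbf{Step 2 (fallback if the local inequality is hard for $k\neq\ell$).} First apply $(k,\ell)$-shifting to reduce to shifted families, which preserves cross-intersection and sizes. Then argue by induction on $n$ along the lines of Pyber: split $\mathcal{F}=\mathcal{F}(n)\cup\mathcal{F}(\bar n)$ and $\mathcal{G}$ similarly, and use the inequality $ab+cd\le\sqrt{(a+c)(b+d)}$-type AM-GM bounds on products of the pieces.

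\textbf{Step 3 (averaging).} Average over all $(n-1)!$ cyclic orders. Each $k$-set is an arc in exactly $k!(n-k)!$ of them, so $\mathbb{E}|\mathcal{F}_\sigma|=n|\mathcal{F}|/\binom nk$, and likewise $\mathbb{E}|\mathcal{G}_\sigma|=n|\mathcal{G}|/\binom{n}{\ell}$. But $\mathbb{E}[XY]\ne \mathbb{E}X\,\mathbb{E}Y$, so we must decouple. We do this by using two independent orders $\sigma,\tau$, which requires a local statement for arcs of different orders; that is false in general. So the decoupling genuinely forces the Step 2 route, with the local cyclic bound $|\mathcal{F}_\sigma|+\frac{k}{\ell}|\mathcal{G}_\sigma|\le 2k$ (weighted sum) averaged, plus AM-GM. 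Getting this weighted-sum local bound right is the crux. Combining then gives $|\mathcal{F}|/\binom{n-1}{k-1}+|\mathcal{G}|/\binom{n-1}{\ell-1}\le 2$, and AM-GM yields (1.2).
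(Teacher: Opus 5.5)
The paper gives no proof of this theorem (it is quoted from Pyber and Matsumoto--Tokushige), so your argument has to stand on its own. It does not: both inequalities it actually rests on are false.

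\emph{Step~1.} The local bound $|\mathcal{F}_\sigma||\mathcal{G}_\sigma|\le k\ell$ fails for every $n\ge 2k$ once $k\ge\ell+2$. Take $a=\lceil (k+\ell)/2\rceil$ and the start-point sets $X=\{0,\dots,a-1\}$, $Y=\{a-\ell,\dots,k-1\}$. Then $Y-X=W$, yet $|X||Y|=\lfloor (k+\ell)^2/4\rfloor>k\ell$. For instance, with $k=3$, $\ell=1$, the arcs $\{0,1,2\},\{1,2,3\}$ and $\{1\},\{2\}$ are cross-intersecting and give $4>3$. So $n\ge 2k$ cannot rescue your problematic subcase. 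Where the bound does hold (e.g.\ $k=\ell$), it still cannot be averaged, as you noticed.

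\emph{Step~3.} The target $|\mathcal{F}|/\binom{n-1}{k-1}+|\mathcal{G}|/\binom{n-1}{\ell-1}\le 2$ is false outright. Taking $\mathcal{F}=\emptyset$, $\mathcal{G}=\binom{[n]}{\ell}$ gives $n/\ell>2$ whenever $n>2\ell$. Even with both families nonempty, $\mathcal{F}=\{F_0\}$ together with all $\ell$-sets meeting $F_0$ gives $k+1/\binom{n-1}{k-1}>2$ for $\ell=1<k$, and $32/15$ for $k=\ell=3$, $n=7$. Correspondingly, the local bound $|\mathcal{F}_\sigma|+\frac{k}{\ell}|\mathcal{G}_\sigma|\le 2k$ already fails when $\mathcal{F}_\sigma=\emptyset$ and $\mathcal{G}_\sigma$ is all $n$ arcs.

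This is not a matter of choosing better weights. The normalized size vectors $(1,1)$ (two stars) and $(0,n/\ell)$ are both admissible. Moving from $(1,1)$ towards $(0,n/\ell)$, the product of the coordinates initially increases, since its derivative is $n/\ell-2>0$. Any system of linear inequalities valid for all cross-intersecting pairs cuts out a convex set containing this segment, so it cannot imply (1.2) when $n>2\ell$. Averaging a local linear bound over cyclic orders only ever produces such an inequality, so Katona's circle by itself cannot yield the product theorem.

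\emph{Step~2.} This is where a proof could actually live, but you only gesture at it. The displayed ``$ab+cd\le\sqrt{(a+c)(b+d)}$'' is also false, e.g.\ for $a=b=c=d=2$. Made precise, it does give a valid induction step. After simultaneous shifting and for $n\ge k+\ell$, the pairs $(\mathcal{F}(\overline{n}),\mathcal{G}(\overline{n}))$, $(\mathcal{F}(\overline{n}),\mathcal{G}(n))$, $(\mathcal{F}(n),\mathcal{G}(\overline{n}))$, $(\mathcal{F}(n),\mathcal{G}(n))$ on $[n-1]$ are cross-intersecting, the last one by shiftedness. When $n-1\ge 2k$, the four inductive bounds add up, by Pascal's rule, exactly to $\binom{n-1}{k-1}\binom{n-1}{\ell-1}$.

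But this only reduces the theorem to $n=2k$. Below $2k$ it can fail: for $k=3$, $\ell=2$, $n=5$, take $\mathcal{G}$ any five $2$-subsets of $[5]$ and $\mathcal{F}$ the complements of the other five; this gives $25>24$. Because the step mixes uniformities, you also need the base case $n=2k$ with $\ell<k$, not only $\ell=k$. (For $\ell=k$, complementation on $\binom{[2k]}{k}$ gives $|\mathcal{F}|+|\mathcal{G}|\le\binom{2k}{k}$, and AM--GM finishes.) In the base case with $\ell<k$, $\mathcal{G}$ must avoid the $\ell$-shadow $\partial_\ell\mathcal{C}$ of $\mathcal{C}=\{[2k]\setminus F: F\in\mathcal{F}\}$, and one has to prove
$|\mathcal{F}|\bigl(\binom{2k}{\ell}-|\partial_\ell\mathcal{C}|\bigr)\le\binom{2k-1}{k-1}\binom{2k-1}{\ell-1}$.
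This needs a sharp shadow estimate---the Kruskal--Katona theorem, or its consequence Hilton's lemma reducing to lexicographic initial segments---followed by a careful optimization over $|\mathcal{F}|$. That is where the cited proofs do their real work, and nothing in your proposal supplies it.
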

	
	Let us recall two important notions for a $k$-graph $\mathcal{F}$. The \emph{matching number} $\nu(\mathcal{F})$ is the maximum number of pairwise disjoint members (edges) of $\mathcal{F}$. The \emph{covering number} $\tau(\mathcal{F})$ is the minimum size of a set $T$ satisfying $T \cap F \neq \varnothing$ for all $F \in \mathcal{F}$. Such a $T$ is called a \emph{transversal}.
	
	Note that $1 \leq \nu(\mathcal{F}) \leq \tau(\mathcal{F})  \leq k$ holds for non-empty intersecting $k$-graphs.
	
	In a celebrated paper Erdős and Lovász \cite{EL} proved (among other things) that
	\begin{equation}
		|\mathcal{F}| \leq k^k
	\end{equation}
	holds for any intersecting $k$-graph with $\tau(\mathcal{F}) = k$. The scope of the present note is to consider the cross-intersecting version of this problem.
	
	\begin{definition}
		Let $m(k) = \max\{|\mathcal{F}| : \mathcal{F} \text{ is an intersecting } k\text{-graph with } \tau(\mathcal{F}) = k\}$.
		For $k , \ell \geq 1$ define 
		
		$m(k, \ell) = \max\{|\mathcal{F}||\mathcal{G}| : \mathcal{F} \text{ is a } k\text{-graph, } \mathcal{G} \text{ is an } \ell\text{-graph, they are cross-intersecting} \\ \text{and satisfy}$  $ \tau(\mathcal{F}) =  \ell, \tau(\mathcal{G}) =k \}$.
	\end{definition}

    Note that $m(k,\ell)=m(\ell ,k )$ and $m(k,1)=k$.
	
	The exact or even the asymptotic determination of $m(k)$ seems to be an untreatable problem. For $k=2$ and $3$, $m(2)=3$ and $m(3)=10$ are known and in both cases $\binom{[2k-1]}{k}$ provides equality, although for $k=3$  Erdős and Lovász discovered a completely different $3$-graph also attaining the equality. Already for $k=4$, there is a large gap (cf. \cite{FOT} and \cite{FW}):
	
	\begin{equation}
		42 \leq m(4) \leq 64.
	\end{equation}
	
	For a $k$-graph $\mathcal{F}$ with $\tau(\mathcal{F}) = t$, define $\mathcal{T}(\mathcal{F}) = \{T : |T| = t, T \text{ is a transversal of } \mathcal{F}\}$. 
	
	Let us recall a simple but important result of Gyárfás \cite{Gyarfas} :
	
	\begin{theorem}\label{thm:gyarfas}
		For any $k$-graph $\mathcal{F}$ with $\tau(\mathcal{F}) = t$,
		\begin{equation}
			|\mathcal{T}(\mathcal{F})| \leq k^t
		\end{equation}
		with equality if and only if $\mathcal{F}$ consists of $t$ pairwise disjoint edges.
	\end{theorem}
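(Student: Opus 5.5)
We argue via a branching tree, the standard Erdős–Lovász style argument. We build a rooted tree of depth $t$ whose nodes at depth $i$ are labelled by $i$-element sets $S$ with $S\cap F\neq\varnothing$ for the edges already "hit". The root is labelled $\varnothing$. For a node with label $S$, $|S|=i<t$, the set $S$ is not a transversal, since $\tau(\mathcal{F})=t>i$. So we may fix one edge $F_S\in\mathcal{F}$ with $F_S\cap S=\varnothing$; this choice is made once per node. The children of the node are labelled $S\cup\{x\}$ for $x\in F_S$, so there are at most $k$ children.

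The key claim is that every $T\in\mathcal{T}(\mathcal{F})$ appears as the label of some leaf. Starting at the root, suppose the current label $S$ satisfies $S\subseteq T$ and $|S|<t$. Since $T$ is a transversal, $T\cap F_S\neq\varnothing$. Since $S\cap F_S=\varnothing$, any $x\in T\cap F_S$ lies in $T\setminus S$. Moving to the child $S\cup\{x\}$ keeps the label inside $T$ and increases its size by one. After $t$ steps we reach a leaf labelled by a $t$-subset of $T$, which is $T$ itself. Because the tree has at most $k^t$ leaves, we get $|\mathcal{T}(\mathcal{F})|\le k^t$.

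For equality, first check the easy direction. If $\mathcal{F}=\{F_1,\dots,F_t\}$ consists of pairwise disjoint $k$-sets, then $\tau=t$, and the minimum transversals are exactly the sets formed by picking one element from each $F_i$, giving $k^t$ of them.

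The converse is the main work. Suppose equality holds. Then the leaves are in bijection with $\mathcal{T}(\mathcal{F})$, so every branch has exactly $k$ distinct children and distinct leaves carry distinct labels. We argue by induction on $t$; the case $t=1$ says that $k$ singletons are transversals, so all $k$ elements of $F_\varnothing$ lie in every edge. Hence every edge equals $F_\varnothing$, i.e. $\mathcal{F}$ is a single edge.

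For general $t$, let $F=F_\varnothing$. For $x\in F$, consider $\mathcal{F}_x=\{G\in\mathcal{F}:x\notin G\}$. Transversals of $\mathcal{F}$ containing $x$ are exactly $\{x\}\cup T'$ with $T'$ a transversal of $\mathcal{F}_x$ of size $t-1$. By the first part, $\tau(\mathcal{F}_x)\ge t-1$, and the count of such $T'$ is at most $k^{t-1}$. Since the subtrees are disjoint in leaves, equality forces every $x\in F$ to contribute exactly $k^{t-1}$ leaves. Each $\mathcal{F}_x$ then has $\tau=t-1$ and attains equality, so by induction $\mathcal{F}_x$ is $t-1$ pairwise disjoint edges.

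The delicate step is to show that these $t-1$ edges avoid $F$ entirely and do not depend on $x$. I would handle it by counting distinct leaf labels. If some edge $G$ of $\mathcal{F}_x$ met $F$ in a point $y\neq x$, then the transversal $\{x,y,\dots\}$ would be counted in both subtrees $x$ and $y$, contradicting injectivity of the leaf labels. So the edges of $\mathcal{F}_x$ avoid $F$. Moreover, any edge of $\mathcal{F}$ other than $F$ that meets $F$ must contain all of $F$, so it equals $F$. Therefore $\mathcal{F}\setminus\{F\}=\mathcal{F}_x$ for every $x$, and $\mathcal{F}$ consists of $t$ pairwise disjoint edges.
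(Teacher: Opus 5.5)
The paper gives no proof of this statement. It is quoted from Gy\'arf\'as and used as a black box (in Lemma 2.6, in the upper bound of Theorem 1.5, and in Lemma 4.2), so there is no in-paper argument to compare against. Your proof is correct, and it is the standard branching argument. The mechanism you use is exactly what the paper itself does in Lemma 2.4, and iterates in Lemmas 2.7 and 4.2: pick an edge disjoint from the current partial set $S$ (it exists because $\tau>|S|$), then branch over its $k$ points.

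In the equality part, your injection from $\mathcal{T}(\mathcal{F})$ into the exactly $k^t$ leaves becomes a bijection. So the leaf labels are pairwise distinct members of $\mathcal{T}(\mathcal{F})$. Your collision argument is then sound: a point $y\in G\cap F$ with $G\in\mathcal{F}_x$ yields a $T\supseteq\{x,y\}$ labelling leaves below both $\{x\}$ and $\{y\}$. The conclusion $\mathcal{F}\setminus\{F\}=\mathcal{F}_x$ also holds.

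Two spots deserve tighter wording.
\begin{itemize}
\item The bound $\tau(\mathcal{F}_x)\ge t-1$ does not come from the first part. It comes from the fact that $T'\cup\{x\}$ is a transversal of $\mathcal{F}$ for every transversal $T'$ of $\mathcal{F}_x$.
\item The phrase ``equality forces every $x$ to contribute exactly $k^{t-1}$ leaves'' should be spelled out as follows. The $k^{t-1}$ leaves below $\{x\}$ carry distinct labels, each a member of $\mathcal{T}(\mathcal{F})$ containing $x$. Since $T\mapsto T\setminus\{x\}$ is injective, $\mathcal{F}_x$ has at least $k^{t-1}$ transversals of size $t-1$. Combined with the first part, this forces $\tau(\mathcal{F}_x)=t-1$ and $|\mathcal{T}(\mathcal{F}_x)|=k^{t-1}$, which is exactly what your induction hypothesis requires.
\end{itemize}
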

	
	We shall use this result together with a construction to show:
	
	\begin{theorem}\label{thm:main}
		Let $k , \ell \geq 2$ be integers. Then
		\begin{equation}
			(\ell^{k-1}+\ell-1) (k^{\ell-1}+k-1) \leq m(k, \ell) \leq \ell^k k^{\ell}.
		\end{equation}
	\end{theorem}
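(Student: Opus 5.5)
The upper bound will come from Theorem~\ref{thm:gyarfas}; the lower bound from an explicit pair of families built on a $(k-1)\times(\ell-1)$ grid.

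\textbf{Upper bound.} Let $\mathcal{F}$ be a $k$-graph and $\mathcal{G}$ an $\ell$-graph that are cross-intersecting, with $\tau(\mathcal{F})=\ell$ and $\tau(\mathcal{G})=k$. Cross-intersection says every $G\in\mathcal{G}$ meets every member of $\mathcal{F}$, so $G$ is a transversal of $\mathcal{F}$. Since $|G|=\ell=\tau(\mathcal{F})$, we get $G\in\mathcal{T}(\mathcal{F})$, that is, $\mathcal{G}\subseteq\mathcal{T}(\mathcal{F})$. Theorem~\ref{thm:gyarfas} applied to the $k$-graph $\mathcal{F}$ with $t=\ell$ then gives $|\mathcal{G}|\le k^{\ell}$. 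By the symmetric argument, $\mathcal{F}\subseteq\mathcal{T}(\mathcal{G})$ and $|\mathcal{F}|\le \ell^{k}$. Multiplying yields $m(k,\ell)\le \ell^k k^\ell$. This part is routine.

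\textbf{Lower bound: the construction.} Take distinct vertices
\begin{itemize}
\item[] $x_{ij}$ for $1\le i\le k-1$, $1\le j\le \ell-1$,
\item[] $y_j$ for $1\le j\le \ell-1$,
\item[] $z_i$ for $1\le i\le k-1$,
\item[] and one more vertex $x$.
\end{itemize}
Define the columns $A_j=\{x_{1j},\dots,x_{k-1,j},y_j\}$, which are $k$-sets, and the rows $B_i=\{x_{i1},\dots,x_{i,\ell-1},z_i\}$, which are $\ell$-sets. Then define
\[
\mathcal{F}=\{A_1,\dots,A_{\ell-1}\}\cup\{\{x,b_1,\dots,b_{k-1}\}: b_i\in B_i\},
\]
\[
\mathcal{G}=\{B_1,\dots,B_{k-1}\}\cup\{\{x,a_1,\dots,a_{\ell-1}\}: a_j\in A_j\}.
\]

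\textbf{Lower bound: the checks.}
\begin{itemize}
\item[] \emph{Sizes.} The rows $B_i$ are pairwise disjoint, so different choices of the $b_i$ give different $k$-sets. None of these sets equals some $A_j$, because they contain $x$. Hence $|\mathcal{F}|=\ell^{k-1}+\ell-1$, and symmetrically $|\mathcal{G}|=k^{\ell-1}+k-1$.
\item[] \emph{Cross-intersection.} $A_j\cap B_i\ni x_{ij}$. Each set of the second type in $\mathcal{F}$ meets every $B_i$ by construction, and each set of the second type in $\mathcal{G}$ meets every $A_j$. Any two sets of the second types share $x$.
\item[] \emph{Covering number of $\mathcal{F}$.} The set $\{x,y_1,\dots,y_{\ell-1}\}$ is a transversal of size $\ell$, so $\tau(\mathcal{F})\le\ell$. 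The columns $A_1,\dots,A_{\ell-1}$ are pairwise disjoint. The member $\{x,z_1,\dots,z_{k-1}\}$ avoids all of them. So $\nu(\mathcal{F})\ge\ell$, which gives $\tau(\mathcal{F})\ge\ell$.
\item[] \emph{Covering number of $\mathcal{G}$.} Symmetrically, $\{x,z_1,\dots,z_{k-1}\}$ shows $\tau(\mathcal{G})\le k$. The rows together with $\{x,y_1,\dots,y_{\ell-1}\}$ form $k$ pairwise disjoint members, so $\tau(\mathcal{G})=k$.
\end{itemize}
Hence $m(k,\ell)\ge(\ell^{k-1}+\ell-1)(k^{\ell-1}+k-1)$.

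The only real work was finding a construction in which the "fixed" edges $A_j$ and $B_i$ meet each other; the grid vertices $x_{ij}$ achieve this. The extra vertices $y_j$ and $z_i$ supply the disjoint members that certify the covering numbers.
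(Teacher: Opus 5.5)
Your proof is correct and follows the paper's argument. The upper bound comes from Gyárfás's theorem applied to $\mathcal{G}\subseteq\mathcal{T}(\mathcal{F})$ and $\mathcal{F}\subseteq\mathcal{T}(\mathcal{G})$; you observe this inclusion directly, whereas the paper routes it through Claim 2.3 for optimal pairs. The lower bound uses the same grid-plus-apex construction. The only minor difference is that you certify $\tau(\mathcal{F})\ge\ell$ by exhibiting $\ell$ pairwise disjoint members, using the private vertices $z_i$, while the paper does a case analysis on whether a transversal contains the apex vertex; you also explicitly verify $\tau(\mathcal{F})\le\ell$, which the paper leaves implicit.
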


	Let us note that the ratio of the upper and the lower bound is less than $\ell k$ which is much smaller than the corresponding ratio in the case of $m(k)$  as $k \to \infty$.
	
     We  further improve the upper bound as follows:
	\begin{theorem}\label{}
	Let $k , \ell \geq 2$ be integers. Then 
	\begin{equation}
		m(k, \ell) \leq (1+\min\{k,\ell\}) k^{\ell-1} \ell^{k-1}
	\end{equation}
	Moreover if   $ \log k < \ell < e^{k}$ , then
		\begin{equation}
		m(k, \ell) = (\ell^{k-1}+\ell-1) (k^{\ell-1}+k-1) \text{ for } k,\ell \geq N .
		\end{equation}
	
\end{theorem}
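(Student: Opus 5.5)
The plan has three parts: a Gyárfás-type branching count for the general upper bound, a stability analysis that forces the structure of near-extremal pairs, and a comparison of that structure with the construction behind the lower bound of Theorem~\ref{thm:main}. The starting point is the observation behind Theorem~\ref{thm:main}. Since $\mathcal F,\mathcal G$ are cross-intersecting and $\tau(\mathcal F)=\ell$, every $G\in\mathcal G$ is a transversal of $\mathcal F$ of size $\ell$, so $\mathcal G\subseteq\mathcal T(\mathcal F)$. Symmetrically, $\mathcal F\subseteq\mathcal T(\mathcal G)$. Theorem~\ref{thm:gyarfas} then gives $|\mathcal G|\le k^\ell$ and $|\mathcal F|\le \ell^k$.

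To gain the factor $(1+\min\{k,\ell\})/(k\ell)$ I will make the branching in Gyárfás' argument interact between the two sides. Fix $F_1\in\mathcal F$. Every $G\in\mathcal G$ meets $F_1$, so $|\mathcal G|\le\sum_{x\in F_1} d_{\mathcal G}(x)$. For $x\in F_1$, the sets $G\setminus\{x\}$ with $G\ni x$ are $(\ell-1)$-transversals of $\mathcal F(\bar x)=\{F\in\mathcal F: x\notin F\}$, so $d_{\mathcal G}(x)\le |\mathcal T(\mathcal F(\bar x))|\le k^{\ell-1}$.

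Call $x$ \emph{heavy} if $d_{\mathcal G}(x)>0$. The main claim is a trade-off between heavy vertices on the two sides: if $a$ is the number of heavy vertices of $F_1$ and $b$ the number of heavy vertices of some $G_1\in\mathcal G$, then the incidence structure forces roughly $ab\le 1+\min\{k,\ell\}$. To prove it I will use the following fact: if $x\in F_1$ and $y\in G_1$ are both heavy and $x\ne y$, then the $\mathcal F$-edges through $y$ and the $\mathcal G$-edges through $x$ must cross-intersect inside a smaller ground structure. That should let me replace one factor $k$ (respectively $\ell$) in the iterated bound by a bounded quantity.

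Combining $|\mathcal G|\le a\,k^{\ell-1}$ with $|\mathcal F|\le b\,\ell^{k-1}$ and the trade-off gives $(1+\min\{k,\ell\})k^{\ell-1}\ell^{k-1}$. A degenerate case has to be handled separately: when all edges of $\mathcal F$ are pairwise disjoint we have $|\mathcal F|=\ell$ and $|\mathcal G|\le k^\ell$. There the product is $\ell k^\ell$, which is at most $(1+\min\{k,\ell\})k^{\ell-1}\ell^{k-1}$ because $k\le \ell^{k-2}\cdot 3$ for $k,\ell\ge2$ (the small cases $k\le 3$ are checked directly).

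The step I expect to be the main obstacle is this heavy-vertex trade-off, i.e.\ showing that $\mathcal F$ and $\mathcal G$ cannot simultaneously have most vertices of a fixed edge of large degree. My first attempt will be induction on $k+\ell$. Deleting a vertex $x$ and passing to the pair $(\mathcal F(\bar x),\{G\setminus\{x\}:x\in G\in\mathcal G\})$ produces a cross-intersecting pair of type $(k,\ell-1)$ with covering numbers $\ell-1$ and at least $k$. Feeding $m(k,\ell-1)$ into the sum over $x\in F_1$, and using $m(k,1)=k$ as the base case, should produce the bound.

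For the exact value when $\log k<\ell<e^k$ and $k,\ell\ge N$, the lower bound comes from the construction of Theorem~\ref{thm:main}. For the upper bound I will run a stability version of the argument above. If $|\mathcal F||\mathcal G|\ge(\ell^{k-1}+\ell-1)(k^{\ell-1}+k-1)$, then the inequalities in the branching must be nearly tight. This forces one side, say $\mathcal G$, to contain a vertex $x$ with $d_{\mathcal G}(x)$ close to $k^{\ell-1}$. By the equality case of Theorem~\ref{thm:gyarfas}, $\mathcal F(\bar x)$ then contains $\ell-1$ pairwise disjoint edges, and symmetrically for $\mathcal G$. With this structure fixed, the remaining edges can be counted exactly and compared with the construction. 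The conditions $\log k<\ell<e^k$ ensure that the error terms $k^{\ell-2}\ell^{k}$ and $\ell^{k-2}k^{\ell}$ coming from non-extremal branches are dominated by the lower-order term $(\ell-1)k^{\ell-1}+(k-1)\ell^{k-1}$ of the target value.
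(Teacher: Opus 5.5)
Your central step, the heavy-vertex trade-off $ab\le 1+\min\{k,\ell\}$, is false as you define it. By Lemma 2.1 every vertex of $F_1$ lies in some member of $\mathcal{G}$; otherwise $F_1$ minus that vertex would be a transversal of $\mathcal{G}$ with $k-1$ elements. Hence $a=k$, $b=\ell$ and $ab=k\ell$. If you raise the heaviness threshold instead, the estimate $|\mathcal{G}|\le a\,k^{\ell-1}$ breaks, because light vertices still contribute. The ``fact'' about heavy $x\in F_1$, $y\in G_1$ is not a statement you can actually use.

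The fallback induction also fails, because $\tau(\mathcal{G}(x))$ need not be at least $k$. In the construction of Theorem 1.5, take a grid vertex $x\in A_i\cap B_j$; then $\{y,a\}$ with $a\in A_i\setminus\{x\}$ is a transversal of $\mathcal{G}(x)$. Even with the right covering numbers, turning $|\mathcal{F}(\overline{x})||\mathcal{G}(x)|\le m(k,\ell-1)$ into a bound on $|\mathcal{G}(x)|$ means dividing by $|\mathcal{F}(\overline{x})|$. In the same construction this is only $\ell-1$ at $x=y$.

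The idea you are missing is Lemma 2.7 of the paper, where the trade-off is between $|\mathcal{F}|$, measured in units of $\ell^{k-1}$, and the branching of $\mathcal{G}$. The argument runs as follows.
\begin{itemize}
\item Order $F_1=(x_1,\dots,x_k)$ greedily by Lemma 2.4, so that each prefix $\{x_1,\dots,x_i\}$ lies in at least $\ell^{-i}|\mathcal{F}|$ members.
\item Sort each $G$ by its first-hit index $h_G(F_1)=\min\{j:x_j\in G\}$. This partitions $\mathcal{G}$; it is not a union bound.
\item Choose $t$ with $(t-1)\ell^{k-1}<|\mathcal{F}|\le t\ell^{k-1}$. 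Then more than $(t-2)\ell^{k-i}$ members contain $x_1,\dots,x_{i-1}$ but avoid $x_i$.
\item A $G$ with first hit $x_i$ avoids $x_1,\dots,x_{i-1}$. So on such a member, listed with $x_1,\dots,x_{i-1}$ first, its first hit is again at a position $\ge i$.
\item Iterating, with each new member again chosen greedily, gives an injection of $\mathcal{G}$ into $[k]^\ell$ whose first $t$ coordinates are nondecreasing.
\end{itemize}
Hence $|\mathcal{G}|\le\binom{k+t-1}{t}k^{\ell-t}$, and
$|\mathcal{F}||\mathcal{G}|\le \ell^{k-1}k^{\ell}\cdot t\binom{k+t-1}{t}k^{-t}\le (1+k)\,\ell^{k-1}k^{\ell-1}$.
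Exchanging the roles of $\mathcal{F}$ and $\mathcal{G}$ gives the factor $1+\ell$.

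For the exact value, your sketch lacks its quantitative core, and the bookkeeping is wrong.
\begin{itemize}
\item A degree ``close to $k^{\ell-1}$'' does not let you invoke the equality case of Theorem 1.4. You need a stability form such as Lemma 4.2: $|\mathcal{T}(\mathcal{H})|\le (k^2-k+1)k^{t-2}$ whenever the $k$-graph $\mathcal{H}$ has more than $t=\tau(\mathcal{H})$ edges.
\item You give no mechanism producing a single vertex $x$ at which $\mathcal{F}(x)$ and $\mathcal{G}(x)$ are both huge. The paper gets this from Lemma 4.1, which gives decay $(2\sqrt{2}/3)^t$ of $|\mathcal{G}(S)|$ when $|\mathcal{F}(\overline{S})|$ is large compared with $\Delta(\mathcal{F})$, together with Claims 4.3--4.5. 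It then needs Claim 4.7 to exclude $|\mathcal{F}(\overline{x})|\ge\ell$ or $|\mathcal{G}(\overline{x})|\ge k$.
\item An error of size $k^{\ell-2}\ell^{k}=(\ell/k)\,k^{\ell-1}\ell^{k-1}$ is of the order of the main term. It cannot be dominated by $(k-1)\ell^{k-1}+(\ell-1)k^{\ell-1}$.
\end{itemize}
What must be shown, and what the paper shows case by case, is that every non-extremal configuration satisfies $|\mathcal{F}||\mathcal{G}|\le\ell^{k-1}k^{\ell-1}$, which is strictly below the construction. The hypothesis $\log k<\ell<e^k$ does not enter through such a comparison. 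It enters, in Claim 4.5, through the degree thresholds $f(k)\ell^{k-2}$ and $f(\ell)k^{\ell-2}$, where $f(x)\sim 3\log x/\log\log x$.
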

	
	Let us note that  in particular $m(k,k)=(k^{k-1}+k-1)^2 $ for $k\geq k_0$ and if $\ell \geq 2$ is fixed, then (1.7) implies $m(k, \ell) =  O(k^{\ell-1}\ell^{k-1}) $ as $k \to \infty$.
	
	To determine the exact value of $m(k,\ell)$ is quite a challenge for small values of $k$ and $\ell$. The second part of the present paper is taken up by the proof of the following two results.

	\begin{theorem}\label{}
		\begin{equation}
			m(3,3) = 11^2 = 121.
		\end{equation}
	\end{theorem}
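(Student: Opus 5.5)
The lower bound $m(3,3)\ge 121$ is the case $k=\ell=3$ of the construction behind Theorem~\ref{thm:main}, since $(3^{2}+2)^2=121$. So the work lies in the upper bound $|\mathcal{F}||\mathcal{G}|\le 121$ for cross-intersecting $3$-graphs $\mathcal{F},\mathcal{G}$ with $\tau(\mathcal{F})=\tau(\mathcal{G})=3$.

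The basic observation is that every $G\in\mathcal{G}$ meets every member of $\mathcal{F}$. Since $|G|=3=\tau(\mathcal{F})$, this means $\mathcal{G}\subseteq\mathcal{T}(\mathcal{F})$, and symmetrically $\mathcal{F}\subseteq\mathcal{T}(\mathcal{G})$. By Theorem~\ref{thm:gyarfas}, $|\mathcal{G}|\le 27$, with equality only when $\mathcal{F}$ consists of three pairwise disjoint edges.

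I would organise the proof around $\nu(\mathcal{F})$ and $\nu(\mathcal{G})$, assuming without loss of generality $|\mathcal{F}|\ge|\mathcal{G}|$.

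\textbf{Case $\nu(\mathcal{G})=3$.} Let $G_1,G_2,G_3$ be disjoint members of $\mathcal{G}$. Every $F\in\mathcal{F}$ contains exactly one vertex from each $G_i$, so $|\mathcal{F}|\le 27$. I would then bound $|\mathcal{G}|$ by a refined Gyárfás branching argument. Take $F_0\in\mathcal{F}$ and branch on which vertex of $F_0$ a transversal uses; at each branch pick an edge of $\mathcal{F}$ avoiding the chosen vertex, and so on. If $\mathcal{F}$ is large, it contains many edges with distinct ``patterns'' across $G_1,G_2,G_3$, and these kill most of the $27$ leaves of the branching tree. Balancing the two counts should give $|\mathcal{F}||\mathcal{G}|\le 121$; the extremal example, after all, has sizes $11$ and $11$.

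\textbf{Case $\nu(\mathcal{F}),\nu(\mathcal{G})\le 2$.} Here I would use the sharper form of Theorem~\ref{thm:gyarfas}: if $\mathcal{F}$ has no three pairwise disjoint edges, then some branch of the tree has only $2$ rather than $3$ choices. This is because after choosing two edges, the third edge meets one of them and so shares a vertex with it. This gives a bound like $|\mathcal{T}(\mathcal{F})|\le 27-c$. More usefully, a careful enumeration along the tree shows $|\mathcal{T}(\mathcal{F})|\le 3\cdot 2\cdot 2+\ldots$ under further intersection conditions.

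The plan is to show the following dichotomy. Either both $|\mathcal{F}|,|\mathcal{G}|\le 11$, or one of them is at most some $a$ while the other is at most $121/a$. The second alternative would come from the fact that $\mathcal{F}$ is contained in the transversal set of $\mathcal{G}$, which is small when $\mathcal{G}$ is large and dense.

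The main obstacle is this case analysis near the extremal configuration, for example $|\mathcal{F}|=12$ with $|\mathcal{G}|\ge 11$. I would first try to exploit the fact that the pair $(\mathcal{T}(\mathcal{G}),\mathcal{T}(\mathcal{F}))$ is itself cross-intersecting. Replacing $\mathcal{F},\mathcal{G}$ by maximal families, I may assume $\mathcal{F}=\mathcal{T}(\mathcal{G})$ and $\mathcal{G}=\mathcal{T}(\mathcal{F})$, a closed ``blocker'' pair. I would then classify closed pairs by the structure of a minimal subfamily with covering number $3$: three disjoint edges, a triangle-like configuration, and so on. Each such subfamily bounds the transversal family through explicit counting. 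This finite classification, rather than any single inequality, is where I expect most of the effort to go.
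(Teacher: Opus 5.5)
\textbf{There is a genuine gap.} Your lower bound is fine: it is the case $k=\ell=3$ of the construction in Theorem 1.5. For the upper bound, however, the proposal is an outline whose decisive step is missing. Nowhere do you establish an inequality coupling $|\mathcal{F}|$ with $|\mathcal{G}|$.

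Every concrete estimate you state bounds one family at a time: $|\mathcal{F}|\le 27$ when $\nu(\mathcal{G})=3$, and $|\mathcal{T}(\mathcal{F})|\le 27-c$ when $\nu(\mathcal{F})\le 2$. Even the sharp bound $|\mathcal{T}(\mathcal{F})|\le 21$ for $|\mathcal{F}|>3$ (Lemma 4.2) only yields $21^2=441$. Bounds of this type cannot reach $121$, because the two sizes genuinely trade off. Three disjoint edges together with their $27$ transversals give sizes $(3,27)$, while the optimal pairs give $(11,11)$.

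Your ``dichotomy'' (both sizes at most $11$, or one at most $a$ and the other at most $121/a$) is just a restatement of $|\mathcal{F}||\mathcal{G}|\le 121$. Likewise, ``balancing the two counts should give $121$'' is exactly the argument that has to be supplied.

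Neither branch of your case split can be dispatched by a lossy estimate, because both contain optimal pairs:
\begin{itemize}
\item In the construction of Theorem 1.5 with the extra vertices of $B_1,B_2$ chosen outside $A_1\cup A_2$, one has $\nu(\mathcal{G})=3$.
\item Taking the extra vertex of $B_1$ equal to that of $A_1$ gives an optimal pair with $\nu(\mathcal{F})=\nu(\mathcal{G})=2$.
\end{itemize}
Also, once you assume $|\mathcal{F}|\ge|\mathcal{G}|$, the case $\nu(\mathcal{F})=3>\nu(\mathcal{G})$ is no longer covered by symmetry.

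The paper gets the tradeoff by a local argument at a vertex $x$ of maximum degree in $\mathcal{F}$ of a maximal pair. Your reduction to $\mathcal{F}=\mathcal{T}(\mathcal{G})$, $\mathcal{G}=\mathcal{T}(\mathcal{F})$ is correct; it is Claim 2.3. The argument then runs as follows.
\begin{enumerate}
\item $|\mathcal{F}|\le 3|\mathcal{F}(x)|$ by Lemma 2.4.
\item Every member of $\mathcal{G}(\overline{x})$ contains a minimal transversal of the link graph $\mathcal{F}(x)$. A minimal transversal of size $s$ lies in at most $3^{3-s}$ members of $\mathcal{G}$ (Lemmas 2.6 and 2.8). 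Hence $|\mathcal{G}(\overline{x})|\le 9N_1+3N_2+N_3$, with the $N_s$ computed on $\mathcal{F}(x)$.
\item $\mathcal{F}(x)$ is a graph with maximum degree at most $3$ and covering number at most $3$. A short enumeration (paths and cycles for maximum degree $2$, three cases for maximum degree $3$) gives $|\mathcal{F}(x)|+|\mathcal{G}(\overline{x})|\le 11$ whenever $|\mathcal{F}(x)|\ge 4$, and $|\mathcal{G}(\overline{x})|\le 10$ when $|\mathcal{F}(x)|=3$.
\item The same bounds hold with the roles of $\mathcal{F}$ and $\mathcal{G}$ exchanged at the same vertex $x$. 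Applying AM--GM to $(|\mathcal{F}(x)|+|\mathcal{F}(\overline{x})|)(|\mathcal{G}(x)|+|\mathcal{G}(\overline{x})|)$, and using $|\mathcal{G}|\le|\mathcal{F}|\le 3|\mathcal{F}(x)|$ for small degrees, gives $121$.
\end{enumerate}
Your outline needs something of this kind: a bound on the part of one family avoiding $x$ in terms of the link of the other family at $x$. Without it, the ``finite classification'' you defer to is the entire proof, and it remains to be done.
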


	\begin{theorem}\label{prop:seven}
		\begin{align}
			m(2,2) &= 9, \\
			m(3,2) &= 25, \\
		    m(k,2) &= k2^k(k\geq 4). 
		\end{align}
	\end{theorem}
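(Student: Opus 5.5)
We identify the extremal configurations first and then bound the product. Throughout, $\mathcal{F}$ is a $k$-graph with $\tau(\mathcal{F})=2$ and $\mathcal{G}$ is a graph with $\tau(\mathcal{G})=k$. Cross-intersection says two things. Every $F\in\mathcal{F}$ is a vertex cover of $\mathcal{G}$ of size $k=\tau(\mathcal{G})$, so $\mathcal{F}\subseteq\mathcal{T}(\mathcal{G})$. Every edge of $\mathcal{G}$ is a $2$-element transversal of $\mathcal{F}$, so $\mathcal{G}\subseteq\mathcal{T}(\mathcal{F})$.

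For the lower bounds, Theorem~\ref{thm:main} with $\ell=2$ gives $(2^{k-1}+1)(2k-1)$, which equals $9$ for $k=2$ and $25$ for $k=3$. It is realised by a perfect matching $a_ib_i$ ($i\le k$) together with the edges $a_1a_j$ ($j\ge 2$). For $k\ge 4$ we use instead $\mathcal{G}=$ a matching $\{a_1b_1,\dots,a_kb_k\}$ and $\mathcal{F}=$ all $2^k$ sets picking one endpoint of each edge. This $\mathcal{F}$ has $\tau(\mathcal{F})=2$: $\{a_1,b_1\}$ is a transversal, and no single vertex lies in every member. The product is $k2^k$, which exceeds $(2^{k-1}+1)(2k-1)$ exactly when $k\ge 4$. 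So it remains to prove
\[
N(\mathcal{G})\,e(\mathcal{G})\le \max\{k2^k,\ (2^{k-1}+1)(2k-1)\},
\]
where $N(\mathcal{G})=|\mathcal{T}(\mathcal{G})|$ is the number of minimum vertex covers and $e(\mathcal{G})$ the number of edges. It suffices to prove this for $\mathcal{G}$ without isolated vertices, and the condition $\tau(\mathcal{F})=2$ is then simply discarded.

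Two crude bounds are available. The branching argument (pick an edge $uv$; every cover contains $u$ or $v$; recurse to depth $k$) gives $N(\mathcal{G})\le 2^k$. Gyárfás' Theorem~\ref{thm:gyarfas} applied to $\mathcal{F}$ gives $e(\mathcal{G})\le|\mathcal{T}(\mathcal{F})|\le k^2$. Neither suffices alone, so the heart of the proof is a trade-off: every edge beyond a $k$-matching destroys many minimum covers. I would split according to $\nu(\mathcal{G})$.

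\emph{Case $\nu(\mathcal{G})=k$.} Fix a perfect-for-cover matching $M=\{a_ib_i\}$. Every minimum cover picks exactly one endpoint of each $a_ib_i$ and uses no vertex outside $V(M)$. An edge $uv$ with $u\notin V(M)$ forces $v$ into every cover, so we may treat it as a "forced" vertex. An edge $a_ia_j$ forbids the choice $(b_i,b_j)$. Thus $N(\mathcal{G})$ is the number of satisfying assignments of a $2$-SAT instance on $k$ variables with $e-k$ clauses, plus forcing literals. I would prove $N\cdot e\le\max(\dots)$ by induction on $k$. Choose a variable $x_1$ of maximum clause-degree $d$ and branch on its value. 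One branch fixes at least $d$ further variables (or forces that many choices), the other is unconstrained by those clauses. This gives roughly $N\le 2^{k-1}+2^{k-1-d}$, or a product of such factors when clauses are spread out. Combining this with $e\le k+(\text{number of clauses})$ and the inductive hypothesis on the residual instance should yield the bound. The star configuration ($d=k-1$, giving $2^{k-1}+1$ and $e=2k-1$) and the bare matching are the two competing extremes, and the inequality between them flips at $k=4$.

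\emph{Case $\nu(\mathcal{G})<k$.} Here I would use Gallai--Edmonds (or simply a maximal matching plus an odd cycle) to locate an odd structure. Branching on a triangle requires two of its three vertices, so it spends budget $2$ for only $3$ options instead of $4$. This should give $N(\mathcal{G})\le (3/4)2^k$, or smaller, while $e\le k^2$ still holds. That alone is too weak for large $e$, so I would again branch on a maximum-degree vertex $v$. Covers avoiding $v$ must contain all $\deg v$ neighbours, which makes $N$ decay exponentially in $\deg v$ while $e$ grows at most linearly in the degrees.

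The main obstacle is making this degree/branching trade-off precise in general. The product $N\cdot e$ is not monotone under deleting edges, so a clean induction needs the right potential. The first thing I would try is the inductive inequality $N(\mathcal{G})\le N(\mathcal{G}-v)+N(\mathcal{G}-N[v])$, with the budget reduced by $1$ and by $\deg v$ respectively, together with $e(\mathcal{G})=e(\mathcal{G}-v)+\deg v$. The small cases $k=2,3$ would be checked by hand, where the second term in the maximum wins.
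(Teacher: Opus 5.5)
\textbf{Verdict: genuine gap.} Your lower bounds are fine, but the upper bound rests on a reduction that is false, and the inductive argument that would prove the correct version is not carried out.

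On the lower bounds: the matching plus the star $a_1a_j$ is the case $\ell=2$ of Theorem~1.5, and the $2^k$ transversals of a $k$-matching give $k2^k$. Since $k2^k-(2^{k-1}+1)(2k-1)=2^{k-1}-2k+1$, the larger of the two changes exactly at $k=4$.

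The reduction is where things go wrong. You cannot discard $\tau(\mathcal{F})=2$: without it, the target inequality $N(\mathcal{G})e(\mathcal{G})\le\max\{k2^k,(2^{k-1}+1)(2k-1)\}$ is false. Take the matching $a_1b_1,\dots,a_kb_k$ together with $n$ pendant edges $u_1a_1,\dots,u_na_1$. Then $\tau(\mathcal{G})=k$, there are no isolated vertices, and every minimum cover contains $a_1$. So $N(\mathcal{G})=2^{k-1}$ and $e(\mathcal{G})=k+n$, and the product is unbounded. This is exactly the ``forced vertex'' situation you plan to absorb in the case $\nu(\mathcal{G})=k$. What you must keep is that no vertex lies in every minimum cover, i.e.\ $\tau(\mathcal{T}(\mathcal{G}))=2$; recall that for an optimal pair $\mathcal{F}=\mathcal{T}(\mathcal{G})$ by Claim~2.3. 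This condition rules out such edges altogether (cf.\ Lemma~2.1). It is also what your estimate $e(\mathcal{G})\le k^2$ silently uses, since Gy\'arf\'as' theorem there is applied to $\mathcal{F}$ with $\tau(\mathcal{F})=2$.

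Even with the hypothesis restored, the heart of the argument is missing, as you acknowledge. Your recursion $N(\mathcal{G})\le N(\mathcal{G}-v)+N(\mathcal{G}-N[v])$ cannot be closed as stated. The graph $\mathcal{G}-v$ generally has vertices lying in all of its minimum covers, so the only true form of the induction hypothesis does not apply to it.

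Your last suggestion is in fact the paper's decomposition in graph language, and the paper handles exactly this point:
\begin{itemize}
\item It takes $x$ lying in the largest number of minimum covers. Then $|\mathcal{F}(x)|=N(\mathcal{G}-x)$, $|\mathcal{G}(\overline{x})|=e(\mathcal{G}-x)$ and $t=|\mathcal{G}(x)|=\deg x$.
\item Covers avoiding $x$ contain all $t$ neighbours of $x$, which gives $|\mathcal{F}(\overline{x})|\le\min\{|\mathcal{F}(x)|,2^{k-t}\}$.
\item The missing ingredient is Lemma~6.2. One deletes forced vertices one at a time, i.e.\ takes a minimal $S\ni x$ with $\tau(\mathcal{F}(S))=2$. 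Then $(\mathcal{F}(S),\mathcal{G}(\overline{S}))$ is again a cross-intersecting pair with covering numbers $2$ and $k-|S|$ (Lemma~2.5).
\item Each deleted vertex costs at most $k$ edges, because all degrees in $\mathcal{G}$ are at most $k$ (Lemma~2.6). This yields $|\mathcal{F}(x)||\mathcal{G}(\overline{x})|\le m(k-1,2)$.
\item Induction on $k$ then finishes, via an optimization of $(m+\min\{m,2^{k-t}\})(t+n)$ under $mn\le m(k-1,2)$ and $n\ge k-1$.
\end{itemize}

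Your 2-SAT count and the Gallai--Edmonds branch do not supply any of this. In the case $\nu<k$, the bounds $N\le\tfrac34\cdot 2^k$ and $e\le k^2$ are off by a factor of order $k$. As written, the proposal establishes only the lower bounds.
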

	
	\section{Preparations}
	
	 Let us start with a simple observation.
	
	\begin{lemma}\label{lem:twoone} Suppose that $\mathcal{F}$ is a $k$-graph, $\mathcal{G}$ is a $\ell$-graph ,$\tau(\mathcal{F}) =  \ell, \tau(\mathcal{G}) =k $ and $\mathcal{F}$ , $\mathcal{G}$ are cross-intersecting.Then
		\[
		\bigcup_{F \in \mathcal{F}} F = \bigcup_{G \in \mathcal{G}} G.
		\]
	\end{lemma}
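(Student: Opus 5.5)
By symmetry between the roles of $\mathcal{F}$ and $\mathcal{G}$, it suffices to show one inclusion, say $\bigcup_{F \in \mathcal{F}} F \subseteq \bigcup_{G \in \mathcal{G}} G$. The reverse inclusion follows by swapping the two families, since the hypotheses are symmetric under $(\mathcal{F},k,\ell) \leftrightarrow (\mathcal{G},\ell,k)$.

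We argue by contradiction. Suppose some vertex $x \in F_0$ for some $F_0 \in \mathcal{F}$ lies in no member of $\mathcal{G}$. Every $G \in \mathcal{G}$ meets $F_0$, because the families are cross-intersecting. Since $x \notin G$, each $G$ actually meets $F_0 \setminus \{x\}$. Hence $F_0 \setminus \{x\}$ is a transversal of $\mathcal{G}$ of size $k-1$, which gives $\tau(\mathcal{G}) \leq k-1$. This contradicts $\tau(\mathcal{G}) = k$.

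The same argument with roles exchanged uses $|G_0| = \ell$ and $\tau(\mathcal{F}) = \ell$ to give the other inclusion. Note that $\mathcal{G}$ is nonempty here, since $\tau(\mathcal{G}) = k \geq 1$.

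There is no real obstacle. The only point to watch is that the covering number hypothesis is applied to the other family, with the uniformity of the family containing the vertex $x$: $\tau(\mathcal{G}) = k = |F_0|$, and $\tau(\mathcal{F}) = \ell = |G_0|$.
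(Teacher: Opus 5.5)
Your proposal is correct and is essentially the paper's proof: a vertex $x$ of some $F_0 \in \mathcal{F}$ lying in no member of $\mathcal{G}$ would make $F_0 \setminus \{x\}$ a transversal of $\mathcal{G}$ of size $k-1$, contradicting $\tau(\mathcal{G}) = k$, and the other inclusion follows by symmetry. The remark that $\mathcal{G}$ is nonempty is harmless but not needed for the argument.
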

	
	\begin{proof}
		Suppose the contrary and assume by symmetry that there is a vertex $x$ with $x \in F \in \mathcal{F}$ which is not contained in any $G \in \mathcal{G}$. Then $F \cap G = (F \setminus \{x\}) \cap G$ implies that $F \setminus \{x\}$ is a transversal of $\mathcal{G}$. This contradicts $\tau(\mathcal{G}) = k$.
	\end{proof}

	\begin{definition}\label{def:optimalpair}
	For $k, \ell \geq 2$ and a $k$-graph $\mathcal{F}$, a $\ell$-graph $\mathcal{G}$, we say that $\mathcal{F}, \mathcal{G}$ form a $(k,\ell)$ \emph{optimal pair} if $\mathcal{F}$ and $\mathcal{G}$ are cross-intersecting, $ \tau(\mathcal{F}) =  \ell, \tau(\mathcal{G}) =k$ and moreover $|\mathcal{F}||\mathcal{G}| = m(k,\ell)$.
\end{definition}
In what follows, we always assume that $\mathcal{F}, \mathcal{G}$ form an optimal pair.
\begin{claim}\label{claim:optimalpair} Every optimal pair is maximal :
	$$\mathcal{F} = \mathcal{T}(\mathcal{G}),\mathcal{G} = \mathcal{T}(\mathcal{F}).$$
\end{claim}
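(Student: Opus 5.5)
We prove the two equalities by establishing the inclusions $\mathcal{F}\subseteq\mathcal{T}(\mathcal{G})$ and $\mathcal{T}(\mathcal{G})\subseteq\mathcal{F}$; the statement for $\mathcal{G}$ follows by the symmetric argument.

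\textbf{First inclusion.} Since $\mathcal{F}$ and $\mathcal{G}$ are cross-intersecting, every $F\in\mathcal{F}$ meets every $G\in\mathcal{G}$, so each $F$ is a transversal of $\mathcal{G}$. As $|F|=k=\tau(\mathcal{G})$, this gives $F\in\mathcal{T}(\mathcal{G})$. Hence $\mathcal{F}\subseteq\mathcal{T}(\mathcal{G})$, and likewise $\mathcal{G}\subseteq\mathcal{T}(\mathcal{F})$.

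\textbf{Second inclusion.} Set $\mathcal{F}'=\mathcal{T}(\mathcal{G})$, a $k$-graph containing $\mathcal{F}$. We check that the pair $\mathcal{F}',\mathcal{G}$ is admissible in the definition of $m(k,\ell)$.

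First, the pair is cross-intersecting: every member of $\mathcal{F}'$ is a transversal of $\mathcal{G}$, so it meets every $G\in\mathcal{G}$.

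Second, $\tau(\mathcal{G})=k$ is unchanged, since $\mathcal{G}$ itself is unchanged.

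Third, we need $\tau(\mathcal{F}')=\ell$. Since $\mathcal{F}\subseteq\mathcal{F}'$, we have $\tau(\mathcal{F}')\ge\tau(\mathcal{F})=\ell$. Conversely, any $G\in\mathcal{G}$ has size $\ell$ and meets every member of $\mathcal{F}'$, because those members are transversals of $\mathcal{G}$. So $G$ is a transversal of $\mathcal{F}'$, giving $\tau(\mathcal{F}')\le\ell$. Here we use that $\mathcal{G}\neq\varnothing$, which holds because $\tau(\mathcal{G})=k\ge2$.

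Thus $\tau(\mathcal{F}')=\ell$, and the pair $\mathcal{F}',\mathcal{G}$ is admissible. Therefore
\[
|\mathcal{F}'||\mathcal{G}|\le m(k,\ell)=|\mathcal{F}||\mathcal{G}|.
\]
Since $|\mathcal{G}|>0$, this gives $|\mathcal{F}'|\le|\mathcal{F}|$. Together with $\mathcal{F}\subseteq\mathcal{F}'$, we conclude $\mathcal{F}=\mathcal{F}'=\mathcal{T}(\mathcal{G})$.

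\textbf{Symmetric step.} Exchanging the roles of $\mathcal{F}$ and $\mathcal{G}$ (and of $k$ and $\ell$) in the same argument gives $\mathcal{G}=\mathcal{T}(\mathcal{F})$.

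The only point needing care is verifying that $\tau(\mathcal{F}')$ is exactly $\ell$. The upper bound $\tau(\mathcal{F}')\le\ell$ relies on the observation that the members of $\mathcal{G}$ are $\ell$-element transversals of $\mathcal{F}'$.
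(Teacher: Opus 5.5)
Your proof is correct and follows the paper's argument: you show $\mathcal{F}\subseteq\mathcal{T}(\mathcal{G})$, get $\tau(\mathcal{T}(\mathcal{G}))=\ell$ from monotonicity together with the fact that members of $\mathcal{G}$ are $\ell$-element transversals of $\mathcal{T}(\mathcal{G})$, and then invoke optimality. You also make explicit two small points the paper leaves implicit, namely $|F|=k=\tau(\mathcal{G})$ and $\mathcal{G}\neq\varnothing$.
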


\begin{proof}
	By symmetry we only prove the first equality. Since $\mathcal{F}, \mathcal{G}$ are cross-intersecting,  $\mathcal{F} \subset \mathcal{T}(\mathcal{G})$, this implies $|\tau(\mathcal{T}(\mathcal{G}))| \geq |\tau(\mathcal{F})|\geq \ell$. As every $G \in \mathcal{G}$ is a transversal of $\mathcal{T}(\mathcal{G})$, $\tau(\mathcal{T}(\mathcal{G})) = \ell$ follows. Consequently the pair $\mathcal{T}(\mathcal{G}), \mathcal{G}$ would contradict the optimality of the pair $\mathcal{F}, \mathcal{G}$ unless  $\mathcal{T}(\mathcal{G}) = \mathcal{F}$.
\end{proof}

	Let us fix some more notation. Let $X=	\bigcup_{F \in \mathcal{F}} F$ and for sets $S, U\subset X$ define $\mathcal{F}(S) = \{F \setminus S : S \subset F \in \mathcal{F}\}$, $\mathcal{F}(\widehat{S}) = \{F : S \subset F \in \mathcal{F}\}$,  $\mathcal{F}(\overline{S}) = \{F : F \cap S = \emptyset\}$, $\mathcal{F}(\overline{S};\widehat{U})=(\mathcal{F}(\overline{S}))(\widehat{U})$ and $\mathcal{F}(\overline{S};U)=(\mathcal{F}(\overline{S}))(U)$. If $S = \{x\}$ we simply write $\mathcal{F}(x)$, $\mathcal{F}(\widehat{x})$ and $\mathcal{F}(\overline{x})$.Note that  $|\mathcal{F}(x)|=|\mathcal{F}(\widehat{x})|$.
	
		For a hypergraph $\mathcal{H}$ , let $\Delta(\mathcal{H})$ denote its maximal degree.
	\begin{lemma}\label{2.4} Let $\widetilde{\mathcal{F}} \subset \mathcal{F}, S \subset X$ with $|S|\in [0,k-1]$, then there exists $x\in X-S$ such that $|\widetilde{\mathcal{F}}(S\cup \{x\})|\geq \ell^{-1} |\widetilde{\mathcal{F}}(S)|$.  In particular, $|\mathcal{F}|\leq \ell \Delta(\mathcal{F}) .$
	\end{lemma}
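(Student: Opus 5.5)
Since $\mathcal{F}, \mathcal{G}$ form an optimal pair and $|\widetilde{\mathcal{F}}(S)|$ is trivially covered if it is zero, I may assume $\widetilde{\mathcal{F}}(S)\neq\emptyset$. The idea is to use a member of $\mathcal{G}$ that avoids $S$ as a small transversal of the sets $F\setminus S$. Since $|S|\le k-1<k=\tau(\mathcal{G})$, the set $S$ is not a transversal of $\mathcal{G}$. Hence there is some $G\in\mathcal{G}$ with $G\cap S=\emptyset$.

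Next I use the cross-intersecting property. Every $F\in\widetilde{\mathcal{F}}(\widehat{S})\subset\mathcal{F}$ meets $G$, and since $G\cap S=\emptyset$, the set $F\setminus S$ meets $G$. So $G$ is a transversal of $\widetilde{\mathcal{F}}(S)$, and $G\subset X\setminus S$ by Lemma \ref{lem:twoone}; in any case $G\subset \bigcup_{G'\in\mathcal G}G' = X$. Each member of $\widetilde{\mathcal{F}}(S)$ therefore contains at least one of the $\ell$ vertices of $G$. This gives
\[
|\widetilde{\mathcal{F}}(S)| \le \sum_{x\in G} |\widetilde{\mathcal{F}}(S\cup\{x\})|,
\]
where I use that $|\widetilde{\mathcal{F}}(S\cup\{x\})|$ counts exactly the members of $\widetilde{\mathcal{F}}(\widehat{S})$ containing $x$. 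By averaging, some $x\in G\subset X- S$ satisfies $|\widetilde{\mathcal{F}}(S\cup\{x\})|\ge \ell^{-1}|\widetilde{\mathcal{F}}(S)|$.

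For the ``in particular'' part, I take $\widetilde{\mathcal{F}}=\mathcal{F}$ and $S=\emptyset$ (allowed since $0\in[0,k-1]$). This gives a vertex $x$ with $|\mathcal{F}(x)|\ge |\mathcal{F}|/\ell$, and since $|\mathcal{F}(x)|=|\mathcal{F}(\widehat{x})|\le\Delta(\mathcal{F})$, we get $|\mathcal{F}|\le \ell\Delta(\mathcal{F})$. The only real point is the existence of $G$ disjoint from $S$, which comes from $\tau(\mathcal{G})=k>|S|$; the rest is pigeonhole. I expect no serious obstacle.
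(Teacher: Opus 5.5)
Your proof is correct and is essentially the paper's argument: since $\tau(\mathcal{G})=k>|S|$, you pick $G\in\mathcal{G}$ with $G\cap S=\emptyset$, use the cross-intersecting property to get $|\widetilde{\mathcal{F}}(S)|\le\sum_{y\in G}|\widetilde{\mathcal{F}}(S\cup\{y\})|$, and average over the $\ell$ vertices of $G$. Your justification of $G\subset X-S$ is slightly garbled but correct: $G\subset X$ comes from Lemma~\ref{lem:twoone}, and then $G\cap S=\emptyset$ gives $G\subset X-S$; optimality of the pair is not actually needed.
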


\begin{proof} 
	Since $\tau(\mathcal{G})=k>|S|$, there exists $G\in \mathcal{G}$ such that $G \cap S =\emptyset $ . By the CI property, $\widetilde{\mathcal{F}}(\widehat{S})\subset \displaystyle \bigcup_{y\in G}\widetilde{\mathcal{F}}(\widehat{S\cup \{y\}})$, this implies $|\widetilde{\mathcal{F}}(S)|\leq \displaystyle \sum_{y\in G}|\widetilde{\mathcal{F}}(S\cup \{y\})|$. 
\end{proof}

	 Note that  $\Delta(\mathcal{H})=1$ if and only if $\mathcal{H}$ consists of pairwise disjoint edges. If $\Delta(\mathcal{F})=1$ , by $\tau(\mathcal{F})=\ell$, $|\mathcal{F}|=\ell$ and $|\mathcal{G}|=k^{\ell}$, then Theorems 1.5, 1.6 ,1.7 and 1.8 are valid. If $\Delta(\mathcal{G})=1$ , the same holds. So throughout the following, we always assume that
	\begin{equation}
\min\{\Delta(\mathcal{F}),\Delta(\mathcal{G})\}\geq 2.
\end{equation} 

\begin{lemma}\label{2.5} If $|S|< k $ and $\mathcal{F}(S)\neq \emptyset$, then $ \tau(\mathcal{G}(\overline{S}))= k-|S|$ and $\mathcal{F}(S)= \mathcal{T}(\mathcal{G}(\overline{S})$; \\If $|S|< \ell $ and $\mathcal{G}(S)\neq \emptyset$, then $ \tau(\mathcal{F}(\overline{S}))= \ell-|S|$ and $\mathcal{G}(S)= \mathcal{T}(\mathcal{F}(\overline{S})$.
\end{lemma}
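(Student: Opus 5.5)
The plan is to prove the first statement directly from the definitions together with Claim~\ref{claim:optimalpair}. The second statement is the same argument with the roles of $\mathcal{F}$ and $\mathcal{G}$ (and of $k$ and $\ell$) exchanged. Throughout, $\mathcal{T}(\cdot)$ refers to minimum-size transversals, with ground set $X$; by Lemma~\ref{lem:twoone}, $X$ is also the union of the members of $\mathcal{G}$.

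First I show $\tau(\mathcal{G}(\overline{S}))\le k-|S|$. Since $\mathcal{F}(S)\neq\emptyset$, pick $F\in\mathcal{F}$ with $S\subset F$. For any $G\in\mathcal{G}(\overline{S})$ we have $F\cap G\neq\emptyset$ by the CI property, and $G\cap S=\emptyset$. Hence $(F\setminus S)\cap G\neq\emptyset$, so $F\setminus S$ is a transversal of $\mathcal{G}(\overline{S})$ of size $k-|S|$. Note also that $\mathcal{G}(\overline{S})\neq\emptyset$, since otherwise $S$ would be a transversal of $\mathcal{G}$ of size $<k$.

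Next I show $\tau(\mathcal{G}(\overline{S}))\ge k-|S|$. If $T$ is a transversal of $\mathcal{G}(\overline{S})$, then $S\cup T$ is a transversal of $\mathcal{G}$: members of $\mathcal{G}$ meeting $S$ are hit by $S$, and the rest lie in $\mathcal{G}(\overline{S})$ and are hit by $T$. Therefore $|S|+|T|\ge\tau(\mathcal{G})=k$, so $\tau(\mathcal{G}(\overline{S}))=k-|S|$. The first step then shows every member of $\mathcal{F}(S)$ has size $k-|S|$ and is a transversal of $\mathcal{G}(\overline{S})$, giving $\mathcal{F}(S)\subseteq\mathcal{T}(\mathcal{G}(\overline{S}))$.

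For the reverse inclusion, take $T\in\mathcal{T}(\mathcal{G}(\overline{S}))$. I first check that $T\cap S=\emptyset$. No element of $S$ lies in any member of $\mathcal{G}(\overline{S})$, so deleting such an element from $T$ would leave a smaller transversal, contradicting minimality. Thus $F':=S\cup T$ has exactly $k$ elements. By the argument of the previous paragraph, $F'$ is a transversal of $\mathcal{G}$, so $F'\in\mathcal{T}(\mathcal{G})$. By Claim~\ref{claim:optimalpair}, $\mathcal{T}(\mathcal{G})=\mathcal{F}$, hence $F'\in\mathcal{F}$ and $T=F'\setminus S\in\mathcal{F}(S)$.

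The only delicate point is this use of maximality of the optimal pair (Claim~\ref{claim:optimalpair}) together with the disjointness $T\cap S=\emptyset$. The rest is bookkeeping.
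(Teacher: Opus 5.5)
Your proof is correct and follows essentially the same route as the paper. The lower bound on $\tau(\mathcal{G}(\overline{S}))$ comes from $S\cup T$ being a transversal of $\mathcal{G}$, the upper bound from $F\setminus S$ for $F\supseteq S$, and the reverse inclusion from $\mathcal{T}(\mathcal{G})=\mathcal{F}$ (Claim~\ref{claim:optimalpair}); your explicit justification of $T\cap S=\emptyset$ (by minimality, or equivalently from $k\le|T\cup S|\le|T|+|S|=k$) fills in a step the paper only asserts.
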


\begin{proof}
		By symmetry we only prove the first conclusion. Since $|S|<k$ and $\tau(\mathcal{G})=k$ , $\mathcal{G}(\overline{S})\neq \emptyset$ follows. For every $T \in \mathcal{T}(\mathcal{G}(\overline{S})$, $T\cup S$ is a transversal of $\mathcal{G}$. Hence $k=\tau(\mathcal{G})\leq |T\cup S|\leq |T|+|S|$ i.e. $|T|\geq k-|S|$. This implies $\tau(\mathcal{G}(\overline{S}))\geq k-|S|$. On the other hand, every  $A\in \mathcal{F}(S)\neq \emptyset$, by the CI property, is a transversal of $\mathcal{G}(\overline{S})$ whence $\tau(\mathcal{G}(\overline{S}))\leq |A|= k-|S|$ and $ \tau(\mathcal{G}(\overline{S}))= k-|S|$ follow. Hence, $A\in \mathcal{T}(\mathcal{G}(\overline{S})$ and  $T\cup S \in  \mathcal{T}(\mathcal{G})$ and $T\cap S=\emptyset$ . Using Claim 2.3,  $T\cup S \in  \mathcal{F}$ i.e. $ T\in  \mathcal{F}(S)$. So the conclusion holds.
\end{proof}

\begin{lemma}\label{2.6}
If $|S|\leq k $, then $|\mathcal{F}(S)| \leq {\ell}^{k-|S|}$ ; If $|S|\leq \ell $,then $|\mathcal{G}(S)| \leq {k}^{\ell-|S|}$.
\end{lemma}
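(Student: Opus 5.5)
The plan is to combine Lemma~\ref{2.5} with Gyárfás' bound (Theorem~\ref{thm:gyarfas}). By symmetry I will only treat the first inequality; the second follows by exchanging the roles of $\mathcal{F}$ and $\mathcal{G}$ and of $k$ and $\ell$.

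Let $S\subset X$ with $|S|\le k$. Two trivial cases come first. If $\mathcal{F}(S)=\emptyset$, the bound is clear. If $|S|=k$, then $\mathcal{F}(S)\subseteq\{\emptyset\}$, so $|\mathcal{F}(S)|\le 1=\ell^0$.

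So assume $|S|<k$ and $\mathcal{F}(S)\neq\emptyset$. Lemma~\ref{2.5} gives $\tau(\mathcal{G}(\overline{S}))=k-|S|$ and $\mathcal{F}(S)=\mathcal{T}(\mathcal{G}(\overline{S}))$. Here $\mathcal{G}(\overline{S})$ is a nonempty subfamily of the $\ell$-graph $\mathcal{G}$, hence itself an $\ell$-graph, and its covering number is $t=k-|S|$. Theorem~\ref{thm:gyarfas}, applied with $\ell$ in place of $k$, then yields
\[
|\mathcal{F}(S)|=|\mathcal{T}(\mathcal{G}(\overline{S}))|\le \ell^{\,k-|S|}.
\]

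There is no real obstacle. The only point needing care is that Lemma~\ref{2.5} uses the optimal-pair assumption (Claim~\ref{claim:optimalpair}), which is in force throughout this section. Even without optimality the inclusion $\mathcal{F}(S)\subseteq\mathcal{T}(\mathcal{G}(\overline{S}))$ would suffice: every member of $\mathcal{F}(S)$ is a transversal of $\mathcal{G}(\overline{S})$ of size $k-|S|$, and the argument in Lemma~\ref{2.5} shows $\tau(\mathcal{G}(\overline{S}))\ge k-|S|$, so these members are minimum transversals.
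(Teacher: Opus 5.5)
Your proof is correct and follows the paper's argument: you dispose of the cases $|S|=k$ and $\mathcal{F}(S)=\emptyset$, then combine Lemma~2.5 with Gy\'arf\'as' bound (1.5) applied to the $\ell$-graph $\mathcal{G}(\overline{S})$, whose covering number is $k-|S|$. Your closing remark is also right: the inclusion $\mathcal{F}(S)\subseteq\mathcal{T}(\mathcal{G}(\overline{S}))$ uses only the cross-intersecting property and $\tau(\mathcal{G})=k$, and it already gives the bound, so optimality of the pair is not actually needed for this lemma.
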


\begin{proof}
	 	By symmetry we only prove the first inequality. If $|S|=k$ or $\mathcal{F}(S)=\emptyset$, then the statement is true. Suppose $|S|<k$ and $\mathcal{F}(S)\neq \emptyset$. Then using (1.5) and Lemma 2.5, $|\mathcal{F}(S)|= |\mathcal{T}(\mathcal{G}(\overline{S}))|\leq {\ell}^{k-|S|}$.
\end{proof}
    More generally , we have the following
\begin{lemma}\label{} If $|\mathcal{F}(\overline{V};U)|> (t-1)\ell^{k-1-|U|} , 0\leq t \leq \ell-|V|$ , then $|\mathcal{G}(\overline{U};V)|\leq \binom{k+t-1-|U|}{t}k^{\ell-|V|-t}$.
\end{lemma}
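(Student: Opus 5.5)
We argue by branching, in the spirit of Gyárfás' proof of Theorem~\ref{thm:gyarfas} and Lemma~\ref{2.4}. Write $r=k-|U|$, $s=\ell-|V|$, $\mathcal{H}=\mathcal{F}(\overline{V};U)$ (a family of $r$-sets) and $\mathcal{B}=\mathcal{G}(\overline{U};V)$ (a family of $s$-sets). The case $\mathcal{B}=\emptyset$ is trivial, so assume $\mathcal{B}\ne\emptyset$; in particular $V\cup B\in\mathcal{G}$ for some $B$. The proof has three steps.

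\emph{Step 1: basic properties.} Take $F\in\mathcal{F}$ with $U\subset F$ and $F\cap V=\emptyset$, and $G\in\mathcal{G}$ with $V\subset G$ and $G\cap U=\emptyset$. Then $F\cap G\neq\emptyset$ forces $(F\setminus U)\cap(G\setminus V)\neq\emptyset$. Hence every $B\in\mathcal{B}$ is a transversal of $\mathcal{H}$.

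Next, for any point $y\notin U$ we have
\[
|\mathcal{H}(y)|\le|\mathcal{F}(U\cup\{y\})|\le \ell^{k-|U|-1}
\]
by Lemma~\ref{2.6}. So any $t-1$ points meet at most $(t-1)\ell^{k-1-|U|}<|\mathcal{H}|$ members of $\mathcal{H}$. This gives $\tau(\mathcal{H})\ge t$. More precisely, every set $P$ with $|P|\le t-1$ misses some member of $\mathcal{H}$.

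Finally, by Claim~\ref{claim:optimalpair}, $\mathcal{G}=\mathcal{T}(\mathcal{F})$. So each $B\cup V$ meets every $F\in\mathcal{F}$, and for $|P|+|V|<\ell$ there is $F\in\mathcal{F}$ disjoint from $P\cup V$, because $\tau(\mathcal{F})=\ell$.

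\emph{Step 2: building a search tree.} We build a tree whose leaves cover all of $\mathcal{B}$. We pick the elements of $B$ one at a time, keeping a set $P$ of already chosen elements of $B$.
\begin{itemize}
\item During the first $t$ steps ($|P|<t$), choose a member $A\in\mathcal{H}$ disjoint from $P$; Step~1 guarantees one exists. Since $B$ is a transversal of $\mathcal{H}$ and $A\cap P=\emptyset$, $B$ contains a new element of $A$.
\item During the remaining $s-t$ steps, choose $F\in\mathcal{F}$ disjoint from $P\cup V$. Then $B$ must contain a new element of $F$, which gives at most $k$ branches per step and the factor $k^{s-t}$.
\end{itemize}
If each of the first $t$ steps were counted naively, we would get $r^t$ branches instead of $\binom{r+t-1}{t}$.

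\emph{Step 3: the main obstacle, the binomial coefficient.} To obtain $\binom{r+t-1}{t}$ we refine the first $t$ steps. At each step, order $A$ and let $x$ be the \emph{first} element of $A$ lying in $B$. If $x$ is the $i$-th element of $A$, then the $i-1$ preceding elements of $A$ are known not to lie in $B$. We delete them from the ground set and replace $\mathcal{H}$ by its trace on the remaining points. The aim is to prove, by induction on $t$, that
\[
N(r,t)\le\sum_{i=1}^{r}N(r-i+1,\,t-1),\qquad N(r,0)=1 .
\]
By the hockey-stick identity $\binom{r+t-1}{t}=\sum_{j=1}^{r}\binom{j+t-2}{t-1}$, this gives $N(r,t)\le\binom{r+t-1}{t}$.

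The delicate point is to justify this recursion. After branching on the $i$-th element of $A$, the relevant residual edges must effectively have size at most $r-i+1$, while the hypothesis "any fewer than $t$ remaining choices miss some edge" must still hold. I would try to choose the next edge as a member of $\mathcal{H}$ avoiding $P$ together with the deleted points, using the counting bound $|\mathcal{H}(y)|\le\ell^{r-1}$ to ensure such edges survive. If this fails, I would instead try the alternative choice: fix a single edge $A_1$ and take the later edges from the link structure, which gives the shrinking sizes $r-i+1$ directly.

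Multiplying the $\binom{r+t-1}{t}$ leaves of the first phase by $k^{s-t}$ for the second phase gives the bound $|\mathcal{G}(\overline{U};V)|\le\binom{k+t-1-|U|}{t}\,k^{\ell-|V|-t}$.
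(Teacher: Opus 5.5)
\textbf{There is a genuine gap: the step that justifies your recursion is missing, and it is the whole content of the lemma.} Your recursion $N(r,t)\le\sum_{i=1}^{r}N(r-i+1,t-1)$ and its hockey-stick evaluation are exactly the count behind the paper's proof (nondecreasing index sequences $|U|<w_1\le\cdots\le w_t\le k$). But you leave the justification open, and neither of your fixes supplies it.

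\emph{First fix.} It cannot work. An edge chosen disjoint from the deleted points still has all $r$ of its points available to $B$, so you are back to $r$ branches.

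\emph{Step~1.} You trade the hypothesis $|\mathcal H|>(t-1)\ell^{r-1}$ for the qualitative fact that every set of at most $t-1$ points misses a member of $\mathcal H$. That fact alone cannot give $\binom{r+t-1}{t}$: $t$ pairwise disjoint $r$-sets satisfy it and have $r^t$ transversals of size $t$. So the \emph{size} hypothesis must be regenerated at every level of the recursion.

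\emph{Second fix.} A fixed edge $A_1=\{a_1,\dots,a_r\}$ and its links point the right way. But for an arbitrary edge and ordering, the family of members of $\mathcal H$ containing $a_1,\dots,a_{i-1}$ and avoiding $a_i$ can be empty or tiny. Then there is nothing to branch on at the next level.

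The missing idea is to build that edge greedily and to recurse on the lemma itself. Apply Lemma~2.4 with $\widetilde{\mathcal F}=\mathcal F(\overline V)$, starting from $S=U$, and choose $a_1,\dots,a_r$ so that each added point keeps at least a $1/\ell$ fraction. Then $|\mathcal F(\overline V;U\cup\{a_1,\dots,a_{i-1}\})|>(t-1)\ell^{r-i}$. Subtracting $|\mathcal F(\overline V;U\cup\{a_1,\dots,a_i\})|\le\ell^{r-i}$ (Lemma~2.6) gives
\[
|\mathcal F(\overline{V\cup\{a_i\}};U\cup\{a_1,\dots,a_{i-1}\})|>(t-2)\ell^{r-i},
\]
which is precisely the hypothesis of the lemma for $U'=U\cup\{a_1,\dots,a_{i-1}\}$, $V'=V\cup\{a_i\}$ and $t-1$.

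Every $G\in\mathcal G(\overline U;\widehat V)$ meets the edge $U\cup\{a_1,\dots,a_r\}\in\mathcal F$ but not $U$. Hence it contains a first $a_i$ and lies in $\mathcal G(\overline{U'};\widehat{V'})$, and these classes are disjoint. Induction on $t$ over all pairs $U,V$, with base case $t=0$ from Lemma~2.6, then gives your recursion with $k-|U'|=r-i+1$.

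Note that the points known to be absent from $G$ are put into $U$. That is, one conditions on $\mathcal F$-edges \emph{containing} them, rather than deleting them and passing to the trace.
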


\begin{proof}
	For a set $G$ and an ordered set $F=\{x_1,x_2,...,x_k\}$ define  
	$$h_G(F)=\min\{j:x_j\in G\}, F[i]=\{x_1,...,x_i\} \text{ and } F(i)=x_i \text{ for } i\in [k].$$
	Claim: if $|\mathcal{F}(\overline{V};U)|> (t-1)\ell^{k-1-|U|} , 0\leq t \leq \ell-|V|$  then there exists an injection $w:\mathcal{G}(\overline{U};\widehat{V}) \to [k]^{\ell-|V|}$ such that $|U|<w_1\leq w_2 \leq ...\leq w_t$.  
	
   Let $v=|V|, u=|U|$ and let us prove the claim using induction on $t$.	When $t=0$ , by Lemma 2.6 , we are done. Suppose $1\leq t  \leq \ell-v$ and  the claim holds for $t-1$.

	 Define the (ordered) set $F_1=\{x_1,x_2,...,x_k\}\in \mathcal{F}(\overline{V})$ by a greedy algorithm which is based on Lemma 2.4. First set $F_1[u]=\{x_1,...,x_u\}=U$  and choose $x_{u+1}$ to satisfy $|\mathcal{F}(\overline{V};F_1[u+1]) |=|\mathcal{F}(\overline{V}; x_1,...,x_{u+1}) |\geq \ell^{-1}|\mathcal{F}(\overline{V};F_1[u]) |$ . Then fix  $x_{u+2}$ to satisfy $|\mathcal{F}(\overline{V}; F_1[u+2]) |\geq \ell^{-1}|\mathcal{F}(\overline{V};F_1[u+1]) |$ and so on.
	 
	By the CI property define $w_1(G)=h_G(F_1)\in [k]$ for each $G\in \mathcal{G}(\overline{U};\widehat{V}) $. Clearly , $w_1(G)>u$. Note that for each $u< i \leq k$ , we have 
	$$|\mathcal{F}\left(\overline{V\cup \{F_1(i)\}};F_1[i-1] \right)|>(t-2)\ell^{k-i}$$
	because
	$$|\mathcal{F}\left(\overline{V}; F_1[i-1]\right)|>(t-1)\ell^{k-i} \text{ and } |\mathcal{F}\left(\overline{V}; F_1[i]\right)|\leq \ell^{k-i}.$$
	
	Therefore by the induction hypothesis , there exists an injection $$(w^i_2,...,w^i_{\ell-v}):\mathcal{G}\left(\overline{F_1[i-1]};\widehat{V\cup \{F_1(i)\}}\right) \to [k]^{\ell-v-1}$$ such that $i\leq w^i_2(G) \leq ...\leq w^i_t(G)$  for $G\in \mathcal{G}\left(\overline{F_1[i-1]};\widehat{V\cup \{F_1(i)\}}\right)  $.  
	Since $\mathcal{G}(\overline{U};\widehat{V})$  is the disjoint union of $\mathcal{G}\left(\overline{F_1[i-1]};\widehat{V\cup \{F_1(i)\}}\right),u<i\leq k$ , we can define  $(w_2,...,w_{\ell-v}):\mathcal{G}(\overline{U};\widehat{V}) \to [k]^{\ell-v-1}$ such that $(w_1,...,w_{\ell-v}) $ is injective and $u<w_1\leq w_2 \leq ...\leq w_t$.  

    So $|\mathcal{G}(x)|\leq \binom{k+t-1-u}{t}k^{\ell-v-t}$, completing the proof.
\end{proof}

For a $h$-graph $\mathcal{H}$ define 
$$N_s(\mathcal{H}) = |\{T : |T| = s, T \text{ is a minimal(for containment) transversal of } \mathcal{H}\}|.$$

\begin{lemma}\label{} Suppose $x\in X$,then 
		\[
		 |\mathcal{F}(\overline{x})|\leq \displaystyle \sum_{s=1}^{k}{\ell }^{k-s} N_s(\mathcal{G}(x)),
		 \]
		 \[
   |\mathcal{G}(\overline{x})|\leq \displaystyle \sum_{s=1}^{\ell}k^{\ell-s} N_s(\mathcal{F}(x)).
	\]
	
\end{lemma}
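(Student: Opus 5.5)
The plan is a union bound: every $F\in\mathcal{F}(\overline{x})$ contains some minimal transversal of $\mathcal{G}(x)$, and for each such transversal $T$ we count the edges of $\mathcal{F}$ through $T$ with Lemma 2.6. By symmetry we only prove the first inequality; the second follows by exchanging the roles of $\mathcal{F},\mathcal{G}$ and of $k,\ell$.

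First we check that $\mathcal{G}(x)$ is a non-empty $(\ell-1)$-graph without the empty set. Since $x\in X$, Lemma 2.1 gives some $G\in\mathcal{G}$ with $x\in G$, so $\mathcal{G}(x)\neq\emptyset$. Since $\ell\geq 2$, every member $G\setminus\{x\}$ of $\mathcal{G}(x)$ is non-empty. Hence every transversal of $\mathcal{G}(x)$ has size at least $1$, which is why the sum starts at $s=1$.

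Next, take any $F\in\mathcal{F}(\overline{x})$, so $x\notin F$. For every $G\in\mathcal{G}(\widehat{x})$ the CI property gives $F\cap G\neq\emptyset$, and since $x\notin F$ we get $F\cap (G\setminus\{x\})\neq\emptyset$. Thus $F$ is a transversal of $\mathcal{G}(x)$. Consequently $F$ contains a minimal (for containment) transversal $T$ of $\mathcal{G}(x)$, and $s=|T|$ satisfies $1\leq s\leq |F|=k$.

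To conclude, for each $F\in\mathcal{F}(\overline{x})$ fix one such $T_F\subset F$. Then
\[
|\mathcal{F}(\overline{x})|\leq \sum_{T}|\{F\in\mathcal{F}: T\subset F\}|=\sum_{T}|\mathcal{F}(T)|,
\]
where $T$ ranges over the minimal transversals of $\mathcal{G}(x)$ of size at most $k$. By Lemma 2.6 (applicable since $|T|\leq k$) we have $|\mathcal{F}(T)|\leq \ell^{k-|T|}$. Grouping the terms by $s=|T|$ gives exactly $\sum_{s=1}^{k}\ell^{k-s}N_s(\mathcal{G}(x))$.

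There is no real obstacle here. The only points needing care are that $x\notin F$, so that $F$ hits the truncated sets $G\setminus\{x\}$, and that minimal transversals larger than $k$ cannot lie inside any $F$ and may therefore be dropped.
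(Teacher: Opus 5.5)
Your proposal is correct and is essentially the paper's argument. The paper double counts pairs $(B,T)$ with $B\in\mathcal{G}(\overline{x})$ and $T\subset B$ a minimal transversal of $\mathcal{F}(x)$ of size $s$, bounding the number of members through each $T$ by $k^{\ell-s}$ via Lemma 2.6; this is the same as your union bound, applied to the symmetric inequality. Your check via Lemma 2.1 and $\ell\geq 2$ that $\mathcal{G}(x)$ is non-empty with non-empty members is a useful detail the paper leaves implicit: it rules out the empty set as a minimal transversal, which would otherwise contribute an uncounted $s=0$ term.
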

\begin{proof}
	By symmetry we only prove the second inequality. Using double counting for the number of ordered pairs $(B,T)$ , where  $B\in \mathcal{G}(\overline{x})$ and $T \text{ is a minimal transversal of } \mathcal{F}(x)$ of size at most $\ell$. By cross-intersecting property every $B\in \mathcal{G}(\overline{x})$ contains a transversal whence also a minimal transversal of $\mathcal{F}(x)$. On the other hand, using Lemma 2.6 , every minimal transversal of $\mathcal{F}(x)$ of size $s\leq \ell$ is contained in at most $k^{\ell-s}$ members of $\mathcal{G}$. By $\mathcal{G}(\overline{x})\subset \mathcal{G}$ the inequality follows.
\end{proof}

	\section{Proof of Theorem 1.5}
	
	Let $\mathcal{F}, \mathcal{G}$ form an optimal pair. Using (1.5) and Claim  2.3, $|\mathcal{F}| = |\mathcal{T}(\mathcal{G})| \leq \ell^k$ and $|\mathcal{G}| = |\mathcal{T}(\mathcal{F})| \leq k^\ell$, implying the upper bound.
	
	To prove the lower bound we give a construction. For pairwise disjoint sets $Y_1, \ldots, Y_r$ define 
	\[
	Y_1 \times \cdots \times Y_r = \{ \{y_1, \ldots, y_r\} : y_i \in Y_i, 1 \leq i \leq r \}.
	\]
	
	Fix $k-1$ pairwise disjoint $\ell$-sets $A_1, \ldots, A_{k-1}$ and $\ell-1$ pairwise disjoint $k$-sets $B_1, \ldots, B_{\ell-1}$ so that $A_i \cap B_j \neq \varnothing$ for all $1 \leq i \leq k-1$, $1 \leq j \leq \ell-1$.
	
	(This is always possible. E.g., one considers $(k-1) \times (\ell-1)$ vertices arranged in a grid. Let $\widetilde{A}_1, \ldots, \widetilde{A}_{k-1}$ be the rows and $\widetilde{B}_1, \ldots, \widetilde{B}_{\ell-1}$ be the columns. Then add an extra vertex to each of the $(k-1)+(\ell-1)$ sets without destroying the required disjointness.)
	
	Fix an extra vertex $y$. Set $	\mathcal{A} = \{A_1, \ldots, A_{k-1}\}$, $	\mathcal{B} = \{B_1, \ldots, B_{\ell-1}\}$ and define
	\[
	\mathcal{F} = 	\mathcal{B} \cup (A_1 \times \cdots \times A_{k-1} \times \{y\}), \quad
	\mathcal{G} = 	\mathcal{A} \cup (B_1 \times \cdots \times B_{\ell-1} \times \{y\}).
	\]
	
	It should be clear that $\mathcal{F}$ is a $k$-graph, $\mathcal{G}$ is a $\ell$-graph, $|\mathcal{F}| =\ell^{k-1}+\ell-1$, $|\mathcal{G}| = k^{\ell-1}+k-1$, and $\mathcal{F}, \mathcal{G}$ are cross-intersecting. We need to prove $\tau(\mathcal{F}) = \ell$ and $\tau(\mathcal{G}) = k$.
	
	By symmetry let us prove it for $\mathcal{F}$. Let $T$ be a transversal of $\mathcal{F}$. There are two cases to consider.
	
	If $y \in T$, then $T$ intersects each of the $\ell$ pairwise disjoint sets $B_1, \ldots, B_{\ell-1}$ and $\{y\}$. Hence $|T| \geq \ell$.
	
	If $y \notin T$, then $T \cap \{a_1, \ldots, a_{k-1}\} \neq \varnothing$ for all $\ell^{k-1}$ edges of $A_1 \times \cdots \times A_{k-1}$, which forces that $A_j\subset T$ for some $1\leq j<k$. Hence $|T| \geq \ell$.
	
	Thus $\mathcal{F}, \mathcal{G}$ satisfy all requirements, concluding the proof. 
	
	\section{Proof of Theorem 1.6}
Let $\mathcal{F}, \mathcal{G}$ be a $(k,\ell)$ optimal pair, then by Theorem 1.5 , $|\mathcal{F}||\mathcal{G}|>\ell^{k-1}k^{\ell-1}$.

 The next lemma is inspired by Lemma 2 of \cite{Arman} .
	\begin{lemma}
		If $ |\mathcal{F}(\overline{S})|\geq (t-1)\Delta(\mathcal{F})+ \ell^{2k/3+1}$ , $t \leq \ell -|S|-1$ , then 
		$$|\mathcal{G}(S)|\leq (\frac{2\sqrt{2}}{3})^t k^{\ell-|S|}.$$
	\end{lemma}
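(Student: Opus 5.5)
Throughout, write $s=|S|$ and $\Delta=\Delta(\mathcal{F})$. The proof will be an induction on $t$ that follows the scheme of Lemma 2.7, with one change: the counting of monotone sequences is replaced by a direct multiplicative saving. For $G\in\mathcal{G}(\widehat S)$, the set $G\setminus S$ is an $(\ell-s)$-set meeting every member of $\mathcal{F}(\overline S)$. By Lemma 2.5, $\mathcal{G}(S)=\mathcal{T}(\mathcal{F}(\overline S))$ and $\tau(\mathcal{F}(\overline S))=\ell-s$. The base case $t=0$ is Lemma 2.6, which gives $|\mathcal{G}(S)|\le k^{\ell-s}$.

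For the induction step I would first choose a good edge. Since $|\mathcal{F}(\overline S)|$ is large, I pick $F\in\mathcal{F}(\overline S)$ and split $\mathcal{G}(S)$ according to the first vertex of $F$ that each set contains. Order $F=\{x_1,\dots,x_k\}$, where $F$ is built greedily as in Lemma 2.7 using Lemma 2.4. Then
\[
|\mathcal{G}(S)|=\sum_{i=1}^{k}\bigl|\mathcal{G}(\overline{\{x_1,\dots,x_{i-1}\}};S\cup\{x_i\})\bigr|.
\]
For each $i$, deleting the edges through $x_i$ loses at most $\Delta$ members. So the family $\mathcal{F}(\overline{S\cup\{x_i\}})$ has size at least $(t-2)\Delta+\ell^{2k/3+1}$, and the induction hypothesis applies to $S\cup\{x_i\}$ with $t-1$ (note $t-1\le \ell-(s+1)-1$). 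Restricting further to sets avoiding $x_1,\dots,x_{i-1}$ is what produces the saving. Lemma 2.7 handles this by forcing monotonicity, but I would instead bound the $i$-th term by a quantity of order $\min\{k^{\ell-s-1},\,(\text{something decaying in } i)\}$.

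The decay comes from the threshold $\ell^{2k/3+1}$. Take $S'=S\cup\{x_1,\dots,x_{i-1}\}$. Sets in $\mathcal{G}(S\cup\{x_i\})$ that avoid $x_1,\dots,x_{i-1}$ must, after removing $x_i$, be transversals of $\mathcal{F}(\overline{S\cup\{x_i\}})$ restricted to edges meeting $\{x_1,\dots,x_{i-1}\}$. I would use Lemma 2.8 (the minimal-transversal double count) or Lemma 2.6 to control these. For $i\le 2k/3$, the greedy choice guarantees that $\mathcal{F}(\overline S;\{x_1,\dots,x_{i}\})$ still has at least $\ell^{-i}|\mathcal{F}(\overline S)|\ge \ell^{2k/3+1-i}$ members. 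This lets the induction hypothesis be applied inside the link, so the first $2k/3$ terms each carry the factor $(\frac{2\sqrt2}{3})^{t-1}k^{\ell-s-1}$. For $i>2k/3$, I would argue that these terms together contribute at most a $\frac{1}{3}$-fraction-type portion, since sets whose first hit on $F$ lies in the last third are constrained. Summing the two ranges should give roughly
\[
\Bigl(\tfrac{2}{3}+\tfrac{1}{3}\cdot\theta\Bigr)k\cdot\bigl(\tfrac{2\sqrt2}{3}\bigr)^{t-1}k^{\ell-s-1},
\]
and $\frac{2\sqrt2}{3}$ is the constant I would aim to extract from this balance. One natural route is two rounds of splitting, giving a factor $\frac{8}{9}=\bigl(\frac{2\sqrt2}{3}\bigr)^2$ per two steps. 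The square root suggests the true induction gains $\frac89$ every two levels, so I may need to induct in steps of $2$, or gain $\frac{2\sqrt2}{3}$ via Cauchy–Schwarz.

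The main obstacle is making the saving rigorous: showing that a constant fraction of the $k$ branches really is reduced. The hard point is that sets first hitting $F$ late, at positions above $2k/3$, are rare, or equivalently that early branches cannot all be full of size $k^{\ell-s-1}$. I would try to prove it by choosing a second edge $F'$ disjoint from the first $2k/3$ vertices of $F$. Such an edge exists because $|\mathcal{F}(\overline S)|$ exceeds the at most $\ell^{k/3}\cdot(\dots)$ edges that Lemma 2.6 allows through those vertices, and this is exactly where $\ell^{2k/3+1}$ enters. I would then count the sets hitting both edges late, which gives a $\frac13\cdot\frac13$ versus $\frac23$ trade-off and the factor $\frac89$ over two levels.
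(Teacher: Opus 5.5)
Your set-up is sound: you split $\mathcal{G}(S)$ by the first vertex of a greedily built edge, recurse, pay $\Delta(\mathcal{F})$ per deleted vertex, and aim for $\frac89$ per two levels. The step that actually produces the saving, however, is missing.

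As written, your induction $t\to t-1$ bounds all $k$ branches by the same quantity $(\frac{2\sqrt2}{3})^{t-1}k^{\ell-|S|-1}$, so it gains nothing. The factor $\theta<1$ for the late branches is asserted, not derived. The mechanism you suggest for it also fails:
\begin{itemize}
\item A second edge $F'$ disjoint from the first $2k/3$ vertices of $F$ carries no information. A set $G$ whose first hit on $F$ is at position $i>2k/3$ is only known to avoid $x_1,\dots,x_{i-1}$. Since $F'$ contains none of the early ones, nothing forces $G$ to hit $F'$ late.
\item Your existence argument for $F'$ is also off. Lemma 2.6 bounds the edges \emph{containing} a set, not the edges \emph{meeting} it. The edges meeting $2k/3$ given vertices can number up to $(2k/3)\Delta(\mathcal{F})$, while the hypothesis only lets you discard the edges through $t-1$ vertices.
\end{itemize}

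The paper does the opposite: the second edge depends on $i$ and \emph{begins} with vertices that $G$ is known to avoid.
\begin{itemize}
\item Put $m=\lfloor k/3\rfloor+1$ and choose $x_1,\dots,x_m$ greedily by Lemma 2.4. This gives $|\mathcal{F}(\overline S;x_1,\dots,x_m)|\ge \ell^{2k/3+1-m}$.
\item Let $P$ be the common intersection of the members of this link. Lemma 2.6 bounds the link by $\ell^{k-m-|P|}$, so $|P|\le k/3-1$. You can therefore order $F_1$ so that $\{x_1,\dots,x_m\}\cup P$ lies in its first $\lfloor 2k/3\rfloor$ positions.
\item For every $i>2k/3$ we then have $x_i\notin P$. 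Hence there is $F_2^i\in\mathcal{F}(\overline{S\cup\{x_i\}})$ whose first $m$ vertices are $x_1,\dots,x_m$.
\item Any $G$ first hitting $F_1$ at such an $i$ avoids that prefix, so it first hits $F_2^i$ at some position $j>m$.
\end{itemize}
The admissible pairs $(i,j)$ therefore number at most $\frac23k\cdot k+\frac13k\cdot\frac23k=\frac89k^2$, and
\[
|\mathcal{G}(S)|\le\sum_{(i,j)}\bigl|\mathcal{G}(S\cup\{F_1(i),F_2^i(j)\})\bigr| .
\]
Each new set $S\cup\{F_1(i),F_2^i(j)\}$ removes at most $2\Delta(\mathcal{F})$ edges. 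So the induction genuinely goes from $t$ to $t-2$; this is how the term $(t-1)\Delta(\mathcal{F})$ and the condition $t\le\ell-|S|-1$ are used.

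The threshold $\ell^{2k/3+1}$ does not serve to produce an edge avoiding many vertices, nor to apply the induction hypothesis inside links. Its role is to force the core $P$ to be small, which is what makes the edges $F_2^i$ exist.
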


\begin{proof}
		Claim: if $ |\mathcal{F}(\overline{S})|\geq (t-1)\Delta(\mathcal{F})+ \ell^{2k/3+1}$ , $t \leq \ell -|S|-1$ then there exists an injection $w:\mathcal{G}(S) \to [k]^{\ell-|S|}$ such that for each odd number $1\leq i \leq t$, if $w_i>\frac{2k}{3}$ then $w_{i+1}>1+k/3$.
		
		 Let $s=|S|$ and let us prove the claim using induction on $t$.	When $t\leq 0$ , by Lemma 2.6 , we are done. Suppose $1\leq t \leq \ell -|S|-1$ and  the claim holds for $\leq t-1$. 
		 
		 Since $|\mathcal{F}(\overline{S})|\geq \ell^{2k/3+1}$, by Lemma 2.4 we can define the $x_j,1\leq j\leq 1+k/3$ to satisfy   $|\mathcal{F}(\overline{S};x_1,...,x_j) |\geq \ell^{-1}|\mathcal{F}(\overline{S};x_1,...,x_{j-1})|$.
		 
		Let $P:= \bigcap \mathcal{F}(\overline{S};x_1,...,x_{\lfloor k/3 \rfloor +1})$, then 
		$$\ell^{1+2k/3-\lfloor k/3 \rfloor-1} \leq  |\mathcal{F}(\overline{S};x_1,...,x_{\lfloor k/3 \rfloor +1})|=|\mathcal{F}(\overline{S};\{x_1,...,x_{\lfloor k/3 \rfloor +1}\}\cup P)|\leq \ell^{k-\lfloor k/3 \rfloor-1-|P|}$$
		hence $|P|\leq k/3 -1$. So we can define an ordered set $F_1=\{x_1,...,x_k\}$ such that $P\subset F_1[\lfloor 2k/3 \rfloor]$. Define $w_1(G)=h_G(F_1)$ for $G\in \mathcal{G}(S)$.
		
		For any $i > \frac{2k}{3}$ , $x_i\notin P$ . By definition , we can fix a $F^{i}_2\in  \mathcal{F}(\overline{S\cup \{x_i\}})$  such that  $F^{i}_2[\lfloor k/3 \rfloor +1]=F_1[\lfloor k/3 \rfloor +1]$ . For any $i\leq \frac{2k}{3}$, we simply fix a $F^{i}_2\in  \mathcal{F}(\overline{S\cup \{x_i\}})$.	Define $w_2(G)=h_G(F^{w_1(G)}_2)\in [k]$ for each $G\in \mathcal{G}$. Clearly, if $w_1(G)>\frac{2k}{3}$ then $w_{2}(G)>1+k/3$.
		
		For each pair $(i,j)$ which satisfies if $i>\frac{2k}{3}$ then $j>1+k/3$, we have 
		$$|\mathcal{F}(\overline{S\cup\{F_1(i),F^i_2(j)\}})|\geq |\mathcal{F}(\overline{S})|-2\Delta(\mathcal{F})\geq (t-3)\Delta(\mathcal{F})+\ell^{2k/3+1}.$$
		Therefore by the induction hypothesis , there exists an injection $$(w^{(i,j)}_3,...,w^{(i,j)}_{\ell-s}):\mathcal{G}(\widehat{S\cup \{F_1(i),F^i_2(j)\}})\to [k]^{\ell-s-2}$$
		such that for each odd number $3\leq p \leq t$, if $w^{(i,j)}_p(G)>\frac{2k}{3}$ then $w^{(i,j)}_{p+1}(G)>1+k/3$ .
		
		Since $\mathcal{G}(\widehat{S})$  is the disjoint union of $\mathcal{G}(\widehat{S\cup \{F_1(i),F^i_2(j)\}})$  , we can define  $(w_3,...,w_{\ell-s}):\mathcal{G}(\widehat{S}) \to [k]^{\ell-s-2}$ such that $(w_1,...,w_{\ell-s}) $ is injective and for each odd number $ p \in [t]$, if $w_p(G)>\frac{2k}{3}$ then $w_{p+1}(G)>1+k/3$ .
		
		So $|\mathcal{G}(S)|\leq  (\frac{8}{9}k^2)^{\lceil t/2 \rceil}k^{\ell-s-2\lceil t/2 \rceil}\leq  (\frac{2\sqrt{2}}{3})^tk^{\ell-s}$, completing the proof.
\end{proof}

	The following improvement of Gyárfás Theorem is kind of folklore.

\begin{lemma}
	For any $k$-graph $\mathcal{F}$ with $\tau(\mathcal{F}) = t$ and $|\mathcal{F}|>t$,
	\begin{equation}
		|\mathcal{T}(\mathcal{F})| \leq (k^2-k+1)k^{t-2}
	\end{equation}
\end{lemma}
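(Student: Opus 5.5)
The plan is to refine the branching-tree argument behind Gyárfás' Theorem~\ref{thm:gyarfas} by starting the tree with two \emph{intersecting} edges instead of one. First I will find such a pair. If the members of $\mathcal{F}$ were pairwise disjoint, then $\tau(\mathcal{F})=|\mathcal{F}|>t$, which is impossible. Hence there are distinct $F_1,F_2\in\mathcal{F}$ with $I:=F_1\cap F_2\neq\varnothing$. Put $i=|I|$, so that $1\le i\le k-1$.

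Next I recall the basic tool, the branching tree. For any $S$ with $|S|<t$, $S$ is not a transversal, so some edge of $\mathcal{F}$ is disjoint from $S$. Branch on its at most $k$ vertices and repeat until depth $t$. Every $T\in\mathcal{T}(\mathcal{F})$ follows one root-to-leaf path: at each node it contains some vertex of the chosen edge, and we take, say, its first such vertex in a fixed ordering. The leaf reached is a $t$-subset of $T$, hence equals $T$. So $T\mapsto$ leaf is injective, and a subtree rooted at a node of depth $d$ contributes at most $k^{t-d}$ transversals.

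Now I split $\mathcal{T}(\mathcal{F})$ according to how $T$ meets $F_1$ and $F_2$, ordering $F_1$ so that the vertices of $I$ come first.
\begin{itemize}
\item \emph{Case $T\cap I\neq\varnothing$.} Branch first on $F_1$, taking the first vertex of $T$ in $F_1$, which lies in $I$. There are $i$ such depth-one nodes, each with at most $k^{t-1}$ leaves, giving at most $i\,k^{t-1}$ transversals.
\item \emph{Case $T\cap I=\varnothing$.} Then $T$ contains some $y\in F_1\setminus I$, with $k-i$ choices, and some $z\in F_2\setminus I$, also $k-i$ choices. Here $y\neq z$, and $\{y\}$ does not meet $F_2$, so branching on $F_2$ at the node $\{y\}$ is legitimate. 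This gives at most $(k-i)^2$ nodes of depth two, each with at most $k^{t-2}$ leaves. If $t=1$ this case is empty, because the unique vertex of $T$ must hit both $F_1$ and $F_2$.
\end{itemize}

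Adding the two cases,
\[
|\mathcal{T}(\mathcal{F})|\le \bigl(ik+(k-i)^2\bigr)k^{t-2}=\bigl(k^2-ik+i^2\bigr)k^{t-2}.
\]
It remains to check that $k^2-ik+i^2\le k^2-k+1$. This is equivalent to $(i-1)\bigl(i-(k-1)\bigr)\le 0$, which holds for $1\le i\le k-1$. That gives the claimed bound.

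The step needing the most care is the injectivity and bookkeeping in the second case: making sure each $T$ is assigned a unique depth-two node and then a unique leaf. The first-vertex rule in the fixed orderings of $F_1$ and $F_2$ handles this. It also keeps the two cases disjoint, so that no transversal is counted twice. This is also why the argument branches on all of $I$ rather than on a single vertex $x\in I$. Branching only on $x$ would let $y\in F_1\setminus\{x\}$ also lie in $F_2$, and such $y$ would produce subtrees of size $k^{t-1}$, which spoils the bound.
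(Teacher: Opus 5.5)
Your proof is correct and is essentially the paper's argument: the same split of $\mathcal{T}(\mathcal{F})$ into transversals meeting $F\cap F'$ (at most $k^{t-1}$ each) and transversals containing a pair $\{y,z\}$ with $y\in F\setminus F'$, $z\in F'\setminus F$ (at most $k^{t-2}$ each), followed by the same inequality $k^2-ik+i^2\le k^2-k+1$. The one difference is that you re-derive the $k^{t-1}$ and $k^{t-2}$ bounds directly from the branching tree, while the paper cites Gyárfás' bound (1.5) for the links $\mathcal{T}(\mathcal{F})(x_i)$ and $\mathcal{T}(\mathcal{F})(y_i,z_j)$; your version also makes the two cases disjoint, which the paper's union bound does not need.
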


\begin{proof}
	Since $|\mathcal{F}|>t , \tau(\mathcal{F}) = t$ , we may fix $F, F' \in \mathcal{F}$  with $1\leq |F\cap F'| <k $. Set $|F\cap F'|=s$ and  $F=\{x_1,..,x_s,y_{1},...,y_{k-s}\} , F'=\{x_1,..,x_s,z_{1},...,z_{k-s}\}$ . Note that for any set $H$ intersecting both $F$ and $F'$ either $x_i\in H$ for some $1\leq i\leq s$ or $H$ contains (at least) one of the pairs $\{y_i,z_j\}, 1\leq i,j \leq k-s$.
	
	By (1.5), $|\mathcal{T}(\mathcal{F})(x_i)|\leq k^{t-1}$ and $|\mathcal{T}(\mathcal{F})(y_i,z_j)|\leq k^{t-2}$.
	Hence $|\mathcal{T}(\mathcal{F})|\leq sk^{t-1}+(k-s)^2k^{t-2} =(k^2-ks+s^2)k^{t-2}$. As $k^2-ks+s^2\leq k^2-k+1$ for $1\leq s\leq k-1$, (4.1) follows.
\end{proof}
     
     Now we use Lemmas 2.7 , 4.1 and 4.2 to prove Theorem 1.6:
\begin{proof}
	(1) Suppose $(t-1)\ell^{k-1}<|\mathcal{F}|\leq t\ell^{k-1}, t \in \mathbb{Z}_+$, by Lemma 2.7, $|\mathcal{G}|\leq \binom{k+t-1}{t}k^{\ell-t}$. Hence $|\mathcal{F}||\mathcal{G}|\leq  t\ell^{k-1}\binom{k+t-1}{t}k^{\ell-t}=\ell^{k-1}k^{\ell}[t\binom{k+t-1}{t}k^{-t}]\leq \ell^{k-1}k^{\ell}(1+\frac{1}{k})=\ell^{k-1}k^{\ell-1}(1+k)$, because $t\binom{k+t-1}{t}k^{-t}=\frac{(1+\frac{t-1}{k})(1+\frac{t-2}{k})...(1+\frac{1}{k})}{(t-1)!}=(1+\frac{1}{k}) \displaystyle \prod_{i=2}^{t-1}\frac{1+\frac{i}{k}}{i} \leq 1+\frac{1}{k}$. Similarly, $|\mathcal{F}||\mathcal{G}|\leq \ell^{k-1}k^{\ell-1}(1+\ell)$. This proves (1.7).
	
	(2)	Suppose that $\log k <\ell <e^k$ which will be used in Claim 4.5.
	\begin{claim}\label{} 
		$\Delta(\mathcal{F})> \ell^{1+2k/3}, \Delta(\mathcal{G}) > k^{1+2\ell/3}$ 
	\end{claim}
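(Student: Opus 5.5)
By symmetry it suffices to show $\Delta(\mathcal{F})>\ell^{1+2k/3}$; the same argument with the roles of $(\mathcal{F},k)$ and $(\mathcal{G},\ell)$ exchanged gives the second inequality. I argue by contradiction: suppose $\Delta(\mathcal{F})\le \ell^{1+2k/3}$. The lower bound of Theorem 1.5 gives
\[
|\mathcal{F}||\mathcal{G}|\ \ge\ (\ell^{k-1}+\ell-1)(k^{\ell-1}+k-1)\ >\ \ell^{k-1}k^{\ell-1}.
\]
Combining this with Lemma 2.6 ($|\mathcal{G}|\le k^\ell$) yields $|\mathcal{F}|>\ell^{k-1}/k$. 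By Lemma 2.4, $|\mathcal{F}|\le \ell\Delta(\mathcal{F})\le \ell^{2+2k/3}$. These two bounds already conflict when $\ell^{k/3-3}>k$.

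That conflict does not cover all admissible $(k,\ell)$, so I will instead play small $\Delta(\mathcal{F})$ against Lemma 4.1 with $S=\emptyset$. Its hypothesis $|\mathcal{F}|\ge (t-1)\Delta(\mathcal{F})+\ell^{2k/3+1}$ holds for
\[
t-1=\left\lfloor \frac{|\mathcal{F}|-\ell^{2k/3+1}}{\Delta(\mathcal{F})}\right\rfloor,
\]
which is roughly $|\mathcal{F}|/\ell^{1+2k/3}$ and hence large. We may cap $t$ at $\ell-1$. Lemma 4.1 then gives
\[
|\mathcal{G}|\le \Bigl(\tfrac{2\sqrt2}{3}\Bigr)^{t}k^{\ell}.
\]
Combined with $|\mathcal{F}|\le \ell^k$ (Lemma 2.6), this gives $|\mathcal{F}||\mathcal{G}|\le (2\sqrt2/3)^t\ell^k k^\ell$. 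We reach a contradiction with $|\mathcal{F}||\mathcal{G}|>\ell^{k-1}k^{\ell-1}$ as soon as $(3/(2\sqrt2))^t>k\ell$, i.e. $t\gtrsim 17\log(k\ell)$.

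To use the sharper information, I will first get a two-sided estimate on $|\mathcal{F}|$. Write $|\mathcal{F}|=x\ell^{k-1}$ and $|\mathcal{G}|=yk^{\ell-1}$ with $xy>1$. By the computation in part (1) of the proof of Theorem 1.6 (Lemma 2.7), $|\mathcal{F}|$ cannot be much larger than $\ell^{k-1}$ without forcing $|\mathcal{G}|$ small. The general bound from (1) still leaves $x$ anywhere between about $1/k$ and $\ell$. So I split into two cases.

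\emph{Case $x\ge 1/\ell^{c}$ for a small constant $c$.} Then $t\approx x\ell^{k-2-2k/3}$, which is at least $\ell^{k/3-2-c}$. This exceeds $17\log(k\ell)$ because $\ell>\log k$ and $k$ is large. The constraint $t\le\ell-1$ is handled by simply taking $t=\ell-1$, and then $(3/2\sqrt2)^{\ell-1}>k\ell$ needs $\ell\gtrsim 17\log k$. This is the main obstacle: the hypothesis only gives $\ell>\log k$, and the constant from $2\sqrt2/3$ does not obviously match.

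\emph{Case $x$ small.} Here $|\mathcal{G}|$ must be near $k^{\ell}$, and I would use Lemma 4.2 in the form $|\mathcal{G}|=|\mathcal{T}(\mathcal{F})|\le(1-1/k+1/k^2)k^\ell$ to bound $y$ and hence bound $x$ from below.

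If the constants fail in the window $\log k<\ell<17\log k$, I would first try iterating the bound by replacing $k^\ell$ with a sharper estimate of $|\mathcal{G}|$ obtained from Lemma 2.7 applied with $t$ determined by $|\mathcal{F}|/\ell^{k-1}$. Multiplying the two savings should then give the contradiction; this is the step I expect to be hardest.
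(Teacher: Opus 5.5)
\textbf{Verdict: gap, but self-inflicted.} Your first paragraph is already a complete proof, and it is exactly the paper's argument. Lemma 2.4 gives $|\mathcal{F}|\le \ell\Delta(\mathcal{F})\le \ell^{2+2k/3}$, and Lemma 2.6 gives $|\mathcal{G}|\le k^{\ell}$. Hence $|\mathcal{F}||\mathcal{G}|\le \ell^{2+2k/3}k^{\ell}$, which is below $\ell^{k-1}k^{\ell-1}$ as soon as $k\le \ell^{k/3-3}$.

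The gap is that you then discard this argument, asserting that it ``does not cover all admissible $(k,\ell)$''. That assertion is false. Since $\ell\ge 2$ and $k\ge 9$, we have $\ell^{k/3-3}\ge 2^{k/3-3}$. Moreover $2^{k/3-3}>k$ for every $k\ge 24$: at $k=24$ it reads $32>24$, and $2^{k/3-3}/k$ is increasing for $k\ge 5$. So the conflict holds for every $\ell\ge 2$ once $k$ is large, which is exactly the regime $k,\ell\ge N$ of Theorem 1.6. Symmetrically, $k^{\ell/3-3}\ge 2^{\ell/3-3}>\ell$ for $\ell\ge 24$ gives the bound on $\Delta(\mathcal{G})$. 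The hypothesis $\log k<\ell<e^k$ plays no role in this claim; the paper only needs it later.

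Everything after your first paragraph is unnecessary, and as written it does not close. Capping $t$ at $\ell-1$ in Lemma 4.1 saves only a factor $(2\sqrt2/3)^{\ell-1}$. That factor beats $k\ell$ only when $\ell$ exceeds roughly $17\log(k\ell)$, which $\ell>\log k$ does not give. Your small-$x$ case and the proposed iteration through Lemma 2.7 are only sketched. So the proposal as submitted ends on an unresolved obstacle. Deleting the Lemma 4.1 detour and adding the one-line check $\ell^{k/3-3}\ge 2^{k/3-3}>k$ gives the paper's proof.
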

	\begin{proof}
		Suppose $\Delta(\mathcal{F})\leq \ell^{1+2k/3}$ , then by Lemma 2.4,  $|\mathcal{F}|\leq \ell \Delta(\mathcal{F})\leq \ell^{2+2k/3}$. So $|\mathcal{F}||\mathcal{G}|\leq \ell^{2+2k/3} k^\ell <\ell^{k-1}k^{\ell-1}$ for $k\geq N$, contradiction.
	\end{proof}

   Define $y=f(x)$ by the equation $y(\log y -\log 2e)=3 \log x , x\geq 1$. Clearly $\frac{y}{\log x}\to 0$ as $x\to \infty$ .

    \begin{claim}\label{} 
    	$\Delta(\mathcal{F})\geq  f(k) \ell^{k-2}$ or   $\Delta(\mathcal{G})\geq  f(\ell) k^{\ell-2}$ .
    \end{claim}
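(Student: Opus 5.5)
We argue by contradiction: suppose $\Delta(\mathcal{F}) < f(k)\ell^{k-2}$ and $\Delta(\mathcal{G}) < f(\ell)k^{\ell-2}$. By Lemma 2.4 this gives $|\mathcal{F}|\le \ell\Delta(\mathcal{F}) < f(k)\ell^{k-1}$ and $|\mathcal{G}| < f(\ell)k^{\ell-1}$. The product is therefore only a factor $f(k)f(\ell)$ above the lower bound $\ell^{k-1}k^{\ell-1}$ from Theorem 1.5, so the contradiction must come from a loss of this size. The tool will be Lemma 4.1 (the $\frac{2\sqrt2}{3}$ lemma), applied with $S=\emptyset$ and $t$ as large as the hypothesis $|\mathcal{F}|\ge (t-1)\Delta(\mathcal{F})+\ell^{2k/3+1}$ allows.

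The first step is to estimate the admissible $t$. Since $\Delta(\mathcal{F})<f(k)\ell^{k-2}$, we get $|\mathcal{F}|/\Delta(\mathcal{F}) > |\mathcal{F}|/(f(k)\ell^{k-2})$. The term $\ell^{2k/3+1}$ is negligible compared with $\ell^{k-1}$ for large $k$; here Claim 4.4 and $\ell<e^k$ are used. Writing $|\mathcal{F}| = a\,\ell^{k-1}$, we may take $t\approx a\ell/f(k)$, capped at $\ell-1$. Lemma 4.1 then gives
\[|\mathcal{G}|\le \left(\tfrac{2\sqrt2}{3}\right)^{t}k^{\ell}.\]
Hence
\[|\mathcal{F}||\mathcal{G}| \le a\,\ell^{k-1}k^{\ell}\left(\tfrac{2\sqrt2}{3}\right)^{\min(\ell-1,\,a\ell/f(k))}.\]
For this to be at least $\ell^{k-1}k^{\ell-1}$ we need
\[ak\left(\tfrac{2\sqrt2}{3}\right)^{\min(\ell-1,\,a\ell/f(k))}\ge 1.\]
The symmetric argument with $|\mathcal{G}| = b\,k^{\ell-1}$ gives
\[b\ell\left(\tfrac{2\sqrt2}{3}\right)^{\min(k-1,\,bk/f(\ell))}\ge 1.\]
We also have $ab\ge 1$ from optimality, and $a< f(k)$, $b<f(\ell)$.

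The main step, and the expected obstacle, is showing these constraints are incompatible. The plan is to use the sharper bound from Lemma 2.7 (part (1) of the proof) in the regime where $a$ is moderate, and Lemma 4.1 in the regime where $a$ is large. Taking logarithms in the first constraint gives
\[\log a + \log k \ge c\min\bigl(\ell,\, a\ell/f(k)\bigr),\qquad c=\log\tfrac{3}{2\sqrt2}.\]
Because $\ell>\log k$, the cap $\min(\ell,\cdot)=\ell$ cannot be active without contradiction once constants are tuned. So we need
\[\log(ak)\gtrsim c\,a\ell/f(k).\]
The defining equation of $f$, $y(\log y-\log 2e)=3\log x$, appears calibrated exactly so that this, combined with $a\le f(k)$, forces $a$ to be small. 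I would substitute $a=f(k)\theta$ and use the equation to show that for large parameters the only feasible $\theta$ satisfy $a\ell \lesssim f(k)\log k$. The symmetric statement bounds $b$.

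Finally, multiply the two resulting bounds on $a$ and $b$ and compare with $ab\ge 1$, or instead feed the bound on $a$ into the product estimate from (1.7). This should produce $|\mathcal{F}||\mathcal{G}|<(\ell^{k-1}+\ell-1)(k^{\ell-1}+k-1)$, the desired contradiction.

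I expect the delicate part to be the bookkeeping with the exact form of $f$, namely the constant $2e$ and the factor $3$. If the direct logarithmic comparison fails, the fallback is to apply Lemma 4.1 with a nonempty $S=\{x\}$, where $x$ is a max-degree vertex of $\mathcal{G}$, to bound $|\mathcal{G}(x)|$. Then $|\mathcal{G}|\le \Delta(\mathcal{G})+|\mathcal{G}(\overline{x})|$, and the second term is bounded via Lemma 2.8.
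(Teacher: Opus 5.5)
Your outline works, and its engine is the paper's: Lemma 4.1 with $S=\emptyset$ applied to each family with $t\approx|\mathcal{F}|/\Delta(\mathcal{F})$ (resp.\ $|\mathcal{G}|/\Delta(\mathcal{G})$), played against $|\mathcal{F}||\mathcal{G}|>\ell^{k-1}k^{\ell-1}$. You differ only in how the two bounds are combined.

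\begin{itemize}
\item \emph{The paper's combination.} It multiplies $|\mathcal{G}|\le(\frac{2\sqrt2}{3})^{|\mathcal{F}|/\Delta(\mathcal{F})-1}k^{\ell}$ by its mirror image and applies AM--GM to the exponents, using $\frac{|\mathcal{F}|}{\Delta(\mathcal{F})}\cdot\frac{|\mathcal{G}|}{\Delta(\mathcal{G})}>\frac{k\ell}{f(k)f(\ell)}$. This gives $|\mathcal{F}||\mathcal{G}|\le k^{\ell}\ell^{k}(\frac{8}{9})^{\sqrt{k\ell/(f(k)f(\ell))}-1}$ in one line.
\item \emph{Your combination.} You use each bound separately against $ab>1$. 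Put $C=\frac{3}{2\sqrt2}$ and $c=\log C$. The chain $1<ab<Cak\,e^{-ca\ell/f(k)}$ gives $a<f(k)\log(Ckf(k))/(c\ell)$. Symmetrically, $b<f(\ell)\log(C\ell f(\ell))/(ck)$. The product of the two right-hand sides is $<1$ for $k,\ell\ge N$, since $f(x)=O(\log x)$.
\end{itemize}

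Your route is a valid alternative and even bounds $|\mathcal{F}|$ and $|\mathcal{G}|$ individually; it is just a little longer. No regime split with Lemma 2.7 is needed.

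Several of your justifications must be replaced.

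\begin{itemize}
\item \emph{The cap.} Your dismissal of the cap is false as stated. The constant $c\approx0.059$ is fixed, and $\ell>\log k$ does not give $c(\ell-1)>\log(kf(k))$; this fails for $\log k<\ell<c^{-1}\log k$. So the capped form of your first constraint is satisfiable on its own, and there is nothing to tune. The real reason the cap is harmless is Lemma 2.4: $|\mathcal{F}|\le\ell\Delta(\mathcal{F})$. Hence $t=\lceil|\mathcal{F}|/\Delta(\mathcal{F})\rceil-1$ is automatically $\le\ell-1$ and satisfies $t\ge|\mathcal{F}|/\Delta(\mathcal{F})-1>a\ell/f(k)-1$. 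The lost unit in the exponent is absorbed into $C$.
\item \emph{The function $f$.} Its defining equation plays no role here; only $f(x)=O(\log x)$ is used. The calibration with $2e$ and $3$ serves Claim 4.6(i). Neither condition in $\log k<\ell<e^k$ is needed for this claim.
\item \emph{The hypothesis of Lemma 4.1.} For $|\mathcal{F}|\ge(t-1)\Delta(\mathcal{F})+\ell^{2k/3+1}$ you cite Claim 4.4, which is the statement being proved. Cite Claim 4.3 instead: $\Delta(\mathcal{F})>\ell^{1+2k/3}$ gives $(t-1)\Delta(\mathcal{F})+\ell^{2k/3+1}<t\Delta(\mathcal{F})\le|\mathcal{F}|$ for the above $t$.
\item \emph{The fallback.} The fallback via Lemma 2.8 is unnecessary.
\end{itemize}
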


\begin{proof}
	Suppose $\Delta(\mathcal{F})<  f(k)\ell^{k-2}$ and $\Delta(\mathcal{G})<  f(\ell)k^{\ell-2}$ . Let $t=\lceil |\mathcal{F}|/\Delta(\mathcal{F}) \rceil-1$ then $t\leq \ell-1$ and $t\geq |\mathcal{F}|/\Delta(\mathcal{F}) -1$ .Using Lemma 4.1 , $|\mathcal{G}|\leq (\frac{2\sqrt{2}}{3})^t k^{\ell}\leq (\frac{2\sqrt{2}}{3})^{ |\mathcal{F}|/\Delta(\mathcal{F}) -1} k^{\ell}$. Similarly, $|\mathcal{F}|\leq  (\frac{2\sqrt{2}}{3})^{ |\mathcal{G}|/\Delta(\mathcal{G}) -1} \ell^{k}$. So
	$$ \frac{|\mathcal{F}||\mathcal{G}|}{k^{\ell}\ell^{k}}\leq (\frac{2\sqrt{2}}{3})^{ |\mathcal{F}|/\Delta(\mathcal{F})+|\mathcal{G}|/\Delta(\mathcal{G}) -2} \leq \left(\frac{8}{9}\right)^{\sqrt{\frac{|\mathcal{F}||\mathcal{G}|}{\Delta(\mathcal{F})\Delta(\mathcal{G})}}-1}\leq \left(\frac{8}{9}\right)^{\sqrt{\frac{k\ell}{f(k)f(\ell)}}-1}.$$
Therefore $|\mathcal{F}||\mathcal{G}|\leq k^{\ell-1}\ell^{k-1}$ for $k,\ell\geq N$, contradiction.
\end{proof}

  By symmetry we may assume $|\mathcal{F}(x)|=\Delta(\mathcal{F})\geq  f(k) \ell^{k-2}$ .

 \begin{claim}\label{} 
   $|\mathcal{G}(x)|\geq  f(\ell) k^{\ell-2}$ .
\end{claim}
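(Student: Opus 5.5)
We argue by contradiction: assume $|\mathcal{G}(x)|<f(\ell)k^{\ell-2}$, where $x$ is the vertex of maximal degree with $|\mathcal{F}(x)|=\Delta(\mathcal{F})\ge f(k)\ell^{k-2}$. We then bound $|\mathcal{G}|=|\mathcal{G}(x)|+|\mathcal{G}(\overline{x})|$ and $|\mathcal{F}|=|\mathcal{F}(x)|+|\mathcal{F}(\overline{x})|$ separately, and show that $|\mathcal{F}||\mathcal{G}|\le \ell^{k-1}k^{\ell-1}$ for $k,\ell\ge N$. This contradicts the lower bound of Theorem 1.5 recalled at the start of Section 4.

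\textbf{Bounding $|\mathcal{G}(\overline{x})|$.} We apply Lemma 2.7 with $V=\emptyset$ and $U=\{x\}$. Then $\mathcal{F}(\overline{V};U)=\mathcal{F}(x)$ has size at least $f(k)\ell^{k-2}>(t-1)\ell^{k-2}$ for $t=\lfloor f(k)\rfloor$ (and $t\le \ell$ because $f(k)=o(\log k)<\ell$). The lemma gives
\[
|\mathcal{G}(\overline{x})|\le \binom{k+t-2}{t}k^{\ell-t}\le \Bigl(\frac{e(k+t)}{tk}\Bigr)^{t}k^{\ell}\le \Bigl(\frac{2e}{t}\Bigr)^{t}k^{\ell}.
\]
The defining equation $(f/2e)^{f}=x^3$ of $f$ is designed exactly so that $(2e/t)^t\approx k^{-3}$. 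Hence $|\mathcal{G}(\overline{x})|=O(k^{\ell-3})$, which is negligible compared with $k^{\ell-2}$. Consequently $|\mathcal{G}|\le (f(\ell)+o(1))k^{\ell-2}$.

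\textbf{Bounding $|\mathcal{F}|$ and the main obstacle.} The crude bound $|\mathcal{F}|\le \ell^k$ only yields $|\mathcal{F}||\mathcal{G}|\lesssim f(\ell)\ell\cdot \ell^{k-1}k^{\ell-2}$. This suffices when $f(\ell)\ell<k$, but not when $\ell\gtrsim k$; this regime is the main obstacle. There we will use $|\mathcal{F}(x)|\le \ell^{k-1}$ (Lemma 2.6) together with Lemma 2.8:
\[
|\mathcal{F}(\overline{x})|\le \sum_{s}\ell^{k-s}N_s(\mathcal{G}(x)).
\]
The point is that $\mathcal{G}(x)$ is an $(\ell-1)$-graph with few edges, fewer than $f(\ell)k^{\ell-2}$. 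By Lemma 2.5, $\tau(\mathcal{G}(x))\le k$ is bounded in terms of $|\mathcal{G}(x)|$, and minimal transversals of size $s$ number at most $(\ell-1)^s$. For small $s$ the crude estimate $N_s\le \binom{|\mathcal{G}(x)|}{s}$ should beat $\ell^{s}$. I would split the sum at $s\approx 2$. The term $s=1$ is at most the number of vertices covering all of $\mathcal{G}(x)$, and these form a set of bounded size since $\tau(\mathcal{G})=k$ forbids large stars. The aim is $|\mathcal{F}(\overline{x})|=o(\ell^{k-1}k/f(\ell))$. If this route stalls, the fallback is to use Claim 4.3 and Lemma 2.4 to pick a second heavy vertex of $\mathcal{F}$ and rerun the argument in the symmetric roles.

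\textbf{Conclusion.} Combining the two bounds gives $|\mathcal{F}||\mathcal{G}|\le (1+o(1))\ell^{k-1}\cdot f(\ell)k^{\ell-2}$ times a factor that is $o(k/f(\ell))$. This is at most $\ell^{k-1}k^{\ell-1}$ for $k,\ell\ge N$, which is the desired contradiction. The hypothesis $\log k<\ell<e^k$ is used to guarantee $t\le\ell$ and that the $o(1)$ terms, which are functions of both $k$ and $\ell$, are uniformly small.
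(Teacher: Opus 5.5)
Your first step (Lemma 2.7 with $U=\{x\}$, $V=\emptyset$) is sound. The gap is in the overall strategy: you bound $|\mathcal{F}|$ and $|\mathcal{G}|$ separately and multiply. That cannot close the argument, and as written it only works when $\ell f(\ell)$ is noticeably smaller than $k$, which excludes the diagonal case $k=\ell$.

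\textbf{Where it breaks.} Once the hypothesis has been spent on $|\mathcal{G}|\le (f(\ell)+o(1))k^{\ell-2}$, you would need $|\mathcal{F}|=o(\ell^{k-1}k/f(\ell))$. Nothing supports this, and your Lemma 2.8 plan points the wrong way. Lemma 2.7 (with $V=\{x\}$, $U=\emptyset$) says a large $\mathcal{F}(\overline{x})$ forces a \emph{small} $\mathcal{G}(x)$, so knowing $\mathcal{G}(x)$ is small gives no reason for $\mathcal{F}(\overline{x})$ to be small. The sketch also has concrete errors:
\begin{itemize}
\item The bound $N_s\le\binom{|\mathcal{G}(x)|}{s}$ is false: a single $(\ell-1)$-edge has $\ell-1$ minimal transversals of size one.
\item $|\mathcal{G}(x)|$ may be exponentially large in $\ell$, so no such count can beat $\ell^{s}$.
\item The bound on $N_1$ via ``$\tau(\mathcal{G})=k$ forbids large stars'' is unjustified.
\item The ``fallback'' is not an argument.
\end{itemize}
Separately, fixing $t=\lfloor f(k)\rfloor$ in the first step only gives $|\mathcal{G}(\overline{x})|=O(f(k)k^{\ell-3})$. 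This is too weak against $|\mathcal{F}|$ of order $\ell^k$ once $\ell$ is much larger than $k^2/f(k)$.

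\textbf{The missing idea.} Expand
$|\mathcal{F}||\mathcal{G}|=|\mathcal{F}||\mathcal{G}(\overline{x})|+|\mathcal{F}(x)||\mathcal{G}(x)|+|\mathcal{F}(\overline{x})||\mathcal{G}(x)|$
and pair each factor with the bound that compensates it.
\begin{itemize}
\item First term: choose $s$ adaptively by $(s-1)\ell^{k-2}<|\mathcal{F}(x)|\le s\ell^{k-2}$, so $f(k)\le s\le \ell$. Lemma 2.4 gives $|\mathcal{F}|\le\ell\Delta(\mathcal{F})\le s\ell^{k-1}$. Lemma 2.7 gives $|\mathcal{G}(\overline{x})|\le\binom{k+s-1}{s}k^{\ell-s}$ with the same $s$. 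Claim 4.6(ii) then yields $|\mathcal{F}||\mathcal{G}(\overline{x})|\le\ell^{k-1}k^{\ell-2}$, however large $|\mathcal{F}|$ is.
\item Middle term: it is at most $\ell^{k-1}f(\ell)k^{\ell-2}$ by Lemma 2.6 and the hypothesis.
\item Last term: define $t$ by $(t-1)\ell^{k-1}<|\mathcal{F}(\overline{x})|\le t\ell^{k-1}$; Claim 4.3 and Lemma 4.1 give $t<\ell$. Lemma 2.7 gives $|\mathcal{G}(x)|\le\binom{k+t-1}{t}k^{\ell-1-t}$. Take the geometric mean of this with the hypothesis $|\mathcal{G}(x)|<f(\ell)k^{\ell-2}$. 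Claim 4.6(iii) then gives $|\mathcal{F}(\overline{x})||\mathcal{G}(x)|\le 2\sqrt{2}\,\ell^{k-1}k^{\ell-1}\sqrt{f(\ell)/k}$.
\end{itemize}
All three terms are $o(\ell^{k-1}k^{\ell-1})$ because $f(\ell)/k\to 0$. This is exactly where $\ell<e^k$ is needed, since $f(\ell)=o(\log \ell)$; it is not merely there to make error terms uniform.
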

	\begin{proof}
		Suppose  $|\mathcal{G}(x)|<  f(\ell) k^{\ell-2}$.   Define $s\in \mathbb{Z}_+$ by $(s-1)\ell^{k-2}< |\mathcal{F}(x)|\leq s\ell^{k-2}$, then $\ell \geq s\geq f(k)$. Using Lemma 2.7 , $|\mathcal{G}(\overline{x})|\leq \binom{k+s-1}{s}k^{\ell-s}$.
		Suppose $(t-1)\ell^{k-1}<|\mathcal{F}(\overline{x})|\leq t\ell^{k-1}, t \in \mathbb{Z}_+$ . By Claim 4.3, $|\mathcal{G}|>\Delta(\mathcal{G}) > k^{1+2\ell/3}$. Using Lemma 4.1, $|\mathcal{F}|\leq (\frac{2\sqrt{2}}{3})\ell^{k}$, $t<\ell$ follows . Using Lemma 2.7, $|\mathcal{G}(x)|\leq \binom{k+t-1}{t}k^{\ell-1-t}$. 
		
		\begin{claim}\label{} 
			
		(i) Let $s_0=\lceil f(k) \rceil$, then $\binom{k+s_0-1}{s_0}k^{-s_0} \leq k^{-3} $ ,
		
		(ii) $s\binom{k+s-1}{s}k^{-s}\leq k^{-2}$ ,
		
		(iii) $t^2\binom{k+t-1}{t}k^{-t}<8$ .
		\end{claim}
	
	\begin{proof}
		(i) As $\binom{k+s_0-1}{s_0}k^{-s_0}\leq (\frac{e(k+s_0-1)}{s_0})^{s_0}k^{-s_0} \leq (\frac{2e}{s_0})^{s_0}\leq (\frac{2e}{f(k)})^{f(k)}$ and by definition $(\frac{2e}{f(k)})^{f(k)}=  k^{-3}$, (i) follows.
		
		(ii) Using $s\geq s_0$,  $s\binom{k+s-1}{s}k^{-s}=\frac{(1+\frac{s-1}{k})(1+\frac{s-2}{k})...(1+\frac{1}{k})}{(s-1)!}=\displaystyle \prod_{i=1}^{s-1}\frac{1+\frac{i}{k}}{i}\leq \prod_{i=1}^{s_0-1}\frac{1+\frac{i}{k}}{i}\leq s_0\binom{k+s_0-1}{s_0}k^{-s_0}\leq k^{-2}$.
		
		(iii) $t^2\binom{k+t-1}{t}k^{-t}=\frac{t(1+\frac{t-1}{k})(1+\frac{t-2}{k})...(1+\frac{1}{k})}{(t-1)!}=\frac{t}{t-1} \displaystyle \prod_{i=2}^{t-1}\frac{1+\frac{i}{k}}{i-1}(1+\frac{1}{k}) \leq \frac{t}{t-1} \displaystyle \prod_{i=2}^{3}\frac{1+\frac{i}{k}}{i-1}(1+\frac{1}{k})  \leq \displaystyle \prod_{i=1}^{3}(1+\frac{i}{k})<8 $ for $t\geq 2$.
		, while $t^2\binom{k+t-1}{t}k^{-t}=1 $  for $t= 1$ .
	\end{proof}
		 Let us estimate separately the four terms of the expansion of $|\mathcal{F}||\mathcal{G}|=(|\mathcal{F}(x)|+|\mathcal{F}(\overline{x})|)(|\mathcal{G}(x)|+|\mathcal{G}(\overline{x})|)$. Using Claim 4.6,
		
		$|\mathcal{F}||\mathcal{G}(\overline{x})|\leq   s\ell^{k-1}\binom{k+s-1}{s}k^{\ell-s}\leq \ell^{k-1} k^{\ell-2}$,
		
		$|\mathcal{F}(\overline{x})||\mathcal{G}(x)|\leq  t\ell^{k-1}\binom{k+t-1}{t}k^{\ell-1-t}=\ell^{k-1}k^{\ell-1}[t\binom{k+t-1}{t}k^{-t}]$,
		
		$|\mathcal{F}(\overline{x})||\mathcal{G}(x)|\leq  t\ell^{k-1} f(\ell) k^{\ell-2}$,
		
		 Thus $|\mathcal{F}(\overline{x})||\mathcal{G}(x)|\leq \sqrt{ \ell^{k-1}k^{\ell-1}[t\binom{k+t-1}{t}k^{-t}] t\ell^{k-1} f(\ell) k^{\ell-2}} =\sqrt{\ell^{2k-2}k^{2\ell-3}f(\ell)} \sqrt{t^2\binom{k+t-1}{t}k^{-t}}\leq 2\sqrt{2} \sqrt{\ell^{2k-2}k^{2\ell-3}f(\ell)}$.
		
		While $|\mathcal{F}(x)||\mathcal{G}(x)|\leq  \ell^{k-1} f(\ell) k^{\ell-2} $. Together with all above ,	$|\mathcal{F}||\mathcal{G}|<k^{\ell-1}\ell^{k-1}$ for $k\geq N$, contradiction.
	\end{proof}

		\begin{claim}\label{} 
		$|\mathcal{F}(\overline{x})|=\ell-1$ and $ |\mathcal{G}(\overline{x})|=k-1$.
	\end{claim}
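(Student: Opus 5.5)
We argue by contradiction, proving first $|\mathcal{F}(\overline{x})|=\ell-1$; the statement for $\mathcal{G}(\overline{x})$ is symmetric. Here $x$ is the vertex fixed after Claim 4.4, and Claim 4.5 gives $|\mathcal{G}(x)|\geq f(\ell)k^{\ell-2}$.

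\emph{Lower bound.} Let $T$ be a transversal of $\mathcal{F}(\overline{x})$. Then $T\cup\{x\}$ is a transversal of $\mathcal{F}$, so $\tau(\mathcal{F}(\overline{x}))\geq \ell-1$. Hence $|\mathcal{F}(\overline{x})|\geq \ell-1$.

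\emph{Setting up the contradiction.} Since $\mathcal{G}(x)\neq\emptyset$ and $1<\ell$, Lemma 2.5 with $S=\{x\}$ gives
\[
\tau(\mathcal{F}(\overline{x}))=\ell-1 \quad\text{and}\quad \mathcal{G}(x)=\mathcal{T}(\mathcal{F}(\overline{x})).
\]
Suppose $|\mathcal{F}(\overline{x})|\geq \ell>\tau(\mathcal{F}(\overline{x}))$. Then Lemma 4.2, applied to the $k$-graph $\mathcal{F}(\overline{x})$ with $t=\ell-1$, yields
\[
|\mathcal{G}(x)|\leq (k^2-k+1)k^{\ell-3}=\Bigl(1-\tfrac1k+\tfrac1{k^2}\Bigr)k^{\ell-1}.
\]
This deficit of order $1/k$ against $k^{\ell-1}$ is what should produce the contradiction with the lower bound $|\mathcal{F}||\mathcal{G}|\geq(\ell^{k-1}+\ell-1)(k^{\ell-1}+k-1)>\ell^{k-1}k^{\ell-1}$ from Theorem 1.5.

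\emph{Bounding the four terms.} We write $|\mathcal{F}||\mathcal{G}|$ as a sum of four products, as in the proof of Claim 4.5.
\begin{itemize}
\item The main term is $|\mathcal{F}(x)||\mathcal{G}(x)|\leq \ell^{k-1}\cdot(1-\frac1k+\frac1{k^2})k^{\ell-1}$, using Lemma 2.6.
\item For $|\mathcal{F}(x)||\mathcal{G}(\overline{x})|$, define $s$ by $(s-1)\ell^{k-2}<|\mathcal{F}(x)|\leq s\ell^{k-2}$, so $s\geq f(k)$. Lemma 2.7 and Claim 4.6(i),(ii) give $|\mathcal{G}(\overline{x})|\leq \binom{k+s-1}{s}k^{\ell-s}\leq k^{\ell-3}$. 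Together with $|\mathcal{F}(x)|\leq\ell^{k-1}$ this term is $O(\ell^{k-1}k^{\ell-1}/k^{2})$.
\item Symmetrically, since $|\mathcal{G}(x)|\geq f(\ell)k^{\ell-2}$, Lemma 2.7 with the roles of $\mathcal{F},\mathcal{G}$ exchanged (and the analogue of Claim 4.6 in $\ell$) gives $|\mathcal{F}(\overline{x})|\leq \ell^{k-3}$. Hence $|\mathcal{F}(\overline{x})||\mathcal{G}(x)|\leq \ell^{k-3}k^{\ell-1}$ and $|\mathcal{F}(\overline{x})||\mathcal{G}(\overline{x})|\leq \ell^{k-3}k^{\ell-3}$.
\end{itemize}
Summing, we get
\[
|\mathcal{F}||\mathcal{G}|\leq \ell^{k-1}k^{\ell-1}\Bigl(1-\tfrac1k+O\bigl(\tfrac1{k^2}+\tfrac1{\ell^2}\bigr)\Bigr),
\]
and we aim to show this is less than $\ell^{k-1}k^{\ell-1}$.

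\emph{Main obstacle.} The step I expect to be hardest is absorbing the error terms into the $-\frac1k$ deficit. The term $|\mathcal{F}(\overline{x})||\mathcal{G}(x)|$ contributes a relative error of order $\ell^{-2}$, which beats $1/k$ only when $\ell\gg\sqrt k$. In the range $\log k<\ell\lesssim\sqrt k$ I would sharpen this term in one of two ways.
\begin{itemize}
\item First option: replace the crude bound $\ell^{k-3}$ on $|\mathcal{F}(\overline{x})|$ by the true size. Once $|\mathcal{F}(\overline{x})|$ is shown to be only polynomial in $\ell,k$, the term becomes negligible against $\ell^{k-1}k^{\ell-1}/k$. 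To get that polynomial bound, iterate Lemma 2.7 with larger $s$: since $|\mathcal{G}(x)|\approx k^{\ell-1}$, the relevant parameter is close to $\ell$.
\item Second option: use the exact form of Lemma 4.2 applied to $\mathcal{F}(\overline{x})$, which loses a full factor $\frac{k-1}{k}$ on every pair $F,F'$ that meet.
\end{itemize}
If neither suffices, I would apply Lemma 2.8 to $|\mathcal{F}(\overline{x})|$ through minimal transversals of $\mathcal{G}(x)$. Since $\mathcal{G}(x)$ is close to a full product, it has very few small minimal transversals, which should force $|\mathcal{F}(\overline{x})|$ down to $\ell-1$ directly.

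Once $|\mathcal{F}(\overline{x})|=\ell-1$ is established, the identical argument with the roles of $(\mathcal{F},k)$ and $(\mathcal{G},\ell)$ exchanged gives $|\mathcal{G}(\overline{x})|=k-1$.
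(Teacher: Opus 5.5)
\textbf{There is a genuine gap: the step you flag as the main obstacle is never carried out.}

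Your setup is correct. This covers $|\mathcal{F}(\overline{x})|\ge \ell-1$, the identity $\mathcal{G}(x)=\mathcal{T}(\mathcal{F}(\overline{x}))$ from Lemma 2.5, the deficit $|\mathcal{G}(x)|\le (k^2-k+1)k^{\ell-3}$ from Lemma 4.2, and the bounds $|\mathcal{F}(\overline{x})|\le \ell^{k-3}$, $|\mathcal{G}(\overline{x})|\le k^{\ell-3}$. The problem is the ranges where your estimate does not close:
\begin{itemize}
\item With only the relative deficit $1/k$ on $\mathcal{G}(x)$, the term $|\mathcal{F}(\overline{x})||\mathcal{G}(x)|$ costs a relative $\ell^{-2}$. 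So your four-term estimate closes only when $\ell^2$ exceeds a constant multiple of $k$.
\item Your ``identical argument'' for $\mathcal{G}(\overline{x})$ pits an error $k^{-2}$ against a deficit $1/\ell$. So it closes only when $\ell$ is below a constant multiple of $k^2$.
\end{itemize}
Both excluded ranges meet $\log k<\ell<e^k$, and there you only list options. Your first option is also misdescribed. Since $|\mathcal{G}(x)|$ is close to $k^{\ell-1}$, the parameter in Lemma 2.7 (with $\mathcal{F},\mathcal{G}$ swapped) is close to $k$, not $\ell$, and the bound it yields is not polynomial in $\ell,k$. The other two options are too vague to check.

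\textbf{How the paper closes it.} The paper splits on $|\mathcal{G}(\overline{x})|$.
\begin{itemize}
\item If also $|\mathcal{G}(\overline{x})|\ge k$, Lemma 4.2 applies on both sides. Bounding the two factors separately gives $|\mathcal{F}|\le (\ell^2-\ell+1)\ell^{k-3}+\ell^{k-3}\le \ell^{k-1}$ and likewise $|\mathcal{G}|\le k^{\ell-1}$. Each error is absorbed by the deficit of its own family, so the $\ell^{-2}$ versus $1/k$ mismatch never arises.
\item If $|\mathcal{G}(\overline{x})|=k-1$, then $|\mathcal{F}(x)|=\ell^{k-1}$ exactly, and there are two sub-cases. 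If $|\mathcal{F}(\overline{x})|<\ell^{2k/3+1}$, the relative error $\ell^{2-k/3}$ is negligible against the $1/(2k)$ deficit of $|\mathcal{G}|$. Otherwise Lemma 4.1 with $S=\{x\}$, $t=1$ gives $|\mathcal{G}(x)|\le \frac{2\sqrt{2}}{3}k^{\ell-1}$.
\end{itemize}

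\textbf{How to repair your first option.} It can be made rigorous as follows.
\begin{itemize}
\item The hypothesis $|\mathcal{F}||\mathcal{G}|>\ell^{k-1}k^{\ell-1}$, together with $|\mathcal{F}|\le \ell^{k-1}+\ell^{k-3}$ and $|\mathcal{G}(\overline{x})|\le k^{\ell-3}$, gives $|\mathcal{G}(x)|>k^{\ell-1}(1-\ell^{-2}-k^{-2})$.
\item So the swapped Lemma 2.7 with $U=\{x\}$, $V=\emptyset$ applies for some $t\le k$ with $k-t\le k/\ell^2+1$. It yields $|\mathcal{F}(\overline{x})|\le \binom{k+\ell}{\ell}\ell^{k/\ell^2+1}$, which is $o(\ell^{k-1}/k)$ for $2\le \ell\le 2\sqrt{k}$.
\item The mirror computation, giving $|\mathcal{G}(\overline{x})|=o(k^{\ell-1}/\ell)$, handles $\ell\ge k^2/4$.
\end{itemize}
That estimate is the heart of the claim, and your proposal leaves it undone.
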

	
	\begin{proof}
	By Claims 4.5 and 4.6,  $|\mathcal{F}(\overline{x})|\leq \ell^{k-3}$ and $|\mathcal{G}(\overline{x})|\leq k^{\ell-3}$.
	
	Suppose $|\mathcal{F}(\overline{x})|\geq \ell$ and $|\mathcal{G}(\overline{x})|\geq k$ ,then by Lemma 2.5 and 4.2, $|\mathcal{G}(x)|\leq (k^2-k+1)k^{\ell-3}$ and  $|\mathcal{F}(x)|\leq (\ell^2-\ell+1)\ell^{k-3}$. Therefore $|\mathcal{F}|\leq (1-\frac{1}{\ell}+2(\frac{1}{\ell})^2)\ell^{k-1} \leq  \ell^{k-1}$. Similarly $|\mathcal{G}|\leq k^{\ell-1}$ , consequently $|\mathcal{F}||\mathcal{G}|\leq \ell^{k-1}k^{\ell-1}$, contradiction.
	
	Suppose $\ell\leq |\mathcal{F}(\overline{x})|< \ell^{2k/3+1}$ and $|\mathcal{G}(\overline{x})|= k-1$ ,then by Lemma 2.5 and 4.2, $|\mathcal{G}(x)|\leq (k^2-k+1)k^{\ell-3}$ and  $|\mathcal{F}(x)|= \ell^{k-1}$. Therefore $|\mathcal{G}|\leq (1-\frac{1}{k}+2(\frac{1}{k})^2)k^{\ell-1}$ and $|\mathcal{F}|\leq (1+\ell^{2-k/3})\ell^{k-1}$. Since $1-\frac{1}{k}+2(\frac{1}{k})^2\leq 1-\frac{1}{2k} (<\frac{1}{1+\frac{1}{2k}})$ and $\ell^{k/3-1}\geq 2k$  for $k\geq N$ ,  $|\mathcal{F}||\mathcal{G}|\leq \ell^{k-1}k^{\ell-1}$, contradiction.
	
	Suppose $ |\mathcal{F}(\overline{x})|\geq  \ell^{2k/3+1}$ and $|\mathcal{G}(\overline{x})|= k-1$ , the by Lemmas 2.5, 4.1 and 4.2, $|\mathcal{G}(x)|\leq (\frac{2\sqrt{2}}{3})k^{\ell-1}$ and  $|\mathcal{F}(x)|= \ell^{k-1}$. Therefore $|\mathcal{G}|\leq \frac{2\sqrt{2}}{3} k^{\ell-1}+k^{\ell-3}$ and $|\mathcal{F}|\leq \ell^{k-1}+\ell^{k-3}$. Since $(1+\ell^{-2})(\frac{2\sqrt{2}}{3}+k^{-2})<1$ for $k,\ell\geq N$ ,  $|\mathcal{F}||\mathcal{G}|\leq \ell^{k-1}k^{\ell-1}$ follows , contradiction.
	
	 Similarly for  $|\mathcal{F}(\overline{x})|= \ell-1$ and $|\mathcal{G}(\overline{x})|\geq k$.
	\end{proof}
	
	By Lemma 2.5 and Claim 4.7 ,  $\mathcal{F}(\overline{x})$ consists of $\ell-1$ pairwise disjoint edges and  $|\mathcal{G}(x)|=|\mathcal{T}(\mathcal{F}(\overline{x}))|=k^{\ell-1}$, hence $|\mathcal{G}|=k^{\ell-1}+k-1$. Similarly, $|\mathcal{F}|=\ell^{k-1}+\ell-1$, completing the proof.
\end{proof}
\noindent\textbf{Remark 4.8.}  If $\ell > e^k $, then by the proof, we can see that $m(k,\ell)\leq (2+2/k)\ell^{k-1}k^{\ell-1}$ for $k \geq N $.

	\section{Proof of Theorem 1.7}
	Let $\mathcal{F}$ and $ \mathcal{G}$ be $3$-graphs forming an optimal pair.
	\begin{claim}\label{} For all $x\in X$, $|\mathcal{F}(x)|\leq 9$; for $x\neq y$, $|\mathcal{F}(\{x,y\})|\leq 3$ and moreover if the equality holds then $\mathcal{G}(\overline{\{x,y\}})=\{\bigcup\mathcal{F}(\{x,y\})\}$.
	\end{claim}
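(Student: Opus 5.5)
The first two bounds follow from Lemma 2.6 with $k=\ell=3$. For $|S|=1$ it gives $|\mathcal{F}(x)|\le 3^{3-1}=9$, and for $|S|=2$ it gives $|\mathcal{F}(\{x,y\})|\le 3^{3-2}=3$. So only the equality case needs an argument.

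Suppose $|\mathcal{F}(\{x,y\})|=3$ and put $S=\{x,y\}$. Since $|S|=2<3$ and $\mathcal{F}(S)\neq\emptyset$, Lemma 2.5 gives
\[
\tau(\mathcal{G}(\overline{S}))=1 \quad\text{and}\quad \mathcal{F}(S)=\mathcal{T}(\mathcal{G}(\overline{S})).
\]
Because $\tau=1$, the set $\mathcal{T}(\mathcal{G}(\overline{S}))$ consists exactly of the singletons $\{z\}$ with $z\in\bigcap\mathcal{G}(\overline{S})$. Hence $|\bigcap \mathcal{G}(\overline{S})|=3$. Every member of $\mathcal{G}(\overline{S})$ is a $3$-set containing this $3$-element intersection, so it equals that intersection. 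Therefore $\mathcal{G}(\overline{S})$ consists of a single edge $G_0=\bigcap\mathcal{G}(\overline{S})$. Since $\mathcal{F}(S)=\{\{z\}: z\in G_0\}$, we get $\bigcup\mathcal{F}(S)=G_0$, which is the claimed identity $\mathcal{G}(\overline{\{x,y\}})=\{\bigcup\mathcal{F}(\{x,y\})\}$.

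There is no real obstacle here. The only point needing care is checking that the nonemptiness hypothesis of Lemma 2.5 holds, and it does because $|\mathcal{F}(S)|=3>0$.
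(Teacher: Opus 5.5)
Your proof is correct and follows the paper's route: the paper calls the claim a straightforward corollary of Lemmas 2.6 and 2.5, using Lemma 2.6 for the bounds $9$ and $3$ and Lemma 2.5 to identify $\mathcal{F}(\{x,y\})$ with the singleton transversals of $\mathcal{G}(\overline{\{x,y\}})$. You have written out the equality-case detail the paper leaves implicit, namely that $3$-sets sharing a common $3$-element intersection must all equal it, so $\mathcal{G}(\overline{\{x,y\}})$ is a single edge.
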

\begin{proof}
	It is a straightforward corollary of Lemmas 2.6 and 2.5. 
\end{proof}

	\begin{claim}\label{} If $|\mathcal{F}(x)|=3$, then $|\mathcal{G}(\overline{x})|\leq 10$.
\end{claim}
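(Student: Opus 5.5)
The plan is to apply Lemma 2.8 with $k=\ell=3$. Taking the second inequality there with the roles as stated, we get
\[
|\mathcal{G}(\overline{x})|\le \sum_{s=1}^{3}3^{3-s}N_s(\mathcal{F}(x)) = 9N_1(\mathcal{F}(x))+3N_2(\mathcal{F}(x))+N_3(\mathcal{F}(x)).
\]
This rests on two facts from that proof. First, every $G\in\mathcal{G}(\overline{x})$ meets every member of $\mathcal{F}(\widehat{x})$ while avoiding $x$, so $G$ contains a minimal transversal of $\mathcal{F}(x)$ of size at most $3$. Second, by Lemma 2.6 a set of size $s$ lies in at most $3^{3-s}$ members of $\mathcal{G}$.

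Since $|\mathcal{F}(x)|=3$, the link $\mathcal{F}(x)$ is a simple graph with exactly three distinct edges on $X\setminus\{x\}$. Up to isomorphism it is one of five graphs: a triangle, a star $K_{1,3}$, a path $P_4$, a path $P_3$ plus a disjoint edge, or a matching $3K_2$. I will compute $N_1,N_2,N_3$ for each, counting only minimal transversals of size at most $3$.

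\textbf{Triangle} $ab,bc,ca$: $N_1=0$, the minimal transversals are exactly the three pairs, so $N_2=3$ and $N_3=0$. The bound is $9$.

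\textbf{Path} $ab,bc,cd$: the minimal transversals are $\{b,c\},\{a,c\},\{b,d\}$, and no minimal transversal of size $3$ exists. The bound is $9$.

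\textbf{Star} with centre $b$ and leaves $a,c,d$: $N_1=1$ (namely $\{b\}$), $N_2=0$, $N_3=1$ (namely $\{a,c,d\}$). The bound is $9+1=10$.

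\textbf{$P_3$ plus an edge} $ab,bc,de$: the minimal transversals are $\{b,d\},\{b,e\},\{a,c,d\},\{a,c,e\}$. The bound is $6+2=8$.

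\textbf{Matching} $3K_2$: $N_1=N_2=0$ and $N_3=8$. The bound is $8$.

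In every case $|\mathcal{G}(\overline{x})|\le 10$, which proves the claim. The only point needing care is the minimality bookkeeping, namely that no other minimal transversals of size at most $3$ exist in each case. For example, in the triangle and $P_4$ cases every transversal contains one of the listed pairs, so no $3$-set is a minimal transversal. The star is the tight case, with $9$ coming from $|\mathcal{G}(\{b\})|\le 9$ via Lemma 2.6 and $1$ from the leaf set.
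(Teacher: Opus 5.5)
Your proof is correct and is essentially the paper's argument: you apply Lemma 2.8 with $k=\ell=3$ and run through the five possible $3$-edge graphs $\mathcal{F}(x)$, obtaining the same vectors $(N_1,N_2,N_3)$, with the star $K_{1,3}$ giving the extremal value $10$. The only differences are presentational: the paper groups the cases by $\Delta(\mathcal{F}(x))$, and its $P_m$ denotes a path with $m$ edges, so its $P_3$ and $P_2+P_1$ are your $P_4$ and ``$P_3$ plus an edge''.
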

		
\begin{proof}
	
	If $\Delta(\mathcal{F}(x))=3$, then $\mathcal{F}(x)=K_{1,3}$, hence $(N_s(\mathcal{F}(x)))_{s=1}^{3}=(1,0,1)$.
	
	If $\Delta(\mathcal{F}(x))=2$, then $\mathcal{F}(x)=C_3$ or $P_3$ or $P_2 + P_1$, the corresponding values of $(N_s(\mathcal{F}(x)))_{s=1}^{3}$ are  $(0,3,0)$ or $(0,3,0)$ or $(0,2,2)$.
	
	If $\Delta(\mathcal{F}(x))=1$, then $\mathcal{F}(x)=P_1+P_1+P_1$, hence $(N_s(\mathcal{F}(x)))_{s=1}^{3}=(0,0,8)$.
	
	Using Lemma 2.8, the conclusion follows.
	
\end{proof}

	\begin{claim}\label{} Suppose $|\mathcal{F}(x)|\geq 4$,then $|\mathcal{G}(\overline{x})|\leq 11-|\mathcal{F}(x)| $ with equality holding only if  $|\mathcal{F}(x)|= 4 $ or $9$.
\end{claim}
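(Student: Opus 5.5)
Here $k=\ell=3$. By Lemma 2.8 with $\ell=3$, $k=3$,
\[
|\mathcal{G}(\overline{x})|\le 9N_1(\mathcal{F}(x))+3N_2(\mathcal{F}(x))+N_3(\mathcal{F}(x)),
\]
where $\mathcal{F}(x)$ is a graph (a $2$-graph) with $m:=|\mathcal{F}(x)|\in[4,9]$ edges; the bound $m\le 9$ is Claim 5.1. The plan is to go through the possible graphs $\mathcal{F}(x)$ with $m\ge 4$ edges and check that this weighted count of minimal vertex covers of size at most $3$ is at most $11-m$.

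A first reduction cuts down the case analysis. If $\mathcal{F}(x)$ had a vertex cover of size $1$, i.e.\ a vertex $y$ lying in all $m\ge 4$ edges, then $|\mathcal{F}(\{x,y\})|=m\ge 4$, contradicting Claim 5.1. So $N_1=0$, and in fact every vertex of $\mathcal{F}(x)$ has degree at most $3$. Moreover, by Lemma 2.5 (with $S=\{x\}$), $\mathcal{G}(\overline{x})$ is nonempty and every member of $\mathcal{G}(\overline{x})$ is a $3$-set containing a minimal cover of $\mathcal{F}(x)$. Also $\tau(\mathcal{F}(\overline{x}))=2$ forces the cover sizes to be at least $2$, since a member of $\mathcal{G}$ avoiding $x$ meets every edge of $\mathcal{F}(x)$.

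It then remains to bound $3N_2+N_3$ for graphs with max degree at most $3$ and $m\ge 4$ edges.
\begin{itemize}
\item If $\tau(\mathcal{F}(x))\ge 3$, then $N_2=0$ and $N_3$ is at most the number of $3$-covers. A graph with $\tau=3$ and $m$ edges has few $3$-covers (e.g.\ $K_4$, with $m=6$, has $4$ of them). One checks that $N_3\le 11-m$, and in the worst configurations one can use Gyárfás' bound (Theorem 1.4) applied to a suitable subgraph.
\item If $\tau(\mathcal{F}(x))=2$, then each $2$-cover $\{a,b\}$ has $\deg a+\deg b\ge m$. Since degrees are at most $3$, we get $m\le 6$, and for $m=5,6$ the $2$-covers are very restricted (for example $K_{2,3}$ or $K_{3,3}$ minus edges). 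For $m=4$ the candidates are $C_4$, the paw, $K_{1,3}+e$, $P_5$, and so on.
\end{itemize}
The crude Lemma 2.8 bound can overshoot; for instance $C_4$ gives $3\cdot 2+0=6<7$, which is fine. For tighter cases I would refine the count: a member of $\mathcal{G}(\overline{x})$ containing a $2$-cover $\{a,b\}$ lies in $\mathcal{G}(\{a,b\})$, which by Claim 5.1 (with roles of $\mathcal{F},\mathcal{G}$ swapped) has size at most $3$. I would also use that different covers can share the same $3$-set, and count distinct $3$-supersets directly rather than summing.

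The equality characterization then follows by recording which graphs attain $11-m$. I expect the extremal cases to be $m=4$ (for example $\mathcal{F}(x)$ a $4$-cycle-like configuration or $2K_2$ plus edges) and $m=9$, where $\mathcal{F}(x)=K_{3,3}$ has exactly two minimal $3$-covers, the two sides, giving $2=11-9$. For $m=5,\dots,8$ the strict inequality should come out of the case check.

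The main obstacle is the middle range $m=5,6$ with $\tau=2$ or $\tau=3$. There the naive weights $3N_2+N_3$ from Lemma 2.8 may exceed $11-m$, so the refined argument is needed: count actual $3$-sets of $\mathcal{G}(\overline{x})$ as unions of covers, and invoke the equality clause of Claim 5.1, which says that if $|\mathcal{G}(\{a,b\})|=3$ then $\mathcal{F}(\overline{\{a,b\}})$ is a single edge. That clause lets one exclude simultaneous saturation.
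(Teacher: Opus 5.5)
Your preliminary reductions are sound. Claim 5.1 gives $m\le 9$, excludes a $1$-cover (so $N_1(\mathcal{F}(x))=0$) and forces $\Delta(\mathcal{F}(x))\le 3$. For $\tau(\mathcal{F}(x))=2$, the degree-sum argument correctly gives $m\le 6$.

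\textbf{The gap.} Everything after these reductions is deferred, and what is deferred is the whole content of the claim: that $3N_2+N_3\le 11-m$ for every admissible graph, with strict inequality for $5\le m\le 8$.
\begin{itemize}
\item For $\tau=3$ you only write ``one checks that $N_3\le 11-m$''. This ranges over all graphs of maximum degree $3$ with $4\le m\le 9$ edges, and you give no device that makes the enumeration finite and controlled.
\item For $\tau=2$ you list a few graphs ``and so on''.
\item Strictness for $5\le m\le 8$ ``should come out of the case check''.
\end{itemize}
You also say the naive weights may overshoot for $m=5,6$ and that a refinement is needed, but none is supplied. The one ingredient you name, $|\mathcal{G}(\{a,b\})|\le 3$, is exactly where the coefficient $3$ in Lemma 2.8 comes from, so it refines nothing. (For $\tau=2$ the crude bound in fact gives at most $5$ at $m=5$ and at most $4$ at $m=6$, so the worry is misplaced, but you have not shown this either.)

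Your guess at the $m=4$ extremal graphs is also wrong. By your own computation $C_4$ gives $6<7$. The value $7$ is attained by $K_{1,3}$ plus one edge meeting its leaves, e.g.\ $\{12,13,14,34\}$ and $\{12,13,14,25\}$, each with $N_2=2$, $N_3=1$.

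\textbf{The missing idea.} The paper makes the case analysis collapse by splitting on $\Delta(\mathcal{F}(x))$ rather than on $\tau$.
\begin{itemize}
\item $\Delta(\mathcal{F}(x))=1$ is impossible, since a matching with at least $4$ edges has $\tau\ge 4$.
\item If $\Delta(\mathcal{F}(x))=2$, all components are paths or cycles, and Lemma 5.4 gives $|\mathcal{F}(x)|+3N_2+N_3\le 10$.
\item If some vertex $1$ has degree $3$, say $12,13,14\in\mathcal{F}(x)$, then $|\mathcal{F}(\{x,1\})|=3$. The equality clause of Claim 5.1, applied to $\{x,1\}$, gives $\mathcal{G}(\overline{\{x,1\}})=\{234\}$.
\end{itemize}
The last case has two consequences. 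Every edge of $\mathcal{F}(x)$ meets $\{2,3,4\}$. Every member of $\mathcal{G}(\overline{x})$ other than $234$ contains $1$, and likewise contains any further degree-$3$ vertex $y\notin\{1,2,3,4\}$.

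The count then reduces to transversals of $\mathcal{H}=\mathcal{F}(x)-1$ (or $\mathcal{F}(x)-\{1,y\}$). This graph has maximum degree at most $2$ and $\tau(\mathcal{H})\le 2$, so Lemma 5.4 applies again. This yields the bound and pins down equality: $\mathcal{F}(x)=K_{3,3}$ ($m=9$), or $\mathcal{H}$ a single edge ($m=4$).

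Without this reduction, or an enumeration actually carried out, your proposal does not prove the claim.
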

	To prove Claim 5.3, we begin by proving
\begin{lemma}\label{} Let $\mathcal{H}$ be a graph with $\Delta(\mathcal{H})\leq 2$ , $S_1(\mathcal{H})=|\mathcal{H}|+N_1(\mathcal{H})$,
	 $S_2(\mathcal{H})=|\mathcal{H}|+3N_1(\mathcal{H})+N_2(\mathcal{H})$ and $S_3(\mathcal{H})=(|\mathcal{H}|+3N_2(\mathcal{H})+N_3(\mathcal{H}))\mathbf{1}_{|\mathcal{H}|\geq 4}$ , where $\mathbf{1}_{|\mathcal{H}|\geq 4}$ is assigned $1$ if $|\mathcal{H}|\geq 4$ , and  $0$ otherwise.
	 
	(1) If $\tau(\mathcal{H})=1$, then $S_1(\mathcal{H})\leq 3$ and $S_2(\mathcal{H})\leq 7$;
	
	(2) If $\tau(\mathcal{H})=2$, then  $S_2(\mathcal{H})\leq 6$ and $S_3(\mathcal{H})\leq 10$;
	
	(3) If $\tau(\mathcal{H})=3$, then $S_3(\mathcal{H})\leq 10$.
	
\end{lemma}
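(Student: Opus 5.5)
The plan is a direct finite case analysis. Since $\Delta(\mathcal{H})\le 2$, every connected component of $\mathcal{H}$ (ignoring isolated vertices, which affect neither $|\mathcal{H}|$ nor the $N_s$) is a path $P_m$ with $m$ edges or a cycle $C_m$. The covering number of such a component is $\lceil m/2\rceil$. The argument rests on two structural facts.

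\begin{itemize}
\item $\tau$ is additive over components.
\item A set is a minimal transversal of a disjoint union exactly when it is a union of minimal transversals, one for each component. Hence the sequence $(N_s)$ of the union is the convolution of the component sequences.
\end{itemize}

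Using these facts, the condition $\tau(\mathcal{H})=t\in\{1,2,3\}$ leaves only finitely many isomorphism types. For each type I compute $|\mathcal{H}|$ and the relevant $N_s$ and check the bounds. Note that $N_s=0$ for $s<\tau(\mathcal{H})$, which simplifies $S_2$ when $\tau=2$ and $S_3$ when $\tau=3$.

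First I tabulate the component data. Writing $(N_1,N_2,N_3)$:
\begin{itemize}
\item $P_1$: $(2,0,0)$.
\item $P_2$: $(1,1,0)$.
\item $P_3$: $(0,3,0)$.
\item $P_4$: $(0,1,3)$.
\item $C_3$: $(0,3,0)$.
\item $C_4$: $(0,2,0)$.
\item $P_5$: $N_3=4$.
\item $P_6$: $N_3=1$.
\item $C_5$: $N_3=5$.
\item $C_6$: $N_3=2$.
\end{itemize}
Each entry comes from listing minimal vertex covers of a small path or cycle. For example, in $C_4$ no $3$-set is a minimal cover, and in $P_4=abcde$ the minimal $3$-covers are $\{a,c,d\},\{a,c,e\},\{b,c,e\}$.

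Next come the three parts of the lemma.

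\begin{itemize}
\item Case (1), $\tau=1$. Here $\mathcal{H}$ is $P_1$ or $P_2$. This gives $S_1=3,3$ and $S_2=7,6$.
\item Case (2), $\tau=2$. Here $\mathcal{H}$ is one of $P_1+P_1$, $P_1+P_2$, $P_2+P_2$, $P_3$, $P_4$, $C_3$, $C_4$. Since $N_1=0$, we have $S_2=|\mathcal{H}|+N_2$, and the values are $6,5,5,6,5,6,6$. The indicator kills $S_3$ unless $|\mathcal{H}|\ge4$. That leaves $P_2+P_2$, $P_4$, $C_4$, with $S_3=9,10,10$.
\item Case (3), $\tau=3$. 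Here $N_2=0$, so $S_3=(|\mathcal{H}|+N_3)\mathbf 1_{|\mathcal{H}|\ge4}$. I run through all multisets of components with covering numbers summing to $3$:
 \begin{itemize}
 \item three $\tau{=}1$ components;
 \item one $\tau{=}1$ component plus one of $P_3,P_4,C_3,C_4$;
 \item a single one of $P_5,P_6,C_5,C_6$.
 \end{itemize}
 In each case I compute $N_3$ by the product rule. For instance, $P_1+P_3$ gives $4+2\cdot3=10$, $P_1+C_3$ gives $4+6=10$, $C_5$ gives $5+5=10$, and $P_5$ gives $5+4=9$. Every other case is smaller, and $P_1+P_1+P_1$ is excluded by the indicator.
\end{itemize}

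The main obstacle is bookkeeping rather than ideas. I must ensure the list of types is exhaustive and that each $N_s$ counts only \emph{minimal} transversals; for instance, $3$-sets in $C_4$ or $P_3$ that contain a $2$-cover must be discarded. I would therefore first prove the product rule for minimal transversals of disjoint unions carefully. With that in hand, the whole verification reduces to the single-component table above, which I would double-check by brute enumeration on the few small paths and cycles involved.
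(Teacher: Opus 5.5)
Your proposal is correct and follows essentially the same route as the paper. Both decompose $\mathcal{H}$ into paths and cycles, use additivity of $\tau$ and the product structure of (minimal) transversals over components, and check the finitely many types for each value of $\tau$; the one difference is in part (3), where the paper prunes the list via $|\mathcal{T}(C_m)|\ge|\mathcal{T}(P_m)|$, $|\mathcal{T}(P_{2m-1})|>|\mathcal{T}(P_{2m})|$, $|\mathcal{T}(C_{2m-1})|>|\mathcal{T}(C_{2m})|$ down to $C_5$, $C_3+P_1$, $P_2+P_1+P_1$, while you enumerate every type directly (and your values, e.g.\ $S_2(P_1)=7$ and $S_3(P_1+P_3)=10$, all check out).
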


\begin{proof} Since $\Delta(\mathcal{H})\leq 2$ , each component of $\mathcal{H}$ is a cycle $C_m$ or a path $P_m$. Let $\mathcal{H}_i,1\leq i\leq s$ be all components of $\mathcal{H}$, then $\tau(\mathcal{H})= \displaystyle \sum_{i=1}^s \tau(\mathcal{H}_i)$, while $\tau(\mathcal{H}_i)=\lceil \frac{|\mathcal{H}_i|}{2} \rceil$.
	
	(1) Suppose $\tau(\mathcal{H})=1$ , we may assume $\mathcal{H}=P_1$ or $P_2$, the conclusion follows.
	
	(2) Suppose $\tau(\mathcal{H})=2$ , we may assume $\mathcal{H}=C_4$ or $C_3$ or $P_4$ or $P_3$ or $P_2+P_2$ or $P_2+P_1$ or $P_1+P_1$. The corresponding values of $(N_2(\mathcal{H}),N_3(\mathcal{H})$ are $(2,0)$ or $(3,0)$ or $(1,3)$ or $(3,0)$ or $(1,2)$ or $(2,2)$ or $(4,0)$, the conclusion is easily checked for the seven cases.
	
	(3) Suppose $\tau(\mathcal{H})=3$ and $|\mathcal{H}|\geq 4$, then $S_3(\mathcal{H})=|\mathcal{H}|+N_3(\mathcal{H})$ and $N_3(\mathcal{H})= \displaystyle \prod_{i=1}^s |\mathcal{T}(\mathcal{H}_i)|$. Since  $|\mathcal{T}(C_m)|\geq |\mathcal{T}(P_m)|(m\geq 3)$ and $|\mathcal{T}(P_{2m-1})|> |\mathcal{T}(P_{2m})|(m\geq 1)$ and $|\mathcal{T}(C_{2m-1})|> |\mathcal{T}(C_{2m})|(m\geq 2)$, we may assume $\mathcal{H}=C_5$ or $C_3+P_1$ or $P_2+P_1+P_1$. The corresponding values of $N_3(\mathcal{H})$ are $5$ or $6$ or $4$, the conclusion follows.
	
\end{proof}

Note that $S_2(\mathcal{H})= 7$ if and only if $\mathcal{H}=P_2$. 

Next, we prove Claim 5.3 :

 \begin{proof}
If $\Delta(\mathcal{F}(x))=2$, by $|\mathcal{F}(x)|\geq 4$, $2\leq \tau(\mathcal{F}(x))\leq 3$. Using Lemmas 2.8 and 5.4,  $|\mathcal{G}(\overline{x})|+|\mathcal{F}(x)|\leq S_3(\mathcal{F}(x))\leq 10$.

If  $\Delta(\mathcal{F}(x))=3$, we may assume $\{12,13,14\}\subset \mathcal{F}(x)$ , then $\mathcal{G}(\overline{\{x,1\}})=\{234\}$.

Let us distinguish three cases.

\textbf{Case1}  If there are $y\neq z \in V(\mathcal{F}(x))-[4]$ such that $d(y)=d(z)=3$, then by $\tau(\mathcal{F}(x))\leq 3$ and  $\Delta(\mathcal{F}(x))=3$, $\mathcal{F}(x)=\{1,y,z\}\times \{2,3,4\}$ and $\mathcal{G}(\overline{x})=\{234,1yz\}$, $|\mathcal{G}(\overline{x})|+|\mathcal{F}(x)|=11$ follows.
.

\textbf{Case2} If there is only one $y \in V(\mathcal{F}(x))-[4]$ such that $d(y)=3$, then $\{1,y\}\times \{2,3,4\}\subset \mathcal{F}(x)$. Considering the new graph $\mathcal{H}=\mathcal{F}(x)-\{1,y\}$, if $\mathcal{H}=\emptyset$, then $(N_2(\mathcal{F}(x)),N_3(\mathcal{F}(x)))=(1,1)$, using Lemma 2.8,  $|\mathcal{G}(\overline{x})|+|\mathcal{F}(x)|\leq 3+1+6\leq 10$; if $\mathcal{H}\neq \emptyset$, then for each  $B\neq \{234\}\in  \mathcal{G}(\overline{x})$, $\{1,y\}\subset B$ and $B-\{1,y\}$ is a transversal of $\mathcal{H}$, $\tau(\mathcal{H})=1$ follows. Using Lemma 5.4, $S_1(\mathcal{H})\leq 3$ whence $|\mathcal{G}(\overline{x})|+|\mathcal{F}(x)|\leq (1+N_1(\mathcal{H}))+(6+|\mathcal{H})|)\leq 10$.

\textbf{Case3}  If for all  $y \in V(\mathcal{F}(x))-[4]$ , $d(y)\leq 2$, then considering the new graph $\mathcal{H}=\mathcal{F}(x)-\{1\}$, by  $|\mathcal{F}(x)|\geq 4$ , $\mathcal{H}\neq \emptyset$. For each  $B\neq \{234\}\in  \mathcal{G}(\overline{x})$, $1\in B$ and $B-\{1\}$ is a transversal of $\mathcal{H}$, $\tau(\mathcal{H})\leq 2$ follows.Using Lemma 5.4, $S_2(\mathcal{H})\leq 7$, then $|\mathcal{G}(\overline{x})|+|\mathcal{F}(x)|\leq (1+3N_1(\mathcal{H})+N_2(\mathcal{H}))+(3+|\mathcal{H}|)\leq 11$ with the equality holding only if $\mathcal{H}=P_2$. So we are done .

\end{proof}		
		
	    Now we are able to prove Theorem 1.7:
	    
	  \begin{proof}
We may assume that $|\mathcal{F}|\geq |\mathcal{G}|$ and $|\mathcal{F}(x)|=\Delta(\mathcal{F})$ . Let  $f(1)=f(2)=\infty ,f(3)=10,f(4)=7,f(9)=2,f(k)=10-k, 4<k<9$ and $m=|\mathcal{F}(x)|, n=|\mathcal{G}(x)|$ , then using Claim 2.4 ,5.2 and 5.3, $|\mathcal{F}|\leq 3m$ , $|\mathcal{F}(\overline{x})|\leq f(n)$ and $|\mathcal{G}(\overline{x})|\leq f(m)$. Hence, $|\mathcal{F}||\mathcal{G}|\leq \min\{3m,m+f(n)\}\times \min\{3m,n+f(m)\} =:g(m,n)$.

\begin{claim}\label{}
	$$g(m,n)\leq 121.$$
\end{claim}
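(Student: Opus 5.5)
The claim is a finite verification, so I would organize the cases by the sizes of $m$ and $n$ and use mostly crude bounds. First record the range and the values involved. By Claim 5.1 and its symmetric version for $\mathcal{G}$, we have $1\le m,n\le 9$. The function $f$ takes the values
\[
f(3)=10,\ f(4)=7,\ f(5)=5,\ f(6)=4,\ f(7)=3,\ f(8)=f(9)=2,
\]
with $f(1)=f(2)=\infty$. The key observation is that
\[
j+f(j)\le 11 \quad\text{for all } 4\le j\le 9 .
\]
Indeed the values of $j+f(j)$ are $11,10,10,10,10,11$. The only exceptional index above $2$ is $j=3$, where $3+f(3)=13$.

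\textbf{Case $m\le 3$.} Use $g(m,n)\le 3m\cdot 3m=9m^2\le 81$.

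From now on assume $m\ge 4$, so that $m+f(m)\le 11$.

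\textbf{Case $n\le 2$.} Here $f(n)=\infty$, so the first factor is $3m$. The second factor is at most $n+f(m)\le 2+f(m)$. Checking $m=4,\dots,9$ gives $3m(2+f(m))\in\{108,105,108,105,96,108\}$, all below $121$.

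\textbf{Case $n=3$.} Use the first factor $\min\{3m,m+10\}$ and the second factor $3+f(m)$. The products for $m=4,\dots,9$ are $120,120,112,102,90,95$. This is the tight case, and the one place where the exact value $f(4)=7$ and the $\min$ with $3m$ are genuinely needed.

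\textbf{Case $n\ge 4$.} Drop the $3m$ terms and apply AM--GM, which gives
\[
g(m,n)\le (m+f(n))(n+f(m))\le\Bigl(\frac{(m+f(m))+(n+f(n))}{2}\Bigr)^2\le 11^2=121 .
\]

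The only delicate step is the case $n=3$, since $f(3)=10$ breaks the uniform bound $j+f(j)\le 11$. There the two factors must be treated asymmetrically, using $3m$ for small $m$ and the small value of $f(m)$ otherwise. All other cases hold with room to spare. Equality $121$ occurs only for $n\ge4$ with $m+f(m)=n+f(n)=11$ and $m+f(n)=n+f(m)$, that is $m,n\in\{4,9\}$ with $m=n$. This matches the equality cases recorded in Claim 5.3 and would be useful later for the characterization.
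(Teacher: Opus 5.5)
Your proposal is correct and follows essentially the same route as the paper: the bound $(3m)^2$ for $m\le 3$, the AM--GM step using $j+f(j)\le 11$ for $m,n\ge 4$, and a direct check when $n\le 3$. The only difference is bookkeeping. The paper treats $(m,n)=(4,3)$ together with the $n\le 2$ cases via the bound $3m(n+f(m))$, whereas you handle all of $n=3$ at once using the factor $\min\{3m,m+10\}$.
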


  \begin{proof}
  	If $m\leq 3$, then $g(m,n)\leq (3m)^2 \leq 81$.
  	
  	If  $m\geq 4$ and $ n\geq 4$, then   $g(m,n)\leq (m+f(n))(n+f(m))\leq (\frac{22}{2})^2\leq 121$.
  	
  	If $m>4$ and $ n=3$, then $g(m,n)\leq (m+f(n))(n+f(m))\leq 120$.
  
  	In other case, we have $g(m,n)\leq 3m(n+f(m)) \leq 120$.

  	\end{proof}

In summary, $|\mathcal{F}||\mathcal{G}|\leq 121$ with equality holding only if $(|\mathcal{F}(x)|,|\mathcal{G}(\overline{x})|,|\mathcal{G}(x)|,|\mathcal{F}(\overline{x})|)=(9,2,9,2)$ or $(4,7,4,7)$. If the latter, then by proof of Claim 5.3 , $\mathcal{G}(x)=\{12,13,14,25\}$ or $\{12,13,14,34\}$, hence $|\mathcal{F}(1)|\geq |\mathcal{F}(\overline{x})|-|\mathcal{F}(\overline{1})|=6$. This contradicts the maximality of $|\mathcal{F}(x)|$. So must be the former, then by proof of Claim 5.3 , $\mathcal{F}(x)=\{a,b,c\}\times \{y,z,w\},\mathcal{G}(\overline{x})=\{abc,yzw\}$ and $\mathcal{G}(x)=\{a',b',c'\}\times \{y',z',w'\},\mathcal{G}(\overline{x})=\{a'b'c',y'z'w'\}$, since $\mathcal{F}(\overline{x})$ and $\mathcal{G}(\overline{x})$ are cross-intersecting, we may assume $a'=a,b'=y,y'=b,z'=z$, then $\mathcal{F}=\{x\}\times \{a,b,c\}\times \{y,z,w\}\cup \{ayc',bzw'\}$ and 
$\mathcal{G}=\{x\}\times \{a,y,c'\}\times \{b,z,w'\}\cup \{abc,yzw\}$ .	Based on the size of $\{c,w\}\cap \{c',w'\}$, there are three types of $(\mathcal{F},\mathcal{G})$ up to isomorphism.
		
\end{proof}
		
\noindent\textbf{Remark 5.6.} A maximal pair for which the product of their sizes large than 100 is not necessarily optimal, such as
$\mathcal{F}=\{125,136,146,156,256,346,347,356\}$ and  $\mathcal{G}=\{123,135,136,145,146,167,236,246,267,345,356,456,567\}$ . 
		
	\section{Proof of Theorem 1.8}
	
	First, we prove $m(2,2)=9$ :
	
	Let $\mathcal{F}$ and $ \mathcal{G}$ be graphs forming an optimal pair, using (2.1) and Lemma 2.6, $\Delta(\mathcal{F})= 2$. Using Claim 2.3 and  Lemma 5.4,  $|\mathcal{F}|+|\mathcal{G}|=S_2(\mathcal{F})\leq 6$. Now $m(2,2)\leq 9$ follows. By the proof of Lemma 5.4, $\mathcal{F}=\{12,23,13\} =\mathcal{G}$ or $\mathcal{F}=\{12,23,34\}, \mathcal{G}=\{13,24,23\}$ are the only cases attaining $|\mathcal{F}||\mathcal{G}|=9$. Thus these are the only choices for optimal pairs.

	Next, we prove $m(3,2)=25$ :

	Let the 3-graph $\mathcal{F}$ and the graph $ \mathcal{G}$ form an optimal pair and  $|\mathcal{F}(x)|=\Delta(\mathcal{F})\geq 2$ .
	\begin{claim}\label{}
	\[
|\mathcal{F}(x)||\mathcal{G}(\overline{x})|\leq 9.
	\]
\end{claim}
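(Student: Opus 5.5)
The plan is to work with the graph $\mathcal{H}:=\mathcal{G}(\overline{x})$ rather than with $\mathcal{F}(x)$ directly. Here $k=3$, $\ell=2$ and $|\{x\}|=1<k$. Since $\mathcal{F}(x)\neq\emptyset$, Lemma 2.5 gives $\tau(\mathcal{H})=k-1=2$ and $\mathcal{F}(x)=\mathcal{T}(\mathcal{H})$. So the claim reduces to a statement about ordinary graphs:
\[
|\mathcal{H}|\cdot|\mathcal{T}(\mathcal{H})|\leq 9 \quad\text{for every graph } \mathcal{H}\subset\mathcal{G} \text{ with } \tau(\mathcal{H})=2.
\]
We also record two a priori bounds from Lemma 2.6. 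First, every vertex has degree at most $k^{\ell-1}=3$ in $\mathcal{G}$, hence $\Delta(\mathcal{H})\leq 3$. Second, $|\mathcal{F}(x)|\leq \ell^{k-1}=4$.

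\textbf{Case $\Delta(\mathcal{H})\leq 2$.} Lemma 5.4(2) applies and gives
\[
S_2(\mathcal{H})=|\mathcal{H}|+3N_1(\mathcal{H})+N_2(\mathcal{H})\leq 6 .
\]
Since $\tau(\mathcal{H})=2$, we have $N_1(\mathcal{H})=0$. Also every $2$-element transversal is minimal, so $N_2(\mathcal{H})=|\mathcal{T}(\mathcal{H})|$. Hence $|\mathcal{H}|+|\mathcal{T}(\mathcal{H})|\leq 6$, and by AM--GM the product is at most $3\cdot 3=9$.

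\textbf{Case $\Delta(\mathcal{H})=3$.} Let $v$ be a vertex of degree $3$ in $\mathcal{H}$. A $2$-set avoiding $v$ cannot meet all three edges at $v$, so every $T\in\mathcal{T}(\mathcal{H})$ contains $v$. Thus $\mathcal{T}(\mathcal{H})$ is in bijection with the vertices $u$ that cover $\mathcal{H}-v$. The graph $\mathcal{H}-v$ (its edges avoiding $v$) is nonempty, since $\tau(\mathcal{H})=2$, and it has $\tau=1$.
\begin{itemize}
\item If $\mathcal{H}-v$ has at least two edges, the covering vertex $u$ is unique. Then $|\mathcal{T}(\mathcal{H})|=1$, which contradicts $|\mathcal{F}(x)|=\Delta(\mathcal{F})\geq 2$ from (2.1). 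In any case the product is at most $|\mathcal{H}|\leq 3+3$, using $\Delta(\mathcal{H})\leq 3$ and $\tau(\mathcal{H})=2$.
\item If $\mathcal{H}-v$ is a single edge $ab$, then $\mathcal{T}(\mathcal{H})=\{va,vb\}$ and $|\mathcal{H}|=4$. The product is $8\leq 9$.
\end{itemize}

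The only point needing care is the case distinction on $\Delta(\mathcal{H})$. Lemma 5.4 requires maximum degree at most $2$, so the degree-$3$ case must be handled separately. The bound $\Delta(\mathcal{G})\leq 3$ from Lemma 2.6 is what keeps $|\mathcal{H}|$ small there. I expect no further obstacle; the remaining checks are finite.
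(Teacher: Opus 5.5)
Your argument is correct, but it is organized differently from the paper's. The paper splits on $\tau(\mathcal{F}(x))$. If $\tau(\mathcal{F}(x))=2$, then $\mathcal{F}(x)$ and $\mathcal{G}(\overline{x})$ form a cross-intersecting pair of graphs with both covering numbers equal to $2$, so the product is at most the just-established value $m(2,2)=9$. If $\tau(\mathcal{F}(x))=1$, Lemma 2.6 applied to $\mathcal{F}(\{x,y\})$ forces $|\mathcal{F}(x)|=2$, and Lemma 2.8 with $(N_1,N_2)=(1,1)$ gives $|\mathcal{G}(\overline{x})|\leq 4$.

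You instead use Lemma 2.5 to identify $\mathcal{F}(x)=\mathcal{T}(\mathcal{G}(\overline{x}))$ exactly. This turns the claim into an inequality about a single graph $\mathcal{H}$, and you split on $\Delta(\mathcal{H})$ rather than on $\tau(\mathcal{F}(x))$. The two splits do not coincide: a vertex of degree $3$ forces $\tau(\mathcal{F}(x))=1$, but $\Delta(\mathcal{H})\leq 2$ allows both values. Because $\mathcal{H}$ is not itself part of an optimal $(2,2)$ pair, you do not get $\Delta(\mathcal{H})\leq 2$ for free, as the paper does inside its proof of $m(2,2)$. That is why you need $\Delta(\mathcal{G})\leq 3$ from Lemma 2.6 and a separate treatment of a degree-$3$ vertex, which you handle correctly.

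The paper's route is shorter, since it reuses $m(2,2)$ and the general counting Lemma 2.8 as black boxes. Yours needs neither and relies only on Lemmas 2.5, 2.6 and 5.4(2). Combined with the enumeration in the proof of Lemma 5.4, it also shows that the value $9$ occurs only when $\mathcal{G}(\overline{x})$ is $C_3$ or $P_3$.
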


\begin{proof}
	Clearly , $\tau(\mathcal{F}(x))\leq 2$ and $\tau(\mathcal{G}(\overline{x}))=2$, hence $|\mathcal{G}(\overline{x})|\geq 2$.
	
	If $\tau(\mathcal{F}(x))=2$, as $\mathcal{F}(x)$ and $\mathcal{G}(\overline{x})$ are cross-intersecting, the claim follows.
	
	If $\tau(\mathcal{F}(x))=1$, let $\{y\}$ be a transversal of $\mathcal{F}(x)$, then $|\mathcal{F}(x)|=|\mathcal{F}(\{x,y\})|$. Using Lemma 2.6, $|\mathcal{F}(\{x,y\})|\leq 2$ whence $|\mathcal{F}(x)|=2$. So $\mathcal{F}(x)=\{y1,y2\}$, hence$(N_s(\mathcal{F}(x)))_{s=1}^{2}=(1,1)$ , using Lemma 2.8 , $|\mathcal{G}(\overline{x})|\leq 4$.
\end{proof}

Since $\mathcal{G}(x)$ and $\mathcal{F}(\overline{x})$ are  cross-intersecting and every member in $\mathcal{G}(x)$ of size 1,  $\tau(\mathcal{F}(\overline{x}))=1$ and  for all $A\in \mathcal{F}(\overline{x}), \bigcup{\mathcal{G}(x)}\subset A$ follows. Hence $|\mathcal{F}(\overline{x})|\leq |\mathcal{F}(x)|$ and  $\mathcal{F}(\overline{x})\subset \mathcal{F}( \widehat{\bigcup{\mathcal{G}(x)}})$ . Suppose $|\mathcal{G}(x)|=t,1\leq t\leq 3$, then using Lemma 2.6 , $|\mathcal{F}(\overline{x})|\leq 2^{3-t}$. 

If $2\leq t\leq 3$, then $|\mathcal{G}(x)|+|\mathcal{F}(\overline{x})|\leq  t+2^{3-t}\leq 4$  . By Claim 6.1, $|\mathcal{F}(x)|+|\mathcal{G}(\overline{x})|\leq 6$. Hence  $|\mathcal{F}||\mathcal{G}|=(|\mathcal{F}(x)|+|\mathcal{F}(\overline{x})|)(|\mathcal{G}(x)|+|\mathcal{G}(\overline{x})|)\leq (\frac{10}{2})^2\leq 25$.

If $t=1$, then $|\mathcal{F}||\mathcal{G}|\leq 2|\mathcal{F}(x)|(1+|\mathcal{G}(\overline{x})|)\leq 24<25$.

In summary, $|\mathcal{F}||\mathcal{G}|\leq 25$ . Apart from the construction in Theorem 1.5, the equality holds if $\mathcal{F}=\{123,124,x34,x35,x45\}, \mathcal{G}=\{x1,x2,34,35,45\}$ .

Finally, we prove Theorem 1.8 by induction on $k$ :

	When $k=2 $ or $3$, we are done. Suppose $k\geq 4$ and the theorem holds for all $<k$.
	Let the $k$-graph $\mathcal{F}$ and the graph $ \mathcal{G}$ form an optimal pair, and  $|\mathcal{F}(x)|=\Delta(\mathcal{F})\geq 2$ .
    
    \begin{lemma}\label{}  
    	\[
    	|\mathcal{F}(x)||\mathcal{G}(\overline{x})|\leq m(k-1,2).
    	\]
    \end{lemma}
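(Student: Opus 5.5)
Here $\mathcal{F}$ is a $k$-graph with $\tau(\mathcal{F})=2$, $\mathcal{G}$ is a graph with $\tau(\mathcal{G})=k$, the pair is optimal, and $x$ has maximum degree in $\mathcal{F}$. The plan is to show that $\mathcal{F}(x)$ (a $(k-1)$-graph) and $\mathcal{G}(\overline{x})$ (a graph) are essentially a pair admissible for $m(k-1,2)$. This follows the pattern of Claim 6.1, where $\mathcal{F}(x)$ and $\mathcal{G}(\overline{x})$ were compared with $m(2,2)=9$.

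First, since $\mathcal{F}(x)\neq\emptyset$ and $|\{x\}|=1<k$, Lemma 2.5 gives $\tau(\mathcal{G}(\overline{x}))=k-1$ and $\mathcal{F}(x)=\mathcal{T}(\mathcal{G}(\overline{x}))$. In particular $\mathcal{F}(x)$ and $\mathcal{G}(\overline{x})$ are cross-intersecting. Every $G\in\mathcal{G}(\overline{x})$ is a transversal of $\mathcal{F}(x)$, so $\tau(\mathcal{F}(x))\le 2$. We split on this value.

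\textbf{Case $\tau(\mathcal{F}(x))=2$.} Then $\mathcal{F}(x)$ is a $(k-1)$-graph with covering number $2$, and $\mathcal{G}(\overline{x})$ is a graph with covering number $k-1$. The two families are cross-intersecting, so the definition of $m(k-1,2)$ gives the inequality directly.

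\textbf{Case $\tau(\mathcal{F}(x))=1$.} Say $\{y\}$ is a transversal of $\mathcal{F}(x)$. Then $|\mathcal{F}(x)|=|\mathcal{F}(\{x,y\})|\le 2^{k-2}$ by Lemma 2.6. For $\mathcal{G}(\overline{x})$ we use Lemma 2.8, which gives $|\mathcal{G}(\overline{x})|\le 2N_1(\mathcal{F}(x))+N_2(\mathcal{F}(x))$.
\begin{itemize}
\item Minimal transversals of size $1$: these are the vertices in $\bigcap\mathcal{F}(x)$. Call this common core $C\ni y$, with $|C|=c$, so $N_1=c$.
\item Minimal transversals of size $2$: these are pairs $\{u,v\}$ outside $C$ that meet every edge. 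They correspond to transversals of size $2$ of the $(k-1-c)$-graph $\mathcal{F}(x)(C)$.
\end{itemize}
By Lemma 2.6, $|\mathcal{F}(x)|=|\mathcal{F}(\{x\}\cup C)|\le 2^{k-1-c}$. The pair count is bounded by Theorem 1.4 (Gyárfás) if $\tau(\mathcal{F}(x)(C))=2$, giving at most $(k-1-c)^2$. If $\tau(\mathcal{F}(x)(C))=1$, that transversal vertex would lie in every edge, i.e.\ in $C$ itself, contradicting the definition of $C$; this sub-case is not possible.

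So
\[
|\mathcal{F}(x)||\mathcal{G}(\overline{x})|\le 2^{k-1-c}\bigl(2c+(k-1-c)^2\bigr).
\]
We then compare this with $m(k-1,2)\ge (k-1)2^{k-1}$. This lower bound holds by induction for $k-1\ge4$, and for $k-1=3$ we have $m(3,2)=25$. The ratio of the bound to $(k-1)2^{k-1}$ is $2^{-c}(2c+(k-1-c)^2)/(k-1)$, which is maximized at $c=1$.

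This last comparison is the main obstacle. At $c=1$ the bound is roughly $2^{k-2}(k-2)^2$, which exceeds $(k-1)2^{k-1}$ for large $k$. So the crude estimate is not enough, and the first thing I would try is to sharpen the pair count. The size-$2$ transversals $\{u,v\}$ of $\mathcal{F}(x)(C)$, together with $x$, must lie in $\mathcal{G}$, so they are constrained by $\Delta(\mathcal{F})$ maximality and by Lemma 2.7. Alternatively, one can avoid the count altogether: enlarge $\mathcal{F}(x)$ to a family with $\tau=2$ by adding an edge avoiding $y$, keeping cross-intersection with $\mathcal{G}(\overline{x})$ because every $G\in\mathcal{G}(\overline{x})$ already contains $y$ or a suitable pair, and then reduce to the first case. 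I expect this reduction in the $\tau(\mathcal{F}(x))=1$ case to be where the real work lies.
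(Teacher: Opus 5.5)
Your case $\tau(\mathcal{F}(x))=2$ is fine. The case $\tau(\mathcal{F}(x))=1$ is not proved, however, and neither remedy you suggest closes it.

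Bounding $|\mathcal{F}(x)|\le 2^{k-1-c}$ and $N_2(\mathcal{F}(x))\le (k-1-c)^2$ separately loses a factor of order $k$, as you observed. Moreover, the coefficient of $N_1$ in Lemma 2.8 is $k^{\ell-1}=k$, not $2$, since a vertex of $C$ may lie in up to $k$ edges of $\mathcal{G}$.

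The enlargement idea cannot work at all. By Lemma 2.5, $\mathcal{F}(x)=\mathcal{T}(\mathcal{G}(\overline{x}))$, so every $(k-1)$-set meeting all members of $\mathcal{G}(\overline{x})$ already belongs to $\mathcal{F}(x)$ and therefore contains $y$. There is no edge avoiding $y$ that can be added while preserving cross-intersection.

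The missing idea is to bound the product jointly, one level down, using the induction on $k$ inside which this lemma is proved. Put $S=\{x\}\cup C$ and $s=|S|=c+1$, so $2\le s\le k-1$. You showed $\tau(\mathcal{F}(S))=2$, and Lemma 2.5 gives $\tau(\mathcal{G}(\overline{S}))=k-s$. Hence $\mathcal{F}(S),\mathcal{G}(\overline{S})$ is an admissible pair for $m(k-s,2)$, and
\[
|\mathcal{F}(x)||\mathcal{G}(\overline{S})|=|\mathcal{F}(S)||\mathcal{G}(\overline{S})|\le m(k-s,2)\le 1+(k-s)2^{k-s}
\]
by the induction hypothesis (which includes $m(1,2)=2$, $m(2,2)=9$, $m(3,2)=25$).

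The other members of $\mathcal{G}(\overline{x})$ meet $C$, so there are at most $(s-1)k$ of them, and $|\mathcal{F}(x)|\le 2^{k-s}$. Altogether
\[
|\mathcal{F}(x)||\mathcal{G}(\overline{x})|\le 1+s(k-1)2^{k-s}\le 1+(k-1)2^{k-1}.
\]
Equality would force both $|\mathcal{F}(S)|=2^{k-s}$, which is even, and $|\mathcal{F}(S)||\mathcal{G}(\overline{S})|=1+(k-s)2^{k-s}$, which is odd. That is impossible, so the product is at most $(k-1)2^{k-1}$.

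Finally, $(k-1)2^{k-1}\le m(k-1,2)$ holds directly, with no induction needed: take $k-1$ disjoint edges together with their $2^{k-1}$ transversals. This is exactly the paper's argument; your core $C$ is its minimal set $S$ with $x$ removed.
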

    \begin{proof}
	Clearly , $\tau(\mathcal{F}(x))\leq 2$ and $\tau(\mathcal{G}(\overline{x}))=k-1$.
	Since $|\mathcal{F}(x)|\geq 2$ , we can choose a minimal $S\subset X$ such that $x\in S$ and $\tau(\mathcal{F}(S))= 2$, then $1\leq s=|S|\leq k-1$. Using Lemma 2.6, $|\mathcal{F}(x)|=|\mathcal{F}(S)|\leq 2^{k-s}$. Since $\mathcal{F}(S)$ and $\mathcal{G}(\overline{S})$ are cross-intersecting, $|\mathcal{F}(S)| |\mathcal{G}(\overline{S})|\leq m(k-s,2)$. On the other hand,  $|\mathcal{G}(\overline{x})|\leq (s-1)k +|\mathcal{G}(\overline{S})| $. So 	$|\mathcal{F}(x)||\mathcal{G}(\overline{x})|\leq m(k-s,2)+ 2^{k-s}(s-1)k\leq 1+(k-s)2^{k-s}+2^{k-s}(s-1)k=1+2^{k-s}(k-1)s\leq 1+(k-1)2^{k-1}$ . If the equality holds , then $|\mathcal{F}(S)| |\mathcal{G}(\overline{S})|= m(k-s,2)=2^{k-s}+1$ , $k-s=2$ or $3$ and $|\mathcal{F}(S)|$ is odd follows, hence $s=1$ , the conclusion follows. If the equality does not hold , then using induction hypothesis, 	$|\mathcal{F}(x)||\mathcal{G}(\overline{x})|\leq (k-1)2^{k-1} \leq m(k-1,2)$.

\end{proof}
Since $\mathcal{G}(x)$ and $\mathcal{F}(\overline{x})$ are  cross-intersecting and every member in $\mathcal{G}(x)$ of size 1,  $\tau(\mathcal{F}(\overline{x}))=1$ and  for all $A\in \mathcal{F}(\overline{x}), \bigcup{\mathcal{G}(x)}\subset A$ follows. Hence $|\mathcal{F}(\overline{x})|\leq |\mathcal{F}(x)|$ and  $\mathcal{F}(\overline{x})\subset \mathcal{F}( \widehat{\bigcup{\mathcal{G}(x)}})$ . Suppose $|\mathcal{G}(x)|=t,1\leq t\leq k$, then using Lemma 2.6 , $|\mathcal{F}(\overline{x})|\leq 2^{k-t}$. So
 
 $$|\mathcal{F}||\mathcal{G}|\leq (|\mathcal{F}(x)|+\min\{|\mathcal{F}(x)|,2^{k-t}\})(t+|\mathcal{G}(\overline{x})|).$$ 
Let $m=|\mathcal{F}(x)|,n=|\mathcal{G}(\overline{x})|$, then by Lemma 6.2 and $\tau(\mathcal{G}(\overline{x}))=k-1$, we have  $mn\leq m(k-1,2)$ and $n\geq k-1$.

If $m\leq 2^{k-t}$, then $|\mathcal{F}||\mathcal{G}|\leq 2m(t+n)=2(mn+mt)\leq 2(1+(k-1)2^{k-1}+t2^{k-t}-1 )\leq  k2^k$. The second inequality holds because $mn=m(k-1,2)=1+(k-1)2^{k-1}$ and $m=2^{k-t}$ cannot hold at the same time.

If $m\geq 2^{k-t}$, then $k-1\leq n\leq (k-1)2^{t-1}$ and $|\mathcal{F}||\mathcal{G}|\leq (m+2^{k-t})(t+n)\leq t2^{k-t}+tm+2^{k-t}n+m(k-1,2)=:f(m,n)$.
 	\begin{claim}\label{}
 		
 	$f(m,n)_{max}=\max\{f(2^{k-t},(k-1)2^{t-1}),f(2^{k-1},k-1)\}$.
 	\end{claim}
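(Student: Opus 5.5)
Write $M:=m(k-1,2)$. The function
\[
f(m,n)=t2^{k-t}+tm+2^{k-t}n+M
\]
is affine in $(m,n)$ with nonnegative coefficients. The feasible region in this case is
\[
D=\{(m,n):\ m\ge 2^{k-t},\ k-1\le n\le (k-1)2^{t-1},\ mn\le M\},
\]
together with $m\le 2^{k-1}$, which holds because $m=|\mathcal{F}(x)|=|\mathcal{F}(\{x\})|\le 2^{k-1}$ by Lemma 2.6.

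\textbf{Step 1: reduce to the hyperbola.} Since $f$ is increasing in both variables, any maximizer can be pushed up in $m$ or $n$ until it meets either the upper constraint boundary $mn=M$ or one of the box walls $m=2^{k-1}$, $n=(k-1)2^{t-1}$. I will argue that a corner of the box lies outside or on the hyperbola. For instance, $2^{k-1}\cdot (k-1)2^{t-1}\ge M$ whenever $t\ge 2$, and when $t=1$ the box in $n$ degenerates to $n=k-1$. Hence the maximum is attained on the portion of $D$ where $mn$ is as large as the constraints allow.

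\textbf{Step 2: convexity on the boundary curve.} Parametrize the upper boundary as $n=\min\{M/m,\,(k-1)2^{t-1}\}$ with $m\ge 2^{k-t}$, and $m=\min\{2^{k-1},M/n\}$ with $n\ge k-1$. On the hyperbolic arc, $h(m)=tm+2^{k-t}M/m+\text{const}$ is convex in $m$. A convex function on an interval attains its maximum at an endpoint. On the straight pieces $f$ is linear, so again the maximum is at an endpoint.

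\textbf{Step 3: identify the endpoints.} The extreme points of the whole boundary are, on one side, the point with $n=(k-1)2^{t-1}$ and $m$ as small as allowed, namely $m=2^{k-t}$. On the other side they are $m=2^{k-1}$ and $n=k-1$. The intermediate kinks, where the hyperbola meets a wall, are dominated by these endpoints because along each wall $f$ is monotone toward the outer corner. This yields $\max f=\max\{f(2^{k-t},(k-1)2^{t-1}),\,f(2^{k-1},k-1)\}$.

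The step I expect to be delicate is justifying that these two corner points really are the relevant extremes even though they may violate $mn\le M$ slightly. For example, $2^{k-1}(k-1)=M-1$ is fine, but $2^{k-t}(k-1)2^{t-1}=(k-1)2^{k-1}<M$ also holds, so both corners are feasible. Once feasibility of both corners is checked this way, the convexity argument shows that every point of $D$ is dominated by one of them. The remaining checks are routine monotonicity comparisons along the walls.
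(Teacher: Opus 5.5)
There is a genuine gap, and it is exactly at $k=4$. For $k\ge 5$ your argument is correct and is the paper's. There $M=m(k-1,2)=(k-1)2^{k-1}$, so the hyperbola $mn=M$ passes through both corners $(2^{k-t},(k-1)2^{t-1})$ and $(2^{k-1},k-1)$. Every feasible point is dominated by $(M/n,n)$ with $n\in[k-1,(k-1)2^{t-1}]$, and convexity of $n\mapsto tM/n+2^{k-t}n$ finishes the job.

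For $k=4$, however, $M=m(3,2)=25=(k-1)2^{k-1}+1$, and the hyperbola $mn=25$ runs strictly above both corners. Your Step 3 then gets the domination backwards. Along the wall $n=(k-1)2^{t-1}$ the function $f$ increases in $m$, so the kink where this wall meets the hyperbola dominates the corner $(2^{k-t},(k-1)2^{t-1})$, not the other way round. The same happens on the wall $m=2^{k-1}$.

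Over your real region $D$ the claim is actually false. For $k=4$, $t=2$ you have $f(m,n)=33+2m+4n$, so the two corners give $65$ and $61$. But $(25/6,\,6)$ lies in $D$ and gives $f=196/3>65$. Checking that the corners are feasible, as you do at the end, does not help; what must be shown is that no feasible point beats them, and a continuous argument cannot show this here.

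The missing idea is integrality, and this is precisely how the paper splits the proof. Since $m,n$ are integers, one of two cases holds.

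\begin{itemize}
\item[(a)] $mn\le (k-1)2^{k-1}$. Run your convexity argument with the hyperbola $mn=(k-1)2^{k-1}$ in place of $mn=M$. That curve does pass through both corners, so the endpoint argument goes through.
\item[(b)] $mn=m(k-1,2)=(k-1)2^{k-1}+1$. By the induction hypothesis this happens only for $k=4$. Then $mn=25$ with $m\ge 2$, $n\ge 3$ forces $m=n=5$, and $5\le 3\cdot 2^{t-1}$ forces $t\ge 2$.
\end{itemize}

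The single point in case (b) must be checked by hand:
\begin{itemize}
\item for $t=2$: $f(5,5)=63<65=f(4,6)$;
\item for $t=3$: $f(5,5)=56<61=f(8,3)$;
\item for $t=4$: $f(5,5)=54<64=f(8,3)$.
\end{itemize}
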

\begin{proof}
	If $mn\leq (k-1)2^{k-1}$, then  $tm+2^{k-t}n\leq t\frac{(k-1)2^{k-1}}{n} +2^{k-t}n=:g(n)\leq \max\{g(k-1),g((k-1)2^{t-1})\}=\max\{f(2^{k-t},(k-1)2^{t-1}),f(2^{k-1},k-1)\}$
	
	If  $mn=m(k-1,2)=(k-1)2^{k-1}+1$, then by induction hypothesis, $k=4$ and $m=n=5$ and since $n\leq (k-1)2^{t-1}$, $t\in\{2,3,4\}$ follows. If $t=2$ , $f(5,5)<f(4,6)=f(2^{k-t},(k-1)2^{t-1})$; If $t=3$ or 4 , $f(5,5)<f(8,3)=f(2^{k-1},k-1)$; So the claim holds.
\end{proof}

While $f(2^{k-t},(k-1)2^{t-1})=t2^{k-t+1}+(k-1)2^{k}+1\leq k2^{k}+1$ with the equality holding if and only if $t=1$ or $2$.

	\begin{claim}\label{}
$f(2^{k-1},k-1)=(t+k-1)(2^{k-t}+2^{k-1})+1\leq k2^{k}+1$ .
\end{claim}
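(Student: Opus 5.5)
The claim reduces to an elementary inequality in $t$. The key inputs are the value of $m(k-1,2)$ from the induction hypothesis and a convexity argument in $t$ over $1\le t\le k$.

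\textbf{Step 1: expand $f$.} Substituting $m=2^{k-1}$ and $n=k-1$ into the definition of $f(m,n)$ gives
\[
f(2^{k-1},k-1)=t2^{k-t}+t2^{k-1}+(k-1)2^{k-t}+m(k-1,2)=(t+k-1)2^{k-t}+t2^{k-1}+m(k-1,2).
\]

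\textbf{Step 2: use the induction hypothesis.} For $k-1\ge 4$ the induction hypothesis gives $m(k-1,2)=(k-1)2^{k-1}$. For $k=4$ we have $m(3,2)=25=3\cdot 2^{3}+1$. In either case $m(k-1,2)\le (k-1)2^{k-1}+1$, so
\[
f(2^{k-1},k-1)\le (t+k-1)(2^{k-t}+2^{k-1})+1,
\]
which is the identity in the statement, read as an upper bound. It therefore remains to show
\[
h(t):=(t+k-1)(2^{k-t}+2^{k-1})\le k2^k \qquad\text{for } 1\le t\le k,\ k\ge 4.
\]

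\textbf{Step 3: reduce to the endpoints by convexity.} Write $h(t)=a(t)+(t+k-1)2^{k-1}$ with $a(t)=(t+k-1)2^{k-t}$. The second summand is linear in $t$. For $a$, a direct computation gives
\[
a(t+1)-a(t)=-(t+k-2)\,2^{k-t-1}.
\]
The quantity $(t+k-2)2^{k-t-1}$ is decreasing in $t$, because the ratio of consecutive values is $(t+k-1)/\bigl(2(t+k-2)\bigr)<1$ for $t+k\ge 4$. So the first differences of $a$ increase, $a$ is discretely convex, and hence $h$ is discretely convex on $\{1,\dots,k\}$. It follows that $\max h$ is attained at $t=1$ or $t=k$.

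\textbf{Step 4: check the endpoints.} At $t=1$ we get $h(1)=k(2^{k-1}+2^{k-1})=k2^k$. At $t=k$ we get $h(k)=(2k-1)2^{k-1}+(2k-1)$, and $h(k)\le k2^k$ is equivalent to $2k-1\le 2^{k-1}$, which holds for $k\ge 3$. This proves the claim.

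The same computation gives the equality information needed afterwards. Equality in $h(t)\le k2^k$ forces $t=1$, since $h(2)=3(k+1)2^{k-2}<k2^k$ for $k>3$ and convexity then keeps all $t\ge 2$ strictly below $k2^k$. The only point requiring real care is the convexity step, in particular checking the ratio estimate at the smallest admissible values $t=1$, $k=4$.
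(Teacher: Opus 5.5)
Your proof is correct, but it proves the key inequality differently from the paper. The paper does not use convexity. It treats $t=k$ directly, getting $(2k-1)(1+2^{k-1})+1\le k2^k$ from $2k\le 2^{k-1}$ (true for $k\ge 4$). For $t\le k-1$ it factors the slack as
\[
k2^k-(t+k-1)(2^{k-t}+2^{k-1})=2^{k-t}\bigl(2^{t-1}(k-t+1)-(t+k-1)\bigr),
\]
and uses $2^{t-1}\ge t$ to bound the bracket below by $(k-t-1)(t-1)\ge 0$. That gives an explicit lower bound on the slack at every $t$, with no structural argument needed.

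Your route replaces the factorization with the difference formula $a(t+1)-a(t)=-(t+k-2)2^{k-t-1}$ and a ratio check. After that, only the endpoint values $h(1)=k2^k$ and $h(k)$ need checking. It also makes the equality case clear: equality holds only at $t=1$, and $h(k)<k2^k$ strictly. That strict bound is exactly what the paper later needs to exclude $t=k$ in its parity argument.

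Your Step 2 is more careful than the paper. The displayed identity $f(2^{k-1},k-1)=(t+k-1)(2^{k-t}+2^{k-1})+1$ is exact only for $k=4$, where $m(3,2)=25=3\cdot 2^3+1$. For $k\ge 5$ one has $m(k-1,2)=(k-1)2^{k-1}$, so the identity must be read as an upper bound, as you do.

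One small slip: $2k-1\le 2^{k-1}$ fails at $k=3$ ($5>4$) and holds only for $k\ge 4$. This is harmless, since $k\ge 4$ throughout this induction step.
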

\begin{proof}
	If $t=k$, then $f(2^{k-1},k-1)=(2k-1)(1+2^{k-1})+1\leq k2^{k}$. If $t\leq k-1$, then $k2^{k}-(t+k-1)(2^{k-t}+2^{k-1})=2^{k-1}(k-t+1)-2^{k-t}(t+k-1)=2^{k-t}(2^{t-1}(k-t+1)-(t+k-1))$, while $2^{t-1}\geq t$, $2^{t-1}(k-t+1)-(t+k-1)\geq t(k-t+1)-(t+k-1)=tk-t^2-k+1=(k-t-1)(t-1)\geq 0$.
\end{proof}

	So  $|\mathcal{F}||\mathcal{G}|\leq k2^{k}+1$. The equality does not hold , suppose the contrary, then $mn=m(k-1,2)=1+(k-1)2^{k-1}$ and $t\neq k$, hence $m$ are both odd and even, contradiction. Thus, $|\mathcal{F}||\mathcal{G}|\leq k2^{k}$.

In summary, $|\mathcal{F}||\mathcal{G}|\leq k2^k$ and the equality holds if $\mathcal{G}=\{12,34,\dots,(2k-1)(2k)\}$ , $\mathcal{F}=\{1,2\}\times \{3,4\}\times \dots \times \{2k-1,2k\}$.

	\section{Concluding remark}
	
	In the present paper we proved that $m(2,2)=3^2$ and $ m(3,3)=11^2$ . For $m(2,2)$, there are two non-isomorphic optimal pairs; For $m(3,3)$, there are three non-isomorphic optimal pairs. But all these pairs can be obtained from the construction in Theorem 1.5. Together with (1.8) for large values of $k$ it seems natural to make the following conjecture.
	\begin{conjecture} For all $k\geq 2$,
		$$m(k,k)=(k^{k-1}+k-1)^2.$$
		\end{conjecture}

	The authors are indebted to Jian Wang for helpful comments concerning Theorem 1.7. He pointed out that some results of [6] can be used to prove $|\mathcal{F}|+|\mathcal{G}| \leq  23$ in the case  $\mathcal{F} \cap \mathcal{G} \neq  \emptyset$. We succeded in refining the argument for this case to yield the desired upper bound 121. However to settle the case  $\mathcal{F} \cap \mathcal{G} =  \emptyset$ eventually we found an argument encompassing both cases.
	
	Finally, let us conclude the paper with another conjecture.
		\begin{conjecture} Suppose that  $\mathcal{F}$ and  $\mathcal{G}$ are cross-intersecting 3-graphs with $\tau(\mathcal{F})=\tau(\mathcal{G})=3$. If $\mathcal{F} \cap \mathcal{G} =  \emptyset$, then $(|\mathcal{F}||\mathcal{G}|)_{\max}=100$.
	\end{conjecture}

Note that there exist examples for which the product of their sizes is 100 . Probably the simplest examples are formed by partitioning the twenty
$3$-subsets of $[6]$.
The twenty $3$-subsets of $[6]$ are naturally partitioned into ten pairs of 
complementary triples $(A_i,B_i)$ . 
Choose five of the $(A_i,B_i)$ to form the first family.
Then form the second family by the remaining five pairs.
These are two disjoint cross-intersecting families, both of size $5\times 2=10$. And in many cases both have covering number three. We also note that it is easy to construct examples with more than 6 vertices from this, such as $\mathcal{F}_1=\{712\}\cup \{345,126\}\cup \{123,456\}\cup \{134,256\} \cup \{135,246\}\cup\{346\}$, $\mathcal{G}_1=\{736\}\cup \{136,245\}\cup \{145,236\}\cup \{146,235\} \cup \{156,234\}\cup\{124\}$ .

	\noindent \textbf{\large{Acknowledgement}}
	
	The authors are indebted to David Avis and Kenichi Kawarabayashi for organizing the WOGGA Workshop at Kyoto University . Discussions at this meeting were the starting point for the research leading to the present paper.

\end{document}